\setlist{nolistsep}
\newtheorem{theorem}{Theorem}[section]
\newtheorem*{conjecture*}{Conjecture}
\newtheorem{corollary}[theorem]{Corollary}
\newtheorem*{theorem*}{Theorem}
\newtheorem*{corollary*}{Corollary}
\newtheorem*{nott*}{Notation}
\newtheorem{proposition}[theorem]{Proposition}
\newtheorem{lemma}[theorem]{Lemma}
\theoremstyle{definition}
\newtheorem{definition}[theorem]{Definition}
\newtheorem{example}[theorem]{Example}
\newtheorem{remark}[theorem]{Remark}
\newtheorem{nott}[theorem]{Notation}
\newtheorem{setup}[theorem]{Setup}
\newcommand{\id}{\text{id}}
\newcommand{\Mor}{\text{Mor}}
\newcommand{\ov}{\overline}
\newcommand{\si}{\Sigma}
\newcommand{\Spec}{\text{Spec }}
\newcommand{\Bl}{\text{Bl}}
\newcommand{\Proj}{\text{Proj}}
\newcommand{\tra}{\text{tr}}
\newcommand{\Frob}{\text{Frob}}
\newcommand{\pr}{\text{pr}}
\newcommand{\tr}{\vartriangle}
\newtheorem*{proposition*}{Proposition}
\newtheorem*{cor*}{Corollary}
\newtheorem*{question*}{Question}
\def\FF{{\mathcal F}}
\def\A{{\mathbb A}}
\def\C{{\mathbb C}}
\def\Q{{\mathbb Q}}
\def\P{{\mathbb P}}
\def\Z{{\mathbb Z}}
\DeclareMathAlphabet\mathbfcal{OMS}{cmsy}{b}{n}
\title{A moduli scheme parametrizing blowups of smooth projective surfaces}
\author{Monica Marinescu}
\date{}
\begin{document}

\begin{abstract}\sloppy
We construct a moduli scheme $F[n]$ that parametrizes tuples $(S_1, S_2, \dots, S_{n+1}, p_1, p_2, \dots, p_n)$ in which $S_1$ is a fixed smooth surface over $\Spec R$ and $S_{i+1}$ is the blowup of $S_i$ at a point $p_i$, $\forall 1\leq i\leq n$. We show that this moduli scheme is smooth and projective. We prove that $F[n]$ has smooth divisors $D_{i,j}^{(n)}$, $\forall 1\leq i<j\leq n$, which correspond to tuples that map $p_j\mapsto p_i$ under the projection morphism $S_j\to S_i$. When $R=k$ is an algebraically closed field, we demonstrate that the Chow ring $\A^*(F[n])$ is generated by these divisors over $\A^*(S_1^n)$. We end by giving a precise description of $\A^*(F[n])$ when $S_1$ is a complex rational surface.
\end{abstract}
\maketitle

%%%%%%%%%%%%%%%%%%%%%%%%%%%%%%%%%%%%%
%======== Cover Pages ===============
%%%%%%%%%%%%%%%%%%%%%%%%%%%%%%%%%%%%%
%\pagenumbering{gobble}
%======== Title page ================
%\pagenumbering{arabic}

%======== Abstract ==================
%\newpage
%\doublespacing
%\begin{center}
%\textbf{\large Abstract} \\
%Moduli of Surfaces and Applications to Curves \\
%Monica Marinescu
%\end{center}

%\medskip

%%%%%%%%%%%%%%%%%%%%%%%%%%%%%%%%%%%%%
%======== Prefatory pages ===========
%%%%%%%%%%%%%%%%%%%%%%%%%%%%%%%%%%%%%
%\newpage 
%\pagenumbering{roman}

%======== Table of Contents =========
%\tableofcontents

%%%%%%%%%%%%%%%%%%%%%%%%%%%%%%%%%%%%%
%======== Main Body =================
%%%%%%%%%%%%%%%%%%%%%%%%%%%%%%%%%%%%%
%\newpage 

\section{Definition of the moduli problem}
%\addcontentsline{toc}{chapter}{Definition of the moduli problem}
\label{functor}

%\qquad 
Let $S_1$ be a fixed smooth projective surface over a ring $R$. Our goal is to construct a space that parametrizes surfaces obtained through a series of $n$ ordered blowups of the base surface $S_1$. More specifically, we focus our attention on the following objects:
\[(S_1,S_2,\dots, S_{n+1}, p_1, p_2,\dots, p_{n}),\]
where $p_{i}\in S_{i}$ and $S_{i+1}$ is the blowup of $S_{i}$ at $p_{i}$, for all $1\leq i\leq n$. We define formally the functor $\FF[S_1,n]$ as follows:

\begin{definition}
Let $S_1$ be a smooth projective surface over a ring $R$ and $n\geq 0$ an integer. Consider the contravariant functor:
\[\FF[n]=\FF[S_1,n]:\it{Sch}(R)\to\it{Sets}\]
defined as follows: 
 
% \begin{itemize}
 
$\bullet$ For any $R$-scheme $B$, an object in $\FF[S_1,n](B)$ is a tower of morphisms:

\[\begin{tikzcd}
\si_{B,n+1}\arrow[r, "\pi_{n+1}"] &\si_{B,n}\arrow[r, "\pi_{n}"] &\si_{B,n-1}\arrow[r, "\pi_{n-1}"] &\dots\arrow[r] &\si_{B,2}\arrow[r, "\pi_{2}"] &\si_{B,1}=B\times S_1\arrow[d, "\pi_{1}"]\\
 &&&&&B,\arrow[u, "p_{1}", shift left=1ex, bend left]\arrow[ul, "p_{2}", bend left]\arrow[ulll, "p_{n-1}", bend left]\arrow[ullll, "p_{n}", bend left]
\end{tikzcd}\]
such that the following conditions are satisfied:
\begin{enumerate}
\item[(1)] $\pi_1=\text{pr}_1$ is the projection onto the first factor;
\item[(2)] for each $1\leq i\leq n$, the morphism $p_{i}:B\to \si_{B,i}$ is a section of the composed map $\si_{B,i}\to B$;
\item[(3)] for each $1\leq i\leq n$, the morphism $\pi_{i+1}:\si_{B,i+1}\to\si_{B,i}$ is the blowup of $\si_{B,i}$ along the locus $p_{i}(B)$.
\end{enumerate}

Shortened notations for such a family are $\si_{B,\leq n+1}\to B$ and $(\si_{B,i},\pi_i,p_i)_{i=1}^n$. Notice that although $\si_{n+1}$ is not included in the latter notation, this scheme is uniquely defined by the data given.

$\bullet$ For every $R$-scheme $B$, $\FF[n](B)$ is the set of all families over $B$, up to isomorphism. An isomorphism between two families is defined as:
$$\Theta=(\theta_i)_{i=1}^n : (\si_{B,i},\pi_i,p_i)_{i=1}^n \xrightarrow{\sim} (\si_{B,i}',\pi_i',p_i')_{i=1}^n,$$
where, $\forall 1\leq i\leq n$, $\theta_i:\si_i\to \si_i'$ is a scheme isomorphism that commutes with the maps of the two families. 

$\bullet$ Let $B_1,B_2$ be $R$-schemes and $f:B_1\to B_2$ an $R$-morphism between the two schemes. There exists a natural contravariant map $\FF(f): \FF[n](B_2)\to \FF[n](B_1)$ that assigns to each family over $B_2$ a corresponding family over $B_1$ by pulling back the schemes $\si_{B_2,i}$ and the sections $p_i$ along $f$. The resulting tower of morphisms over $B_1$ is a valid object in $\FF[n](B_1)$ (this is an immediate application of Lemma~\ref{technical} from Section~\ref{moduli-space}).

$\bullet$ The identity map $\id :B\to B$ on any $R$-scheme corresponds to the identity map on sets $\FF(\id)=\id : \FF[n](B)\to \FF[n](B)$.

$\bullet$ Let $B_1, B_2, B_3$ be $R$-schemes. Let $f:B_1\to B_2$ and $g:B_2\to B_3$ be $R$-morphisms. Then $\FF(g\circ f)=\FF(f)\circ \FF(g)$. This follows from the uniqueness of pullbacks (up to isomorphism).

%\end{itemize}
\end{definition}

\begin{remark}
Let $B$ be a $R$-scheme and $\si_{B,\leq n+1}\rightarrow B$ a family in $\FF[n](B)$. For any point $x\in B$, the fiber over $x$ is a sequence $S_{n+1}\to\dots \to S_1$, where $S_{i+1}$ is the blowup of $S_{i}$ at some point $p_{i}$, $\forall 1\leq i\leq n$. Therefore, this fiber corresponds to a tuple $(S_1,\dots, S_{n+1}, p_1,\dots, p_{n})$ as the ones introduced in the beginning of the section.
\end{remark}

\begin{remark}
For every integer $n\geq 0$, there exists a natural forgetful map $\FF[n+1]\to \FF[n]$ which sends families $\si_{B,\leq n+2}\to B$ in $\FF[n+1](B)$ to families $\si_{B,\leq n+1}\to B$ in $\FF[n](B)$, for all $R$-schemes $B$.
\end{remark}

\begin{remark}\label{nat-trans}
For every integer $n> 0$, there exists a natural transformation of functors $\FF[n]\to S_1^n$ which, for any $R$-scheme $B$, maps a family $(\si_{B,i},\pi_i,p_i)_{i=1}^n$ over $B$ to $(\ov{p_1}, \ov{p_2},\dots, \ov{p_{n}})$, where $\ov{p_i}\in S_1(B)$ is the composition map $B\xrightarrow{p_i} \si_{B,i}\to \dots\to \si_{B,1}=B\times S_1\xrightarrow{\pr_2} S_1$.
\end{remark}

\section{Construction of the moduli scheme}
%\addcontentsline{toc}{chapter}{Construction of the moduli space}
\label{moduli-space}

In this section we prove that the functor $\FF[n]$ has a fine moduli scheme, which we denote by $F[n]$. We show that $F[n]$ and all the schemes in its universal family are smooth and projective over $\Spec R$. 

Before we start working towards our main result, note that we reserve the following notation for the universal family over $F[n]$:
\[\begin{tikzcd}
\si_{n+1,n+1}\arrow[r, "\pi_{n+1,n+1}"] &\si_{n+1,n}\arrow[r, "\pi_{n+1,n}"] &\dots\arrow[r] &\si_{n+1,2}\arrow[r, "\pi_{n+1,2}"] &\si_{n+1,1}=F[n]\times S_1\arrow[d, "\pi_{n+1,1}"]\\
 &&&&F[n],\arrow[u, "\sigma_{n+1,1}", shift left=2ex, bend left]\arrow[ul, "\sigma_{n+1,2}", bend left]\arrow[ulll, "\sigma_{n+1,n}", bend left]
\end{tikzcd}\]

\begin{example}\label{base-case} We start by constructing $F[0]$, $F[1]$ (which represent the functors $\FF[0]$ and $\FF[1]$, respectively), and their universal families. It is easy to see that $F[0]=\Spec R$, since every scheme $B\in Sch(R)$ comes equipped with a structure morphism to $\Spec R$. By definition, $\si_{1,1}=\Spec R\times S_1= S_1$:

\[\begin{tikzcd}[row sep=small, column sep=small]
\si_{B,1}\arrow[dr, phantom, "\ulcorner", very near start]\arrow[d,"\pr_{1}"']\arrow[r]&\si_{1,1}\arrow[d,"\pr_{1}"{name=U}] & & B\times S_1\arrow[dr, phantom, "\ulcorner", very near start]\arrow[r]\arrow[d,"\pr_{1}"'{name=V}]&S_1\arrow[d,"\pr_1"] \\
B\arrow[r]&F[0] & &B\arrow[r] &\Spec R.
\arrow[r, phantom,"\cong"', from=U, to=V]
\end{tikzcd}\]

Next, we show that $F[1]\cong S_1$. Intuitively, every object in this moduli space is a triple $(S_1,S_2, p_1)$ where $S_2=\Bl_{p_1}S_1$, so it is uniquely identified by the point $p_1\in S_1$. More concretely, we need to show that $\FF[1](B)\cong \text{Mor}(B,S_1)$. The equivalence goes as follows: for a family over $B$ like in the figure below, the corresponding map $B\to S_1$ is $f=\pr_2\circ p_1$. Conversely, given a morphism $f:B\to S_1$, we obtain a section $p_1=\id\times f:B\to \si_{B,1}=S_1\times B$ and $\si_{B,2}$ is the blowup of $\si_{B,1}$ along this section:
\[\begin{tikzcd}[row sep=small, column sep=small]
&\si_{B,2}\arrow[d, "\pi_{2}"']\\
S_1&\si_{B,1}=B\times S_1\arrow[d, "\pi_{1}"']\arrow[l, "\pr_2"']\\
&B.\arrow[u, "p_{1}=\id\times f"', shift right=2ex, bend right]\arrow[ul, "f=\pr_2\circ p_1"]
\end{tikzcd}\]

The top scheme in the universal family over $F[1]$ is $\si_{2,2}=\Bl_{\vartriangle}(S_1\times S_1)$. This follows immediately from the figure above, considering the special case where $B=F[1]\cong S_1$ and $f=\id:S_1\to S_1$:
\[\begin{tikzcd}[row sep=small, column sep=small]
\si_{2,2}\arrow[d,"\pi_{2,2}"] & & \Bl_{\tr}(S_1\times S_1)\arrow[d,"\text{bl}_{\tr}"]\\
\si_{2,1}\arrow[d, "\pi_{2,1}"] &\cong &S_1\times S_1\arrow[d,"\pr_1"]\\
F[1]\arrow[u, "p_{2,1}", shift left=2ex, bend left] &  &S_1.\arrow[u, "\tr",  shift left=2ex, bend left]
\end{tikzcd}\]
\end{example}

Before we give the general construction of the moduli scheme corresponding to the functor $\FF[n]$, we state and prove the following lemmas, which will be the key ingredients in our proof:

\begin{lemma}\label{technical}
Let $A, B, \si_{B,1}$ be schemes over $R$. Let $\pi:\si_{B,1}\to B$ be a smooth morphism with a section $\sigma:B\to\si_{B,1}$, and let $\si_{B,2}$ be the blowup of $\si_{B,1}$ along the locus $\sigma(B)$. Given $f:A\to B$ an $R$-morphism, let $\si_{A,1}$ and $ \sigma^*$ be the pullbacks along $f$ of $\si_{B,1}$ and $\sigma$, respectively. The following statements hold:
\begin{enumerate}
\item[(i)] The composed morphism $\si_{B,2}\to B$ is smooth.
\item[(ii)] The blowup of $\si_{A,1}$ along the locus $\sigma^*(A)$, denoted by $\si_{A,2}$, is the pullback of $\si_{B,2}$ along $f$.
\end{enumerate}
\[
\begin{tikzcd}
\si_{A,2}\arrow[dr, phantom, "\ulcorner", very near start]\arrow[r]\arrow[d] &\si_{B,2}\arrow[d]\\
\si_{A,1}\arrow[dr, phantom, "\ulcorner", very near start]\arrow[r]\arrow[d,"\pi^*"']&\si_{B,1}\arrow[d, "\pi"]\\
A\arrow[u, "\sigma^*", shift left=1.5ex, bend left]\arrow[r, "f"] &B\arrow[u, "\sigma"', shift right=1ex, bend right]
\end{tikzcd}\]
\end{lemma}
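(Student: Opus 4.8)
The plan is to deduce both statements from the local structure of the center $\sigma(B)$, exploiting the fact that a section of a smooth morphism is a regular immersion. Since $\pi:\si_{B,1}\to B$ is smooth and $\sigma$ is a section, $\sigma$ is a closed regular immersion whose conormal sheaf, via the isomorphism $\sigma(B)\cong B$, is $\sigma^*\Omega_{\si_{B,1}/B}$, locally free of rank equal to the local relative dimension $d$. The two facts I will lean on are: (a) blowing up commutes with flat base change; and (b) for a regular immersion the blowup is computed from the Rees algebra, whose associated graded is the symmetric algebra on the conormal bundle, so its formation commutes with any base change that keeps the pullback of the center a regular immersion of the same codimension.

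For (i), I would first reduce to a local model. Near any point of $\sigma(B)$, choose relative coordinates: pick $u_1,\dots,u_d\in\OO_{\si_{B,1}}$ with $du_1,\dots,du_d$ a basis of $\Omega_{\si_{B,1}/B}$, and after subtracting their pullbacks along $\sigma$ arrange that they vanish on $\sigma(B)$. These define an \'etale morphism onto $\A^d_B$ carrying $\sigma(B)$ to the zero section and its ideal to $(u_1,\dots,u_d)$. Since the blowup of $\A^d_B$ along the zero section is $B\times\Bl_0\A^d$, which is smooth over $B$, and since blowing up commutes with the \'etale (hence flat) base change just constructed, $\si_{B,2}$ is \'etale-locally smooth over $B$; away from $\sigma(B)$ it is isomorphic to $\si_{B,1}$, which is smooth over $B$. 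As smoothness is \'etale-local on the source, (i) follows.

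For (ii), the key observation is that $\sigma^*$ is again a section, now of the smooth morphism $\pi^*:\si_{A,1}\to A$ obtained by base change, so $\sigma^*(A)$ is a regular immersion in $\si_{A,1}$ of the same codimension $d$, with conormal bundle the pullback of that of $\sigma(B)$. Because the center is regularly embedded and its pullback stays regularly embedded of the same codimension, the formation of the Rees algebra commutes with the base change $g:\si_{A,1}\to\si_{B,1}$; concretely $\II_{\sigma^*(A)}^m=g^*\II_{\sigma(B)}^m$ for all $m$, with no excess terms. Hence $\si_{A,2}=\Bl_{\sigma^*(A)}\si_{A,1}\cong\si_{A,1}\times_{\si_{B,1}}\si_{B,2}$. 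Finally, by associativity of fiber products together with $\si_{A,1}=A\times_B\si_{B,1}$,
\[
\si_{A,1}\times_{\si_{B,1}}\si_{B,2}\cong\bigl(A\times_B\si_{B,1}\bigr)\times_{\si_{B,1}}\si_{B,2}\cong A\times_B\si_{B,2},
\]
which exhibits $\si_{A,2}$ as the pullback of $\si_{B,2}$ along $f$ and fills in the upper square of the diagram in the statement.

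I expect the main obstacle to be justifying that the blowup commutes with the base change in (ii): blowups do not commute with arbitrary base change, since one must distinguish total and strict transforms, and excess intersection can create spurious components. The entire argument hinges on the center being a section of a smooth morphism, which guarantees that it is regularly embedded and that this property, together with its codimension, is preserved after pulling back along $f$. Making the identification $\II_{\sigma^*(A)}^m=g^*\II_{\sigma(B)}^m$ precise (equivalently, the vanishing of the relevant $\mathrm{Tor}$ sheaves, or the compatibility of the conormal bundles under base change) is the technical heart of the proof, and the step I would write out most carefully.
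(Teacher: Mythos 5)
Your proposal is correct, but it reaches the two conclusions by a route that differs from the paper's in part (i) and repackages part (ii). For (i), the paper never passes to an \'etale local model: it works affine-locally with the Rees algebra $T'\oplus I\oplus I^2\oplus\dots$ and verifies smoothness of $\si_{B,2}\to B$ by checking the three ingredients separately --- local finite presentation (citing regularity of $I$), flatness (an induction showing each $T'/I^n$ is $T$-flat using the exact sequences $0\to I^{n-1}/I^n\to T'/I^n\to T'/I^{n-1}\to 0$ and local freeness of $I^{n-1}/I^n$), and smooth fibers (by invoking part (ii) to identify each fiber as the blowup of a smooth variety at a point). Your reduction to the model $\Bl_0\A^d_B\cong B\times\Bl_0\A^d$ via relative coordinates $u_1,\dots,u_d$ is a clean geometric alternative that delivers smoothness of the total space in one stroke, at the cost of the routine shrinking needed to ensure $V(u_1,\dots,u_d)$ coincides with $\sigma(B)$ on the chosen neighborhood. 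For (ii), you and the paper both rest on the same fact --- the Rees algebra of the section's ideal commutes with the base change --- but the paper proves it concretely by tensoring $0\to I^n\to T'\to T'/I^n\to 0$ with $S$ over $T$ and using the $T$-flatness of $T'/I^n$ already established in (i), whereas you invoke the linear-type property of regularly embedded centers whose pullback remains regularly embedded of the same codimension. That invocation is a true and standard statement, so there is no gap, but note that the identification $\II_{\sigma^*(A)}^m=g^*\II_{\sigma(B)}^m$ which you defer as ``the step to write out most carefully'' is exactly the content of the paper's tensoring argument; the paper's organization, in which the flatness computation of (i) directly supplies the exactness needed in (ii), is the more economical way to discharge it.
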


\begin{proof}
(i) We show that the morphism $\si_{B,2}\to B$ is smooth by proving that it is flat, locally of finite presentation, and has smooth fibers (see~\cite{stacks}, \href{https://stacks.math.columbia.edu/tag/02K5}{Tag 02K5}).
Affine locally, on the level of rings, we are given a smooth morphism $\pi: T\to T'$ and $\sigma:T'\twoheadrightarrow T$ a section of $\pi$. Let $I=\ker(\sigma)\subset T'$. The blowup of $\Spec T'$ along $I$ is $\Bl_I(\Spec T')=\Proj(T'\oplus I\oplus I^2\oplus \dots)$.

The map $\sigma$ is a section of the smooth morphism $\pi$, which means that $I$ is a regular ideal. Since $I$ is regular, then $\Bl_I(\Spec T')$ is of finite presentation over $\Spec T$ (see~\cite{stacks}, \href{https://stacks.math.columbia.edu/tag/0BIQ}{Tag 0BIQ}).

To show that $\si_{B,2}\to B$ is flat, it is enough to prove that $(T'\oplus I\oplus I^2\oplus\dots)$ is flat over $T$. We know $T'$ is flat over $T$ because $T\to T'$ is smooth. We show inductively that $I^n$ is flat over $T$. By the following short exact sequence, it is enough to prove that $T'/I^n$ is $T$-flat:
\[0\to I^{n}\to T'\to T'/I^{n}\to 0.\]

When $n=1$, $T'/I\cong T$, so the claim is true. For the inductive step, consider another short exact sequence:
\[0\to I^{n-1}/I^n\to T'/I^n\to T'/I^{n-1}\to 0.\]

Since $I$ is a regular ideal, then $I^{n-1}/I^n$ is a locally free finite $T$-module, hence it is flat. By the induction hypothesis, $T'/I^{n-1}$ is flat over $T$. Putting these two facts together, we conclude that $T'/I^n$ is flat over $T$, completing the induction step.

Lastly, we show that the morphism $\si_{B,2}\to B$ has smooth fibers. By part (ii) below, the fiber over every point $x\in B$ is:
\[\begin{tikzcd}
V'=\Bl_{x}V\arrow[r] & V\arrow[r] & \Spec k(x)\arrow[l, twoheadrightarrow, bend left,"x"],
\end{tikzcd}\]
where $k(x)$ is the residue field of $x$. The scheme $V$ is smooth over $k(x)$, hence $V'=\Bl_xV$ is also smooth over $k(x)$. With this, we conclude that the morphism $\si_{B,2}\to B$ is smooth.

\item[(ii)]

%Notice that since $\pi$ is smooth, then $\pi^*$ is also smooth. Moreover, since $\sigma,\sigma^*$ are sections of smooth morphisms, it follows that they are both regular embeddings.

We work affine locally, on the level of rings, where we have the following figure:
\[
\begin{tikzcd}
S'=S\arrow[d,twoheadrightarrow, "\sigma_S"', shift right=1ex, bend right]\otimes_TT'&T'\arrow[l]\arrow[d,twoheadrightarrow,  "\sigma", shift left=1ex, bend left]\\
S\arrow[u,"\pi_S"']&T\arrow[l]\arrow[u, "\pi"]
\end{tikzcd}\]

Let $\pi_S$ and $\sigma_S$ be the pullbacks of $\pi$ and $\sigma$, respectively. Let $I_S=\ker(\sigma_S)\subset S'$. The blowup of $\Spec S'$ along $I_S$ is $\Bl_{I_S}(\Spec S')=\Proj(S'\oplus I_S\oplus I_S^2\oplus\dots)$. Our claim boils down to showing that $I_S^n=I^n\otimes_{T'}S'=I^n\otimes_TS$. This fact follows from the figure below. Note that $T'/I^n$ is flat over $T$, so tensoring the top short exact sequence with $S$ preserves the exactness of the resulting sequence:
\[\begin{tikzcd}
0\arrow[r]&I^n\otimes_T S\arrow[r]\arrow[d]&T'\otimes_TS\arrow[r]\arrow[d,"\cong"]&T'/I^n\otimes_TS\arrow[r]\arrow[d,"\cong"]&0\\
0\arrow[r]&I_S^n\arrow[r]&S'\arrow[r]&S'/I_S^n\arrow[r]&0.
\end{tikzcd}\]\qedhere
\end{proof}

\begin{lemma}\label{technical2}
Let $A, B, C$ be $R$-schemes. Let $A\xrightarrow{h}C$ be an $R$-morphism that factors as $A\xrightarrow{f}B\xrightarrow{g}C$. Given any $R$-morphism $V_C\xrightarrow{v_C}C$, let $V_B\xrightarrow{v_B}B$ be its pullback along $g$, and $V_A\xrightarrow{v_A}A$ be its pullback along $h$. There exists a unique map $f':V_A\to V_B$ which makes the top triangle commutative and the left square cartesian.
\[\begin{tikzcd}[row sep=small, column sep=small]
V_A\arrow[dd,"v_A"]\arrow[rd, dashed, "\exists ! f'"]\arrow[rr,"h'"] & &V_C\arrow[dd,"v_C"]\\
& V_B\arrow[]{ru}[name=N]{g'}\arrow[dd, "v_B"' near start] & \\
A\arrow[rr, "\hspace{30pt}h"]\arrow[rd, "f"] & &C\\
& B\arrow[ru, "g"] &
\end{tikzcd}\]
\end{lemma}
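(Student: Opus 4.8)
The plan is to construct $f'$ directly from the universal property of the fiber product and then deduce that the left square is cartesian from the standard pasting (cancellation) lemma. By hypothesis $V_B = V_C\times_C B$ and $V_A = V_C\times_C A$, with $g':V_B\to V_C$ and $h':V_A\to V_C$ the projections onto $V_C$. Thus the right-hand square (on $V_B, V_C, B, C$) and the outer rectangle (on $V_A, V_C, A, C$) are cartesian by construction, and these two facts are the only inputs I will need.

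First I would build the map $f'$. The fiber product $V_B=V_C\times_C B$ has the universal property that a morphism into it is the same datum as a pair of morphisms into $V_C$ and into $B$ that agree after composing into $C$. I feed it the pair $h':V_A\to V_C$ and $f\circ v_A:V_A\to A\to B$. These agree in $C$: on one hand $v_C\circ h' = h\circ v_A$, which is the defining relation of $V_A$ as the pullback of $v_C$ along $h$; on the other hand $g\circ(f\circ v_A) = (g\circ f)\circ v_A = h\circ v_A$, using the factorization $h=g\circ f$. Hence there is a unique morphism $f':V_A\to V_B$ with $g'\circ f' = h'$ and $v_B\circ f' = f\circ v_A$. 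The first identity is exactly commutativity of the top triangle, and the second is exactly commutativity of the left square; uniqueness of $f'$ is immediate from the universal property, yielding the ``$\exists!$'' in the statement.

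It then remains to verify that the left square is cartesian. Here I would invoke the pasting lemma for pullbacks: given two side-by-side squares whose right square is cartesian, the left square is cartesian if and only if the outer rectangle is cartesian. Since the right square is cartesian (definition of $V_B$) and the outer rectangle is cartesian (definition of $V_A$), the left square on $V_A, V_B, A, B$ is cartesian, as desired. Equivalently, one can see this by the cancellation identity $(V_C\times_C B)\times_B A\cong V_C\times_C A$, which identifies $V_A$ with the pullback $V_B\times_B A$.

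I do not anticipate any genuine obstacle: the statement is a repackaging of the functoriality and transitivity of fiber products. The only points demanding care are bookkeeping ones, namely verifying the compatibility $v_C\circ h' = g\circ(f\circ v_A)$ via the factorization $h=g\circ f$ before applying the universal property, and then applying the pasting lemma in the correct direction, i.e.\ using that the right square together with the outer rectangle being cartesian forces the left square to be cartesian.
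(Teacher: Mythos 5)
Your proposal is correct and rests on the same underlying fact as the paper's proof, namely the transitivity (cancellation) of fiber products $(V_C\times_C B)\times_B A\cong V_C\times_C A$; the paper simply defines $V_A':=V_B\times_B A$ and identifies it with $V_A$ by uniqueness of pullbacks, which is exactly the ``equivalently'' remark at the end of your argument. Your more detailed route through the universal property and the pasting lemma is a fine, if slightly longer, presentation of the same idea.
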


\begin{proof}
Let $V_A'=V_B\times_B A$ with $f':V_A'\to V_B$ the associated map. Since $V_B=V_C\times_C B$ and $h=g \circ f$, then $V_A'=V_C\times_C A$. By the uniqueness of pullbacks, we conclude that $V_A=V_A'$ and $f'$ is the unique map in question.\qedhere

\end{proof}

\begin{theorem}\label{main2}
The functor $\FF[n]$ has a fine moduli scheme $F[n]$. Moreover, the following is true:
\begin{enumerate}
\item[(a)] For every $n\geq 0$, $F[n]$ and all the schemes $\si_{n+1,1}$, $\dots$, $\si_{n+1,n+1}$ in its universal family are smooth and projective over $\Spec R$;
\item[(b)] For every $n\geq 1$, the top scheme $\si_{n+1,n+1}$ in the universal family over $F[n]$ can be identified as:
\[\si_{n+1,n+1}\cong \text{Bl}_{\vartriangle}(F[n]\times_{F[n-1]}F[n]),\]
where the cartesian product is induced by the forgetful map $F[n]\to F[n-1]$;
\item[(c)] For every $n\geq 0$, the top scheme $\si_{n+1,n+1}$ in the universal family of $F[n]$ is the moduli scheme representing the functor $\FF[n+1]$. Under this identification, the map 
\[\Pi_{n+1}=\pi_{n+1,1}\circ\dots\circ\pi_{n+1,n+1}:\si_{n+1,n+1}\cong F[n+1]\to F[n]\]
corresponds to the forgetful functor $\FF[n+1]\to \FF[n]$. 
\end{enumerate}
\end{theorem}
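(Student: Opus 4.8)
The plan is to argue by induction on $n$, exploiting the recursive structure already visible in Example~\ref{base-case}: the scheme representing $\FF[n+1]$ should be exactly the top scheme $\si_{n+1,n+1}$ of the universal family over $F[n]$. The base cases $n=0,1$, together with the identifications $\si_{1,1}=S_1$ and $\si_{2,2}=\Bl_{\tr}(S_1\times S_1)$, are supplied by Example~\ref{base-case}. For the inductive step I assume that $\FF[n]$ is represented by a smooth projective $F[n]$ whose universal family $(\si_{n+1,i},\pi_{n+1,i},\sigma_{n+1,i})$ consists of schemes smooth and projective over $\Spec R$, and I set $F[n+1]:=\si_{n+1,n+1}$, writing $\Pi:=\Pi_{n+1}=\pi_{n+1,1}\circ\dots\circ\pi_{n+1,n+1}:\si_{n+1,n+1}\to F[n]$ for its structure morphism.

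The heart of the argument is a natural bijection $\FF[n+1](B)\cong\Mor_R(B,\si_{n+1,n+1})$ for every $R$-scheme $B$. Given a family $(\si_{B,i},\pi_i,p_i)_{i=1}^{n+1}$, I first discard the last section $p_{n+1}$ (and the top blowup), obtaining a family in $\FF[n](B)$; by the inductive hypothesis this is classified by a unique $g:B\to F[n]$, and Lemma~\ref{technical}(ii) guarantees that the pulled-back tower really is a tower of blowups, so that $\si_{B,n+1}\cong B\times_{g,F[n]}\si_{n+1,n+1}$. The remaining datum $p_{n+1}$ is then a section of $B\times_{F[n]}\si_{n+1,n+1}\to B$, which by the universal property of the fiber product is the same as a morphism $\tilde g:B\to\si_{n+1,n+1}$ with $\Pi\circ\tilde g=g$; I send the family to $\tilde g$. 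Conversely, from $\tilde g$ I set $g=\Pi\circ\tilde g$, pull back the universal family over $F[n]$ along $g$, take $p_{n+1}=(\id_B,\tilde g)$, and form the final blowup $\si_{B,n+2}=\Bl_{p_{n+1}(B)}\si_{B,n+1}$. One checks these assignments are mutually inverse and descend to isomorphism classes; naturality in $B$ is exactly the content of Lemma~\ref{technical2}, which ensures the construction commutes with base change along composites. This establishes representability with $F[n+1]=\si_{n+1,n+1}$.

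Specializing $B=F[n+1]$ and $\tilde g=\id$ exhibits the universal family over $F[n+1]$ as the pullback of the universal family over $F[n]$ along $\Pi$, together with a final blowup: here $g=\Pi$, so $\si_{n+2,n+1}=F[n+1]\times_{F[n]}\si_{n+1,n+1}=F[n+1]\times_{F[n]}F[n+1]$, the tautological section $p_{n+1}=(\id,\id)$ is the diagonal $\tr$, and hence $\si_{n+2,n+2}=\Bl_{\tr}(F[n+1]\times_{F[n]}F[n+1])$, which is part (b). For part (a), $F[n+1]=\si_{n+1,n+1}$ is smooth and projective over $\Spec R$ by the inductive hypothesis applied to the universal family over $F[n]$; each remaining structure morphism $\si_{n+2,i}\to F[n+1]$ is a composite of blowups followed by the projection $F[n+1]\times S_1\to F[n+1]$, which is smooth by iterating Lemma~\ref{technical}(i) and projective since blowups and the product projection are projective, so composing with the smooth projective $F[n+1]\to\Spec R$ gives smoothness and projectivity over $\Spec R$. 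Finally, for part (c), forgetting the last section of a family in $\FF[n+1]$ sends its classifying map $\tilde g$ to $g=\Pi\circ\tilde g$; thus the forgetful natural transformation $\FF[n+1]\to\FF[n]$ corresponds under the identification $F[n+1]=\si_{n+1,n+1}$ to post-composition with $\Pi_{n+1}$, as claimed.

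I expect the main obstacle to lie in the second paragraph: verifying that the bijection $\FF[n+1](B)\cong\Mor_R(B,\si_{n+1,n+1})$ is genuinely well defined on isomorphism classes and strictly natural in $B$. The delicate point is that the inductive identification $\si_{B,n+1}\cong g^*\si_{n+1,n+1}$ is only canonical up to the isomorphisms furnished by the moduli property, so one must track these isomorphisms through the fiber-product description of $p_{n+1}$ and check compatibility with an arbitrary base change $f:B'\to B$. This is precisely where Lemma~\ref{technical2} does the bookkeeping, and assembling these compatibilities carefully is the crux of the proof.
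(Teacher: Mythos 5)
Your proposal is correct and follows essentially the same route as the paper: induction on $n$, identifying the representing scheme of $\FF[n+1]$ with the top scheme of the universal family over $F[n]$, using Lemma~\ref{technical} to see that pulled-back towers are blowup towers and Lemma~\ref{technical2} for the base-change bookkeeping, and recovering part (b) by specializing to the identity map. Your reformulation of the correspondence --- a section of $B\times_{F[n]}\si_{n+1,n+1}\to B$ is the same as a lift $\tilde g$ of $g$ along $\Pi_{n+1}$ --- is exactly the paper's assignment $f_W=\pr_2\circ p_{n+1}$ and its inverse, so the two arguments coincide in substance.
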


\begin{proof}

We prove these statements inductively over $n$. The base case is covered in Example~\ref{base-case}. For the inductive step, assume that the moduli scheme $F[k]$ exists, for all $k< n$. Assume $F[k], \si_{k+1,1},\dots, \si_{k+1,k+1}$ are all smooth and projective. Here are the steps in our proof:

%In  we constructed the moduli scheme $F[0]\cong \Spec R$ with its universal family $\si_{1,1}\cong S_1$, and $F[1]\cong S_1$. The map $\Pi_1: \si_{1,1}\to F[0]$ is the structure morphism $S_1\to \Spec R$, so it trivially corresponds to the forgetful functor $\FF[1]\to \FF[0]$. Hence the statements (a) and (c) are true for $n=0$.

\begin{enumerate}
\item[(i)] Let $W=\si_{n,n}$ be the top scheme in the universal family over $F[n-1]$. We construct a family over $W$ in $\FF[n](W)$, then show that $W$ is the fine moduli scheme corresponding to the functor $\FF[n]$ and that the family we just defined over $W$ is the universal family;
\item[(ii)] We show that $F[n], \si_{n+1,1},\dots,\si_{n+1,n+1}$ are smooth and projective, and that $\si_{n+1,n+1}$ is isomorphic to $\Bl_{\tr}(F[n]\times_{F[n-1]}F[n])$;
\item[(iii)] We prove that the map $\Pi_{n}:\si_{n,n}\cong F[n]\to F[n-1]$ corresponds to the forgetful functor $\FF[n]\to \FF[n-1]$.
\end{enumerate}

The first step is to construct the family over $W$ in $\FF[n](W)$. Let $\Pi_{n}=\pi_{n,1}\circ\dots\circ\pi_{n,n}:W=\si_{n,n}\to F[n-1]$ be the composed morphism and $\si_{W,\leq n}\to W$ be the pullback of the universal family $\si_{n,\leq n}\to F[n-1]$ along this map $\Pi_n$ (see figure below). In particular, notice that $\si_{W,n}\cong W\times_{F[n-1]}W$. Let $p_{W,n}:=\tr:W\to \si_{W,n}=W\times_{F[n-1]}W$ be the diagonal embedding and $\si_{W, n+1}=\Bl_{\tr}(W\times_{F[n-1]}W)$:

\[
\begin{tikzcd}[row sep=small, column sep=small]
\si_{W,n+1}=\Bl_{\tr}(W\times_{F[n-1]}W)\arrow[d] &\\
\si_{W,n}=W\times_{F[n-1]}W\arrow[dr, phantom, "\ulcorner", very near start]\arrow[r]\arrow[d] &\si_{n,n}=W\arrow[d]\arrow[dd,  "\Pi_{n}", shift left=3ex, bend left]\\
\si_{W,\leq n-1}\arrow[dr, phantom, "\ulcorner", near start]\arrow[r]\arrow[d]&\si_{n,\leq n-1}\arrow[d]\\
W\arrow[uu, "p_{W,n}=\tr", shift left=3ex, bend left]\arrow[r, "\Pi_{n}"] &F[n-1].
\end{tikzcd}
\]

The resulting tower of morphisms $\si_{W,\leq n+1}\to W$ is indeed a family in $\FF[n](W)$, as a result of Lemma~\ref{technical}. We claim that $W$ is the fine moduli space for $\FF[n]$, and that the family constructed above is the universal family over $F[n]$. We prove this statement by using this family over $W$ to build the correspondence $\FF[n](B)\cong \Mor(B,W)$, for any $R$-scheme $B$.

We start by constructing a functor map:
\[C_1:\FF[n]\to \Mor(-,W).\]

Let $B$ be an $R$-scheme and $\si_{B,\leq n+1}\to B$ a family in $\FF[n](B)$. We need to associate a morphism $B\to W$ to this family. The truncated family $\si_{B,\leq n}\to B$ is an element of $\FF[n-1](B)$, so it corresponds uniquely to a morphism $f_{n-1}:B\to F[n-1]$ that gives the figure below. Now, recall that the family $\si_{B,\leq n+1}\to B$ comes equipped with a section $p_n:B\to \si_n$, so we obtain the desired map $f_W:B\to W$ by composing $B\xrightarrow{p_{n}}\si_{B,n}\xrightarrow{\pr_2} W$:

\[
\begin{tikzcd}[row sep=small, column sep=small]
\si_{B,n}\arrow[dr, phantom, "\ulcorner", very near start]\arrow[r, "\pr_2"]\arrow[d] &\si_{n,n}=W\arrow[d]\\
\si_{B,\leq n-1}\arrow[dr, phantom, "\ulcorner", very near start]\arrow[r]\arrow[d]&\si_{n,\leq n-1}\arrow[d]\\
B\arrow[uu, "p_{n}", shift left=3ex, bend left]\arrow[r,"f_{n-1}"] &F[n-1].
\end{tikzcd}
\]

Second, we construct a functor map:
\[C_2:\Mor(-,W)\to \FF[n].\]

Let $B$ be an $R$-scheme and $f:B\to W$ an $R$-morphism. We obtain a corresponding family in $\FF[n](B)$ by pulling back the family $\si_{W,\leq n+1}\to W$ along $f$. The resulting tower of morphisms is indeed a family in $\FF[n](B)$, as a result of Lemma~\ref{technical}:
\[
\begin{tikzcd}[row sep=small, column sep=small]
\si_{B,\leq n+1}\arrow[dr, phantom, "\ulcorner", very near start]\arrow[r]\arrow[d] &\si_{W,\leq n+1}\arrow[d]\\
B\arrow[u, "p^*_{W,\bullet}", shift left=1ex, bend left]\arrow[r] &W.\arrow[u, "p_{W,\bullet}"', shift right=1ex, bend right]
\end{tikzcd}
\]

Now we want to show that the maps $C_1$ and $C_2$ are inverses of each other. Say we start with a morphism $f:B\to W$. We construct a family over $B$ by pulling back $\si_{W,\leq n+1}\to W$ along $f$, then use this family to obtain a corresponding morphism $f_W:B\to W$:

\[
\begin{tikzcd}[row sep=small, column sep=small]
\si_{B,n+1}\arrow[dr, phantom, "\ulcorner", very near start]\arrow[r]\arrow[d] &\si_{W,n+1}\arrow[d]\\
\si_{B,n}\arrow[dr, phantom, "\ulcorner", very near start]\arrow[r]\arrow[d] &\si_{W,n}\arrow[dr, phantom, "\ulcorner", very near start]\arrow[d]\arrow[r] &\si_{n,n}\cong W\arrow[d]\\
\si_{B,\leq n-1}\arrow[dr, phantom, "\ulcorner", very near start]\arrow[r]\arrow[d]&\si_{W,\leq n-1}\arrow[dr, phantom, "\ulcorner", very near start]\arrow[d]\arrow[r] &\si_{n,\leq n-1}\arrow[d]\\
B\arrow[uu, "p_{n}", shift left=3ex, bend left]\arrow[r, "f"] &W\arrow[r] &F[n-1].
\end{tikzcd}
\]

We prove that $f=f_W$, which concludes that $C_1\circ C_2=\id$:
\begin{align*}
[B\xrightarrow{f_W}W] &= [B\xrightarrow{p_{n}}\si_{B,n}\to\si_{W,n}\to \si_{n,n}]\\
&= [B\xrightarrow{f} W\xrightarrow{p_{W,n}}\si_{W,n}\to\si_{n,n}]\\
&= [B\xrightarrow{f} W\xrightarrow{\tr}W\times_{F[n-1]}W\xrightarrow{\pr_2}W]\\
&=[B\xrightarrow{f} W].
\end{align*}

Say we start with a family over $B$ in $\FF[n](B)$, denoted as $\si_{B,\leq n+1}\to B$. As before, we obtain a corresponding morphism $f_W:B\to W$. We want to show that if we pull back $\si_{W, \leq n+1}\to W$ along $f_W$, we recover the same family we started with. Since the truncated family $\si_{B,\leq n}\to B$ is an object in $\FF[n-1](B)$, it corresponds uniquely to a morphism $f: B\to F[n-1]$ which gives this figure:

%\[
%\begin{tikzcd}
%\si_{B,\leq n+1}\arrow[dr, phantom, "\ulcorner", very near start]\arrow[r]\arrow[d] &\si_{W,\leq n+1}\arrow[d]\\
%B\arrow[u, "p_{\leq n}", shift left=1ex, bend left]\arrow[r,"f_W"] &W.\arrow[u, "p_{W,\leq n}"', shift right=1ex, bend right]
%\end{tikzcd}
%\]

\[
\begin{tikzcd}[row sep=small, column sep=small]
\si_{B,n+1}\arrow[d]\\
\si_{B,n}\arrow[dr, phantom, "\ulcorner", very near start]\arrow[r, "\pr_2"]\arrow[d] &\si_{n, n}\arrow[d]\\
\si_{B,\leq n-1}\arrow[dr, phantom, "\ulcorner", very near start]\arrow[r]\arrow[d] &\si_{n,\leq n-1}\arrow[d]\\
B\arrow[r,"f"]\arrow[uu, "p_n", shift left=2ex, bend left] &F[n-1].
\end{tikzcd}
\]

We first claim that the map $B\xrightarrow{f} F[n-1]$ factors as $B\xrightarrow{f_W}W\xrightarrow{\Pi_n}F[n-1]$. This is true because of the following equivalence of maps:
\begin{align*}
[B\xrightarrow{f} F[n-1]] &= [B\xrightarrow{\id}B\xrightarrow{f} F[n-1]]\\
&= [B\xrightarrow{p_{n}}\si_{B,n}\to B\xrightarrow{f} F[n-1]]\\
&= [B\xrightarrow{p_{n}}\si_{B,n}\xrightarrow{\pr_2}\si_{n,n}\xrightarrow{\Pi_n} F[n-1]]\\
&= [B\xrightarrow{f_W} W\xrightarrow{\Pi_n} F[n-1]].
\end{align*}

Since the morphism $B\xrightarrow{f} F[n-1]$ factors as $B\xrightarrow{f_W}W\xrightarrow{\Pi_n}F[n-1]$, we can use Lemma~\ref{technical2} repeatedly, from the bottom up, to recover the maps $\si_i\to\si_{W,i}$, for all $i\leq n$, which make every rectangle in the figure below cartesian, and every triangle commutative:
 \[\begin{tikzcd}[row sep=small, column sep=small]
\si_{B,n+1}\arrow[dd] & &\\
& \si_{W,n+1}\arrow[dd]& \\
\si_{B,\leq n}\arrow[rr]\arrow[dd]\arrow[rd] & & \si_{n,\leq n}\arrow[dd]\\
& \si_{W,\leq n}\arrow[dd]\arrow[ru]& \\
B\arrow[rr, "\hspace{30pt}f"]\arrow[rd, "f_W"] & & F[n-1]\\
& W\arrow[ru, "\Pi_n"] &
\end{tikzcd}
\]

Lastly, we need to show that the section $p_{n}:B\to \si_{B,n}$ is the pullback along $f_W$ of the diagonal embedding $p_{W,n}=\tr:W\to\si_{W,n}=W\times_{F[n-1]}W$. This follows from the figure below: by definition, $f_W$ is the composition $B\xrightarrow{p_n}\si_{B,n}\to \si_{W,n}=W\times_{F[n-1]}W\to \si_{n,n}=W$, so all maps in the figure commute as expected, and $p_{n}$ is indeed the pullback of $p_{W,n}=\tr$. By Lemma~\ref{technical}, we obtain that $\si_{B,n+1}$ is the pullback of $\si_{W,n+1}$, and the proof of (i) is complete:

\[\begin{tikzcd}[row sep=small, column sep=small]
B\arrow[rd, "p_{n}"]\arrow[rdd, "\id", bend right]\arrow[rrd, "f_W\times f_W", bend left]\arrow[rrrd, "f_W", bend left] & & & \\
&\si_{B,n}\arrow[r]\arrow[d] &W\times_{F[n-1]}W\arrow[r]\arrow[d] &W\arrow[d]\\
&B\arrow[r,"f_W"] &W\arrow[u, "\tr", bend left]\arrow[r] &F[n-1].
\end{tikzcd}
\]

For part (ii) we show inductively that $F[n]$ and all the schemes in its universal family are smooth and projective over $\Spec R$. As the base case, recall that $F[0]\cong \Spec R$ and $\si_{1,1}\cong S_1$ have this property. Inductively, assume that $F[n-1]$ and all the schemes in its universal family are smooth and projective over $\Spec R$. From the arguments above, $F[n]$ can be identified with the top scheme $\si_{n,n}$ over $F[n-1]$, hence it is also smooth and projective. The first scheme in the universal family over $F[n]$ is, by definition, $\si_{n+1,1}\cong F[n]\times S_1$. By construction, all the maps $\si_{n+1,i}\to F[n]$ are smooth (this is an application of Lemma~\ref{technical}), which means the corresponding sections $\sigma_{n+1,i}:F[n]\to \si_{n+1,i}$ are regular embeddings. Going up in the tower of morphisms, we can conclude step by step that $\si_{n+1,2}, \dots, \si_{n+1,n+1}$ are smooth and projective, since each of them is obtained by blowing up a smooth projective scheme along a smooth projective subscheme. This concludes claim (ii).
  
Lastly, we need to show that $\Pi_{n}:\si_{n,n}\cong F[n]\to F[n-1]$ corresponds to the forgetful functor $\FF[n]\to \FF[n-1]$. For any $R$-scheme $B$ and any morphism $f:B\to F[n]$, let $\si_{B,\leq n+1}\to B$ be its corresponding family in $\FF[n](B)$. The map $\Pi_{n}\circ f: B\to F[n-1]$ corresponds to the truncated family $\si_{B,\leq n}\to B$ in $\FF[n-1](B)$, concluding part (iii):

\[
\begin{tikzcd}
%[row sep=small, column sep=small]
\si_{n+1}\arrow[dr, phantom, "\ulcorner", very near start]\arrow[r]\arrow[d] &\si_{n+1,n+1}\arrow[d]\\
\si_{\leq n}\arrow[dr, phantom, "\ulcorner", very near start]\arrow[r]\arrow[d] &\si_{n+1,\leq n}\arrow[dr, phantom, "\ulcorner", very near start]\arrow[d]\arrow[r] &\si_{n,\leq n}\arrow[d]\\
B\arrow[r, "f"] &F[n]\arrow[r,"\Pi_{n}"] &F[n-1].
\end{tikzcd}
\]
\end{proof}

\begin{remark} As a consequence of the construction in the proof of Theorem~\ref{main2}, we obtain the following ascending ladder, where each square is cartesian:

\[\begin{tikzcd}
\dots\arrow[r]& \si_{5,5}\arrow[d,"\pi_{5,5}"']\\
\dots\arrow[r]& \si_{5,4}\arrow[dr, phantom, "\ulcorner", very near start]\arrow[r]\arrow[d, "\pi_{5,4}"'] &\si_{4,4}\arrow[d,"\pi_{4,4}"']\\
\dots\arrow[r]& \si_{5,3}\arrow[dr, phantom, "\ulcorner", very near start]\arrow[r]\arrow[d," \pi_{5,3}"'] &\si_{4,3}\arrow[dr, phantom, "\ulcorner", very near start]\arrow[r]\arrow[d, "\pi_{4,3}"'] &\si_{3,3}\arrow[d, "\pi_{3,3}"']\\
\dots\arrow[r]& \si_{5,2}\arrow[dr, phantom, "\ulcorner", very near start]\arrow[r]\arrow[d, "\pi_{5,2}"'] &\si_{4,2}\arrow[dr, phantom, "\ulcorner", very near start]\arrow[r]\arrow[d, "\pi_{4,2}"'] &\si_{3,2}\arrow[dr, phantom, "\ulcorner", very near start]\arrow[r]\arrow[d, "\pi_{3,2}"'] &\si_{2,2}\arrow[d, "\pi_{2,2}"']\\
\dots\arrow[r]&\si_{5,1}\arrow[dr, phantom, "\ulcorner", very near start]\arrow[r]\arrow[d, "\pi_{5,1}"']&\si_{4,1}\arrow[dr, phantom, "\ulcorner", very near start]\arrow[r]\arrow[d,"\pi_{4,1}"']&\si_{3,1}\arrow[dr, phantom, "\ulcorner", very near start]\arrow[r]\arrow[d, "\pi_{3,1}"']&\si_{2,1}\arrow[dr, phantom, "\ulcorner", very near start]\arrow[r]\arrow[d, "\pi_{2,1}"']&\si_{1,1}\arrow[d, "\pi_{1,1}"']\\
\dots\arrow[r]&F[4]\arrow[r,"\Pi_{4}"'] &F[3]\arrow[r, "\Pi_{3}"']&F[2]\arrow[r, "\Pi_{2}"']&F[1]\arrow[r, "\Pi_{1}"']&F[0].
\end{tikzcd}\]

In particular, since $\si_{n,n}\cong F[n]$, for all $n\geq 1$, we obtain the following identification:
\begin{align}\label{si-n-i}
\si_{n+1,i}\cong F[n]\times_{F[i-1]}F[i], \forall 1\leq i\leq n+1.
\end{align}
\end{remark}

In light of Equation~\ref{si-n-i}, we look back to the universal family over $F[n]$ and give another description of the projection maps $\pi_{n+1,*}$ and the sections $\sigma_{n+1,*}$. Before we do so, we need to establish some notation:

\begin{nott}\label{k-points}
Let $B$ be an $R$-scheme. A family $(\si_{B,i}, \pi_i, p_i)_{i=1}^n$ in $F[n](B)$ will simply be denoted as $(p_1,\dots, p_n)$. Similarly, a $B$-point of $\si_{n+1,i}\cong F[n]\times_{F[i-1]} F[i]$ will be denoted by $(p_1,\dots, p_n; p_i')$, with the understanding that $(p_1,\dots, p_n)$ is the corresponding point in $F[n]$ and $(p_1,\dots, p_{i-1}, p_i')$ is the one in $F[i]$.
\end{nott}

%$$\begin{tikzcd}
%F[n+1]=F[n]\times_{F[n]}F[n+1]\arrow[d, "\pi_{n+1,n+1}"']\\
%F[n]\times_{F[n-1]}F[n]\arrow[d, "\pi_{n+1,n}"']\\
%F[n]\times_{F[n-2]}F[n-1]\arrow[d, "\pi_{n+1,n-1}"']\\
%\dots\arrow{d}\\
%F[n]\times_{F[2]}F[3]\arrow[d, "\pi_{n+1,3}"']\\
%F[n]\times_{F[1]}F[2]\arrow[d, "\pi_{n+1,2}"']\\
%F[n]\times_{F[0]}F[1]\arrow[d, "\pi_{n+1,1}"']\\
%F[n]. \arrow[u, "\sigma_{n+1,1}"', shift right=5.5ex, bend right]\arrow[uu, "\sigma_{n+1,2}"', shift right=7ex, bend right]\arrow[uuu, "\sigma_{n+1,3}"', shift right=8.5ex, bend right]\arrow[uuuuu, "\sigma_{n+1,n-1}"', shift right=10ex, bend right]\arrow[uuuuuu, "\sigma_{n+1,n}"', shift right=11.5ex, bend right]
%\end{tikzcd}$$

\begin{proposition}\label{beh-maps}
Let $B$ be a $R$-scheme and $(\si_{n+1,i},\pi_{n+1,i},\sigma_{n+1,i})_{i=1}^{n}$ the universal family over the moduli scheme $F[n]$. Using Notation~\ref{k-points} above, the morphisms $\pi_{n+1,*}$ and $\sigma_{n+1,*}$ map $B$-points as follows:
\begin{enumerate}
\item[(a)] $\sigma_{n+1,i}:F[n](B)\to \si_{n+1,i}(B)\text{ maps }(p_1,\dots, p_{n})\mapsto (p_1,\dots, p_{n};p_i)$;
\item[(b)] $\pi_{n+1,i}:\si_{n+1,i}(B)\to \si_{n+1,i-1}(B)\text{ maps }(p_{1},\dots, p_{n};p'_{i})\mapsto (p_{1},\dots, p_{n}; \overline{p'_{i}})$, where $\overline{p'_{i}}:B\to \si_{B,i-1}$ is the image of $p'_{i}$ under the projection map $\si_{B,i}\to \si_{B,i-1}$. 
\end{enumerate}
\end{proposition}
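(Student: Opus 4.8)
The plan is to prove both statements by evaluating on $B$-points in the Yoneda spirit, exploiting the identification $\si_{n+1,i}\cong F[n]\times_{F[i-1]}F[i]$ of Equation~\ref{si-n-i} together with the defining property of the fine moduli scheme: the universal family over $F[n]$ pulls back, along any classifying morphism, to the family it classifies. Once the right dictionary is set up, the computation for both parts is essentially forced, so the substantive work lies in translating ``a $B$-point of $\si_{n+1,i}$'' into ``a section of the family'' in a way compatible with Notation~\ref{k-points}.

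First I would establish this dictionary. Fix a family $(p_1,\dots,p_n)\in F[n](B)$, i.e.\ a classifying map $\phi\colon B\to F[n]$, with corresponding family $(\si_{B,i},\pi_i,p_i)$ over $B$. Since the structure morphism $\si_{n+1,i}\to F[n]$ is the first projection of $F[n]\times_{F[i-1]}F[i]$, pulling it back along $\phi$ yields $\si_{B,i}=B\times_{F[n]}\si_{n+1,i}\cong B\times_{F[i-1]}F[i]$, the $i$-th scheme of the family. By the universal property of the fibre product, a $B$-point $x\colon B\to\si_{n+1,i}$ lying over $\phi$ is the same datum as a section $B\to\si_{B,i}$ of $\si_{B,i}\to B$; unwinding $\si_{B,i}\cong B\times_{F[i-1]}F[i]$, this section is exactly the $F[i]$-component of $x$, namely the point $p_i'$ in the notation $(p_1,\dots,p_n;p_i')$. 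This translation---a $B$-point over $(p_1,\dots,p_n)$ is a section $p_i'\colon B\to\si_{B,i}$---is what I will use throughout.

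The second ingredient is the compatibility of the universal maps with this pullback. Because $F[n]$ is a fine moduli scheme (Theorem~\ref{main2}), pulling back the universal family along $\phi$ recovers $(\si_{B,i},\pi_i,p_i)$; in particular the universal section $\sigma_{n+1,i}$ pulls back to $p_i$, and the blowup map $\pi_{n+1,i}\colon\si_{n+1,i}\to\si_{n+1,i-1}$ pulls back to the blowdown $\pi_i\colon\si_{B,i}\to\si_{B,i-1}$. This last point is where Lemma~\ref{technical}(ii) is needed: since $\pi_{n+1,i}$ is the blowup of $\si_{n+1,i-1}$ along $\sigma_{n+1,i-1}(F[n])$ and $\sigma_{n+1,i-1}$ pulls back to $p_{i-1}$, the lemma guarantees that its pullback is the blowup of $\si_{B,i-1}$ along $p_{i-1}(B)$, which is precisely $\pi_i$. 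I expect this pullback-compatibility of the blowup map to be the main obstacle; the fibrewise picture (on each fibre $\pi_{n+1,i}$ restricts to the blowdown $S_i\to S_{i-1}$) makes the formula plausible, but only Lemma~\ref{technical}(ii) upgrades it to the scheme-theoretic statement valid for arbitrary $B$.

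With these in place the two parts follow immediately. For (a), $\sigma_{n+1,i}\circ\phi$ lies over $(p_1,\dots,p_n)$ because it is a section of the structure map, and its associated section is the pullback of $\sigma_{n+1,i}$ along $\phi$, which is $p_i$; hence $\sigma_{n+1,i}$ sends $(p_1,\dots,p_n)\mapsto(p_1,\dots,p_n;p_i)$. For (b), take $x=(p_1,\dots,p_n;p_i')$ with associated section $p_i'\colon B\to\si_{B,i}$. Since $\pi_{n+1,i}$ is a morphism over $F[n]$, the image $\pi_{n+1,i}\circ x$ again lies over $(p_1,\dots,p_n)$, and a short chase of the two cartesian squares shows its associated section is obtained by applying the pullback of $\pi_{n+1,i}$, namely $\pi_i$, to $p_i'$; this gives $\pi_i\circ p_i'=\overline{p_i'}$. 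Therefore $\pi_{n+1,i}$ sends $(p_1,\dots,p_n;p_i')\mapsto(p_1,\dots,p_n;\overline{p_i'})$, as claimed. The argument is uniform in $i$, so no special treatment of the top blowup $\pi_{n+1,n+1}$ is required.
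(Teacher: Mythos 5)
Your proof is correct, and it rests on the same two pillars as the paper's own argument: the identification $\si_{n+1,i}\cong F[n]\times_{F[i-1]}F[i]$ and the fine-moduli pullback property (with Lemma~\ref{technical} guaranteeing that the universal sections and blowup maps pull back to the sections and blowup maps of the classified family). The organization differs, however. The paper proves part (b) first for the top map $\pi_{n+1,n+1}$, by an explicit chain of equalities of composites in the large pullback diagram (identifying $\pr_1\circ\pi_{n+1,n+1}$ with the forgetful map $\Pi_{n+1}$ and computing $\pr_2\circ\pi_{n+1,n+1}$ by hand), and then reduces the general $\pi_{n+1,i}$ to the top case via the cartesian squares of the ascending ladder. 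You instead set up a single dictionary --- a $B$-point of $\si_{n+1,i}$ lying over a classifying map $\phi:B\to F[n]$ is the same as a section of $\si_{B,i}=B\times_{F[n]}\si_{n+1,i}\to B$, namely the $p_i'$ of Notation~\ref{k-points} --- and observe that $\pi_{n+1,i}$ pulls back to $\pi_i$, so all $i$, including $i=n+1$, are handled uniformly. Your route is cleaner and avoids the two-step reduction; the paper's explicit computation has the side benefit of exhibiting $\pr_1\circ\pi_{n+1,n+1}$ as the forgetful map, a fact it cites from Theorem~\ref{main2} and reuses. One point worth making explicit in your write-up: the translation of the $F[i]$-component of a point of $F[n]\times_{F[i-1]}F[i]$ into the section $p_i'$ of $\si_{B,i}\to B$ is precisely the $C_1/C_2$ correspondence from the proof of Theorem~\ref{main2} applied with $W=\si_{i,i}\cong F[i]$; that is what ties your dictionary to Notation~\ref{k-points} and makes the final formulas more than a tautology.
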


\begin{proof}
(a) Let $f:F[n]\to F[i]$ be the projection morphism which maps $B$-points $(p_1,\dots, p_{n})\mapsto (p_1,\dots, p_i)$. We have the following diagram:
\[\begin{tikzcd}[row sep=small, column sep=small]
F[n]\arrow[rd, "\sigma_{n+1,i}"]\arrow[rdd, "\id", bend right]\arrow[rrd, "\tr\circ f", bend left] \\
&\si_{n+1,i}=F[n]\times_{F[i-1]}F[i]\arrow[r, "f\times \id"]\arrow[d, "\pr_1"] &\si_{i+1,i}=F[i]\times_{F[i-1]}F[i]\arrow[d,  "\pr_1"]\\
&F[n]\arrow[r,"f"] &F[i],\arrow[u, "\tr", bend left]
\end{tikzcd}\]
where the section $\sigma_{n+1,i}:F[n]\to\si_{n+1,i}$ is defined to be the unique morphism making the two triangles of the diagram commute. It is therefore clear that $\sigma_{n+1,i}$ maps a $B$-point $(p_1,\dots,p_{n})$ of $F[n]$ to $(p_1,\dots,p_{n};p_i)\in \si_{n+1,i}(B)$.

(b) We start by showing that the morphism $\pi_{n+1,n+1}: \si_{n+1,n+1}\to\si_{n+1,n}$ maps $B$-points $(p_1,\dots, p_{n},p_{n+1})\mapsto (p_1,\dots,p_{n};\ov{p_{n+1}})$, where $\ov{p_{n+1}}$ is the image of $p_{n+1}$ under the projection map $\si_{B,n+1}\to \si_{B,n}$. Since $\si_{n+1,n}=F[n]\times_{F[n-1]}F[n]$, it is enough to show that:
\begin{align*}
\pr_1\circ \pi_{n+1,n+1}:(p_1,\dots,p_{n+1})&\mapsto (p_1,\dots, p_{n-1}, p_n),\\
\pr_2\circ \pi_{n+1,n+1}:(p_1,\dots,p_{n+1})&\mapsto (p_1,\dots, p_{n-1}, \ov{p_{n+1}}).
\end{align*}

The map $\pr_1\circ\pi_{n+1,n+1}=\Pi_{n+1}:\si_{n+1,n+1}\to F[n]$ corresponds to the forgetful functor $\FF[n+1]\to \FF[n]$ (see Theorem~\ref{main2}), so the first claim above is true. To show the second identity, let 
$f_{n+1}:B\to F[n+1]$ be such a point and $f_{n}=\Pi_{n+1}\circ f_{n+1}:B\to F[n]$ be the `truncated` point. The maps $f_{n+1}$ and $f_n$ produce the following diagram, where $p_{n+1}$ is the pullback of $f_{n+1}$, and $p_1,\dots, p_{n}$ are the pullbacks of $\sigma_{n+1,1},\dots, \sigma_{n+1,n}$, respectively:

\[
\begin{tikzcd}[row sep=small, column sep=small]
B\arrow[dr, "p_{n+1}"]\arrow[drr, "f_{n+1}", bend left]\arrow[ddr, "p_{n}", bend right]\arrow[ddddr, "p_1", bend right]\arrow[dddddr, "\id"', bend right]& & \\
&\si_{B,n+1}\arrow[r]\arrow[d] &\si_{n+1,n+1}\cong F[n+1]\arrow[d, "\pi_{n+1,n+1}"]\\
&\si_{B,n}\arrow[r]\arrow[d] &\si_{n+1,n}\arrow[d]\arrow[r, "\pr_2"] &\si_{n,n}\cong F[n]\arrow[d]\\
&\dots\arrow[d]&\dots\arrow[d]&\dots\arrow[d]\\
&\si_{B,1}\arrow[r]\arrow[d]&\si_{n+1,1}\arrow[d]\arrow[r]&\si_{n,1}\arrow[d]\\
&B \arrow[r, "f_{n}"] &F[n]\arrow[r]\arrow[u, "\sigma_{n+1,1}"', shift right=2ex, bend right]\arrow[uuu, "\sigma_{n+1,n}"', shift right=3.1ex, bend right]&F[n-1].
\end{tikzcd}
\]

Notice the following maps are equivalent:
\begin{align*}
[B\xrightarrow{f_{n+1}} \si_{n+1,n+1}\xrightarrow{\pi_{n+1,n+1}}\si_{n+1,n}&\xrightarrow{\pr_2} \si_{n,n}]=\\
 &= [B\xrightarrow{p_{n+1}} \si_{B,n+1}\to\si_{n+1,n+1}\to \si_{n+1,n}\to\si_{n,n}]\\
 &= [B\xrightarrow{p_{n+1}} \si_{B,n+1}\to \si_{n}\to \si_{n+1,n}\to\si_{n,n}]\\
 &= [B\xrightarrow{\ov{p_{n+1}}} \si_{B,n}\to \si_{n+1,n}\to\si_{n,n}].
 \end{align*}
 
By the equivalence above, $\pr_2\circ \pi_{n+1,n+1}$ maps a $B$-point $(p_1,\dots, p_{n+1})$ in $\si_{n+1,n+1}$ to $(p_1,\dots, p_{n-1},\ov{p_{n+1}})$ in $\si_{n,n}$, and this concludes the proof of our initial statement.

The behavior of the more general map $\pi_{n+1,i}$ becomes clear from the following cartesian square, in which the bottom horizontal maps is induced by the forgetful map $F[n]\to F[i-1]$:
\[
\begin{tikzcd}
\si_{n+1,i}\cong F[n]\times_{F[i-1]}F[i]\arrow[r]\arrow[d,"\pi_{n+1,i}"]&\si_{i,i}\cong F[i]\arrow[d, "\pi_{i,i}"]\\
\si_{n,i-1}\cong F[n]\times_{F[i-2]}F[i-1]\arrow[r] &\si_{i,i-1}\cong F[i-1]\times_{F[i-2]}F[i-1].
\end{tikzcd}\]

%The top horizontal map $\si_{n,i}\to \si_{i,i}$ is the projection onto the second factor. The morphism $\si_{n,i-1}\to\si_{i,i-1}$ acts as the forgetful map $F[n]\to F[i-1]$ onto the first factor, and as the identity on the second factor.  We know the behavior of $\pi_{i,i}$ from the previous paragraph, hence we conclude that $\pi_{n+1,i}$ maps $B$-points $(p_1,\dots, p_{n};p'_i)\mapsto (p_1,\dots,p_{n};\ov{p_i})$, where $\ov{p'_i}$ is the projection of $p'_i$ under the map $\si'_{B,i}\to \si_{B,i-1}$.\qedhere

\end{proof}

\begin{nott}
As a result of Proposition~\ref{beh-maps}, we change notation and denote the section $\sigma_{n+1,i}$ as $\tr_{i,n+1}$. We will use this notation throughout the rest of the paper.
 \end{nott}

\section{Divisors of the moduli scheme}
\label{divisors}

\begin{definition}
The moduli scheme $F[n]$ comes equipped with divisors $D^{(n)}_{i,j}, \forall 1\leq i<j\leq n$, which arise naturally from the construction outlined in the previous section. We start by defining the divisors $D^{(n)}_{1,n},\dots, D^{(n)}_{n-1,n}$. To do so, recall that for all $1\leq i\leq n-1$, $\si_{n,i+1}$ is obtained by blowing up the previous variety $\si_{n,i}$ along the locus $\tr_{i,n}(F[n-1])$. We define $D^{(n)}_{i,n}$ on $\si_{n,i+1}$ to be the exceptional divisor of this blowup:
\[\begin{tikzcd}
D_{i,n}^{(n)}\subset \Bl_{\tr_{i,n}}\si_{n,i}=\si_{n,i+1}\arrow[r] & \si_{n,i}\arrow[r] & F[n-1]\arrow[l, bend left,"\tr_{i,n}"].
\end{tikzcd}\]

By abuse of notation, the divisor $D^{(n)}_{i,n}$ on $F[n]=\si_{n,n}$ is the strict transform of the exceptional divisor coming from $\si_{n,i+1}$ in the tower of blowups $\si_{n,n}\to\si_{n,n-1}\to\dots\to\si_{n,i+1}$. 

We define the other divisors inductively as follows: given $D^{(n-1)}_{i,j}$ on $F[n-1]$, where $j\leq n-1$, let $D^{(n)}_{i,j}=D^{(n-1)}_{i,j}\times S_1$ be the divisor on $\si_{n,1}=F[n-1]\times S_1$. By abuse of notation, $D^{(n)}_{i,j}\subset F[n]=\si_{n,n}$ is defined as the strict transform of this divisor in the tower of blowups $\si_{n,n}\to\si_{n,n-1}\to\dots\to\si_{n,1}$.
\end{definition}

\begin{proposition}
For every $n\geq 1$, the natural projection map $F[n]\to S_1^n$ is a composition of blowups. Under this map, every divisor $D^{(n)}_{i,j}$ is mapped surjectively onto the diagonal $\tr_{i,j}$.
\end{proposition}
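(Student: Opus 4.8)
The plan is to prove both assertions together by induction on $n$, using the explicit tower of blowups supplied by the universal family. The base case $n=1$ is immediate: $F[1]\cong S_1\to S_1^1$ is the identity and there are no divisors $D^{(1)}_{i,j}$.

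For the first assertion, recall $F[n]\cong\si_{n,n}$ and that the universal family over $F[n-1]$ furnishes a tower
\[
F[n]=\si_{n,n}\xrightarrow{\pi_{n,n}}\si_{n,n-1}\to\cdots\to\si_{n,2}\xrightarrow{\pi_{n,2}}\si_{n,1}=F[n-1]\times S_1,
\]
in which every arrow is the blowup of a section locus (of codimension two). By the inductive hypothesis $F[n-1]\to S_1^{n-1}$ is a composition of blowups; since blowing up commutes with flat base change and $S_1$ is flat over $R$, applying the functor $(-)\times S_1$ turns each blowup $\Bl_Z X$ into $\Bl_{Z\times S_1}(X\times S_1)$, so $F[n-1]\times S_1\to S_1^{n-1}\times S_1=S_1^n$ is again a composition of blowups. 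Concatenating the two towers exhibits $F[n]\to S_1^n$ as a composition of blowups. I would then verify, using the description of the maps $\pi_{n,*}$ on $B$-points in Proposition~\ref{beh-maps}, that this composite sends $(p_1,\dots,p_n)\mapsto(\ov{p_1},\dots,\ov{p_n})$ and hence coincides with the natural projection of Remark~\ref{nat-trans}.

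For the second assertion I again induct, relying on two standard facts: the exceptional divisor of a blowup surjects onto its center, and the strict transform of a divisor $D$ under a blowup surjects onto $D$ provided no component of $D$ lies in the center. Because every center appearing in our towers is a section, hence of codimension two, no divisor is ever absorbed into a center, so each strict transform that occurs surjects onto the divisor it came from; as all maps are projective, images are closed, and these surjectivity statements let me replace the image of a strict transform by the image of the divisor (or center) it dominates. I then split into the two cases matching the definition of $D^{(n)}_{i,j}$. If $j<n$, then $D^{(n)}_{i,j}$ is the strict transform of $D^{(n-1)}_{i,j}\times S_1\subset F[n-1]\times S_1$ along $\si_{n,n}\to\cdots\to\si_{n,1}$, so its image in $S_1^n$ equals the image of $D^{(n-1)}_{i,j}\times S_1$, which by induction (together with surjectivity of $F[n-1]\to S_1^{n-1}$) is the partial diagonal $\{x_i=x_j\}$, i.e.\ $\tr_{i,j}$. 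If $j=n$, then $D^{(n)}_{i,n}$ is the strict transform of the exceptional divisor $E$ of $\si_{n,i+1}=\Bl_{\tr_{i,n}}\si_{n,i}\to\si_{n,i}$; its image in $S_1^n$ equals the image of the center $\tr_{i,n}(F[n-1])\cong F[n-1]$, and tracing this section through $\si_{n,i}\to\si_{n,1}\to S_1^n$ via Proposition~\ref{beh-maps} yields $\{(\ov{p_1},\dots,\ov{p_{n-1}},\ov{p_i})\}$, which — since $F[n-1]\to S_1^{n-1}$ is surjective — is exactly $\tr_{i,n}$.

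The main obstacle is the bookkeeping of strict transforms: one must check that at each successive blowup defining $D^{(n)}_{i,j}$ on $F[n]$ the current divisor is not swallowed into the center, and that the resulting image in $S_1^n$ is computed through the correct factorization of the projection. Both points are controlled by the codimension-two nature of the centers combined with the point-level description of Proposition~\ref{beh-maps}; once these are secured, surjectivity onto $\tr_{i,j}$ follows from properness.
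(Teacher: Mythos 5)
Your proposal is correct and follows essentially the same route as the paper: induct on $n$, use the tower $\si_{n,n}\to\cdots\to\si_{n,1}=F[n-1]\times S_1$ concatenated with $(F[n-1]\to S_1^{n-1})\times S_1$ for the blowup decomposition, and for the divisors split into the cases $j<n$ (reduce to $D^{(n-1)}_{i,j}\times S_1$) and $j=n$ (push the exceptional divisor onto its center $\tr_{i,n}(F[n-1])$ and then onto the diagonal), with Proposition~\ref{beh-maps} supplying the point-level identification. Your added justification that strict transforms are never swallowed by the codimension-two centers is a detail the paper defers to Proposition~\ref{full}, but it is the same underlying argument.
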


\begin{proof}
In the previous section, we constructed a tower of blowups:
\[F[n]=\si_{n,n}\xrightarrow{\pi_{n,n}} \si_{n,n-1}\xrightarrow{\pi_{n,n-1}}\dots\to\si_{n,2}\xrightarrow{\pi_{n,2}}\si_{n,1}=F[n-1]\times S_1.\]

Inductively, using that $F[0]\cong \Spec R$, we obtain a morphism $F[n]\to S_1^n$ that decomposes as a series of blowups. Given the behavior of the maps $\pi_{n,i}$, outlined in Proposition~\ref{beh-maps}, this map coincides with the natural projection morphism.

We show inductively on $n$ that the projection morphism maps surjectively any divisor $D^{(n)}_{i,j}$ onto the diagonal $\tr_{i,j}$. As the base case, recall that $F[2]$ is isomorphic to $Bl_{\tr_{1,2}}(S_1\times S_1)$, and $D^{(2)}_{1,2}$ is the exceptional divisor of this blowup. For the induction step, consider a divisor $D^{(n)}_{i,j}$ of the moduli space $F[n]$. If $j<n$, then, by construction, the morphism $\si_{n,n}\to\si_{n,1}=F[n-1]\times S_1$ maps $D_{i,j}^{(n)}\mapsto D_{i,j}^{(n-1)}\times S_1$, so the conclusion follows. If $j=n$, the divisor $D^{(n)}_{i,n}$ on $F[n]=\si_{n,n}$ is the strict transform of the exceptional divisor:
\[\begin{tikzcd}
D_{i,n}^{(n)}\subset \Bl_{\tr_{i,n}}\si_{n,i}=\si_{n,i+1}\arrow[r] & \si_{n,i}\arrow[r] & F[n-1]\arrow[l, bend left,"\tr_{i,n}"].
\end{tikzcd}\]

Given the behavior of the maps $\pi_{n,i}$, outlined in Proposition~\ref{beh-maps}, the projection morphism $F[n]=\si_{n,n}\to \si_{n,i}\to S_1^n$ maps $D^{(n)}_{i,n}\subset \si_{n,n}$ onto the blowup locus $\tr_{i,n}(F[n-1])\subset \si_{n,i}$, which is then mapped onto the diagonal $\tr_{i,n}\subset S_1^n$, and the proof is complete.
\end{proof}

\begin{proposition}\label{full}
%\begin{enumerate}
%\item[(a)] 
(a) Let $1\leq i<j<n$, $1\leq k\leq n$. The divisor $D_{i,j}^{(n)}\subset \si_{n,k}$ is the inverse image of $D_{i,j}^{(n-1)}\subset F[n-1]$ under the projection map $\si_{n,k}\to F[n-1]$.

%\item[(b)] 
(b) Let $1\leq i<k\leq n$. The divisor $D_{i,n}^{(n)}\subset \si_{n,k}$ is the inverse image of the exceptional divisor $D_{i,n}^{(n)}\subset \si_{n,i+1}$ under the projection map $\si_{n,k}\to\si_{n,i+1}$.
%\end{enumerate}
\end{proposition}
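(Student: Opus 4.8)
The plan is to deduce both parts from a single elementary fact about blowups, applied repeatedly along the towers that define the divisors. Recall that if $\pi\colon\Bl_Z X\to X$ is the blowup of a smooth scheme $X$ along a regularly embedded center $Z$ and $D\subset X$ is an effective Cartier divisor, then $\pi^{*}D=\widetilde D+\operatorname{mult}_Z(D)\cdot E$, where $\widetilde D$ is the strict transform and $E$ the exceptional divisor. In particular, whenever $Z$ is not contained in $D$ (so that $\operatorname{mult}_Z(D)=0$ at the generic point of each component of $Z$), the strict transform agrees with the total transform $\pi^{-1}(D)=\pi^{*}D$. By the proof of Theorem~\ref{main2} all the ambient schemes $\si_{n,k}$ are smooth and all the blowup centers $\tr_{m,n}(F[n-1])$ are regular embeddings, so this fact applies at each stage. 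The proposition then reduces to checking, at every blowup in the relevant tower, that the center is \emph{not} contained in the divisor being transformed; this is exactly what lets us replace ``strict transform'' in the definition of $D^{(n)}_{i,j}$ by ``inverse image''.

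For part (a), fix $i<j<n$ and write $q_k\colon\si_{n,k}\to F[n-1]$ for the structural projection $\si_{n,k}\to\si_{n,k-1}\to\cdots\to\si_{n,1}\to F[n-1]$. On $\si_{n,1}=F[n-1]\times S_1$ the divisor $D^{(n)}_{i,j}=D^{(n-1)}_{i,j}\times S_1$ is by definition $q_1^{-1}(D^{(n-1)}_{i,j})$, which is the base case. For the inductive step, the blowup $\pi_{n,m+1}\colon\si_{n,m+1}\to\si_{n,m}$ has center $\tr_{m,n}(F[n-1])$, the image of the $m$-th section. Since $q_m$ restricts to an isomorphism of this section onto $F[n-1]$, its intersection with $q_m^{-1}(D^{(n-1)}_{i,j})$ is isomorphic to the proper closed subset $D^{(n-1)}_{i,j}\subsetneq F[n-1]$, so the center is not contained in the inductively identified divisor $D^{(n)}_{i,j}=q_m^{-1}(D^{(n-1)}_{i,j})$. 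Hence strict transform equals total transform, and because $q_{m+1}=q_m\circ\pi_{n,m+1}$ we obtain $D^{(n)}_{i,j}=q_{m+1}^{-1}(D^{(n-1)}_{i,j})$ on $\si_{n,m+1}$. Iterating up to $k$ proves (a).

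For part (b), fix $i<k$, write $E=D^{(n)}_{i,n}\subset\si_{n,i+1}$ for the exceptional divisor of $\si_{n,i+1}=\Bl_{\tr_{i,n}}\si_{n,i}$, and let $r_m\colon\si_{n,m}\to\si_{n,i+1}$ be the projection for $m\ge i+1$. The case $m=i+1$ is trivial since $r_{i+1}=\id$. For the step $\pi_{n,m+1}\colon\si_{n,m+1}\to\si_{n,m}$ the center is again $\tr_{m,n}(F[n-1])$, now with $m\ge i+1>i$, and the point to verify is that it is not contained in $r_m^{-1}(E)$. By Proposition~\ref{beh-maps}, the composite $r_m\circ\tr_{m,n}\colon F[n-1]\to\si_{n,i+1}$ is the section sending $(p_1,\dots,p_{n-1})$ to $(p_1,\dots,p_{n-1};\ov{p_m})$, where $\ov{p_m}$ is the image of the $m$-th marked point in the $(i+1)$-st blowup. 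Over the generic point of $F[n-1]$ the marked point $p_m$ is a generic point of the fibre $S_m$, so its image $\ov{p_m}$ is a generic point of $S_{i+1}$ and therefore avoids the one-dimensional exceptional curve $E_i$; thus the center is not contained in $r_m^{-1}(E)$. Once more strict transform equals total transform, and since $r_{m+1}=r_m\circ\pi_{n,m+1}$ we get $D^{(n)}_{i,n}=r_{m+1}^{-1}(E)$ on $\si_{n,m+1}$. Iterating proves (b).

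The main obstacle is precisely the non-containment check in part (b): unlike in (a), the divisor $E$ dominates the base $F[n-1]$, so containment cannot be ruled out by a dimension count on the base, and one must instead exploit the freedom of the higher marked points. The cleanest way to make this rigorous over an arbitrary ring $R$ is to verify non-containment fibrewise — the blowup and its exceptional divisor are formed compatibly with base change by Lemma~\ref{technical}, so it suffices to exhibit, in some geometric fibre, a tower whose $m$-th point projects off $E_i$; since $F[n-1]$ is irreducible and $r_m^{-1}(E)$ is closed, one such point forces non-containment. A secondary point worth stating explicitly is that the whole argument is the assertion $\operatorname{mult}_Z(D)=0$ at each stage, i.e.\ that passing to strict transforms introduces no spurious exceptional component, which is exactly what reconciles the inductively defined $D^{(n)}_{i,j}$ with a genuine inverse image.
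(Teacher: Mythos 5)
Your proof is correct and follows essentially the same route as the paper's: both reduce each part to the claim that strict transform equals total transform at every stage of the tower, which in turn reduces to checking that the blowup center $\tr_{m,n}(F[n-1])$ is not contained in the divisor being transformed, verified in (a) by comparing the surjection of the center onto $F[n-1]$ with the surjection of the divisor onto the proper subvariety $D_{i,j}^{(n-1)}$, and in (b) by exhibiting tuples whose $m$-th marked point does not lie over $p_i$. The one loose phrase is ``$p_m$ is a generic point of the fibre $S_m$'' (it is a rational point of the generic fibre, not its generic point), but your closing remark --- that a single geometric point off $E_i$ together with irreducibility of $F[n-1]$ forces non-containment --- already supplies the correct rigorous version, which is precisely the paper's functor-of-points check.
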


\begin{proof}
%\begin{enumerate}
%\item[(a)]
(a) When $k=1$, we defined the divisor on $\si_{n,1}=F[n-1]\times S_1$ to be $D_{i,j}^{(n)}=D_{i,j}^{(n-1)}\times S_1$, so the conclusion holds. When $k>1$, recall that we have the following figure:
\[\begin{tikzcd}
\si_{n,k+1}=\Bl_{\tr_{k,n}}\si_{n,k}\arrow[r,"\pi_{n,k+1}"] & \si_{n,k}\arrow[r] & F[n-1]\arrow[l, bend left,"\tr_{k,n}"].
%D_{i,j}^{(n)}\subset\si_{n,k+1}\arrow[d]\\
%D_{i,j}^{(n)}\subset \si_{n,k}\arrow[d]\\
%D_{i,j}^{(n-1)}\subset F[n-1],\arrow[u, "\tr_{k,n}"', shift right=3ex, bend right]
\end{tikzcd}\]

The divisor $D_{i,j}^{(n)}\subset \si_{n,k+1}$ is the strict transform of the divisor with the same name coming from $\si_{n,k}$. We claim that the strict transform coincides with the total transform. By construction, the moduli space $F[n-1]$ is irreducible and the map $\tr_{k,n}$ is a regular embedding, so the exceptional divisor $D_{k,n}^{(n)}$ of the blowup $\si_{n,k+1}\to \si_{n,k}$ is also irreducible. Hence, the only way statement (a) could fail is if the exceptional divisor $D_{k,n}^{(n)}\subset\si_{n,k+1}$ is completely contained in the inverse image $\pi_{n,k+1}^{-1}(D_{i,j}^{(n)})$, which could happen only if the blowup locus $\tr_{k,n}(F[n-1])\subset \si_{n,k}$ was completely contained in $D_{i,j}^{(n)}$. However, the projection $\si_{n,k}\to F[n-1]$ maps $\tr_{k,n}(F[n-1])\twoheadrightarrow F[n-1]$ and $D_{i,j}^{(n)}\twoheadrightarrow D_{i,j}^{(n-1)}$, so the induction step is complete and claim (a) is correct.

%\item[(b)] 
(b) When $k>i+1$, we have a similar picture as before:
\[\begin{tikzcd}
\si_{n,k+1}=\Bl_{\tr_{k,n}}\si_{n,k}\arrow[r,"\pi_{n,k+1}"] & \si_{n,k}\arrow[r] & F[n-1]\arrow[l, bend left,"\tr_{k,n}"].
%D_{i,j}^{(n)}\subset\si_{n,k+1}\arrow[d]\\
%D_{i,j}^{(n)}\subset \si_{n,k}\arrow[d]\\
%D_{i,j}^{(n-1)}\subset F[n-1],\arrow[u, "\tr_{k,n}"', shift right=3ex, bend right]
\end{tikzcd}\]

The divisor $D_{i,n}^{(n)}\subset \si_{n,k+1}$ is defined to be the strict transform of the divisor with the same name coming from $\si_{n,k}$. Using the same argument as in (a), it is enough to prove that the blowup locus $\tr_{k,n}(F[n-1])$ is not fully contained in $D_{i,n}^{(n)}$. We prove this inductively over $k=i+1,\dots, n$. 

When $k=i+1$, a $B$-point of $D_{i,n}^{(n)}\subset \si_{n,i+1}$ is of the form $(p_1,\dots, p_{n-1}; p'_{i+1})$, where $p'_{i+1}\in \si_{i+1}$ maps to $p_i$ under the projection map $\si_{B,i+1}\to \si_{B,i}$, and a $B$-point of $\tr_{i+1,n}(F[n-1])\subset \si_{n,i+1}$ is of the form $(p_1,\dots,p_{n-1};p_{i+1})$. Clearly $\tr_{i+1,n}(F[n-1])$ is not fully contained in $D_{i,n}^{(n)}$.

In the induction step, we know that $D_{i,n}^{(n)}\subset \si_{n,k}$ is the inverse image of the exceptional divisor of the blowup $\si_{n,i+1}\to \si_{n,i}$, hence a $B$-point of $D_{i,n}^{(n)}\subset \si_{n,k}$ can be summarized as $(p_1,\dots, p_{n-1};p_{k}')$, where $p_{k}'\in \si_{B,k}$ maps to $p_{i}$ under the projection map $\si_{B,k}\to \si_{B,i}$. We conclude $\tr_{k,n}(F[n-1])\not\subset D_{i,n}^{(n)}$ and the proof is complete.

%Thus, any point in the blowup locus $\tr_{k,n}(F[n-1])$ of the form $(p_1,\dots,p_{n-1};p_{k})$ where $p_{k}$ does not map to $p_{i}$ under the projection map $\si_{B,k}\to \si_{B,i}$ is not a point of $D_{i,n}^{(n)}$, so the induction step is finished and 
%\end{enumerate}
\end{proof}

\begin{proposition}\label{p-i-j}
Let $1\leq i<j\leq n$ and $D_{i,j}^{(n)}$ be a divisor of $F[n]$ as defined above. A $B$-point of $F[n]$, denoted by a tuple $(p_1, p_2,\dots, p_{n})$ consistent with Notation~\ref{k-points}, is a point of $D_{i,j}^{(n)}$ if and only if the projection $\si_{B,j}\to \si_{B,i}$ maps the point $p_{j}$ to the point $p_{i}$.
\end{proposition}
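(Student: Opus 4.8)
The plan is to prove the statement by induction on $n$, distinguishing the two cases $j<n$ and $j=n$, since the divisors $D_{i,j}^{(n)}$ are constructed differently in these two regimes. The case $j=n$ will make no appeal to smaller values of $n$, so it simultaneously serves as the base case of the induction. Throughout I read ``$(p_1,\dots,p_n)$ is a point of $D_{i,j}^{(n)}$'' as: the classifying morphism $B\to F[n]$ factors through the closed subscheme $D_{i,j}^{(n)}$; and ``$p_j$ maps to $p_i$'' as the equality $\ov{p_j}=p_i\colon B\to\si_{B,i}$, where $\ov{p_j}$ is the composite $B\xrightarrow{p_j}\si_{B,j}\to\si_{B,i}$.

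For the case $j<n$, I would invoke Proposition~\ref{full}(a) with $k=n$: the divisor $D_{i,j}^{(n)}\subset\si_{n,n}=F[n]$ is the preimage of $D_{i,j}^{(n-1)}\subset F[n-1]$ under the projection $\si_{n,n}\to F[n-1]$, which by Theorem~\ref{main2}(c) is the forgetful map $\Pi_n\colon(p_1,\dots,p_n)\mapsto(p_1,\dots,p_{n-1})$. Hence $(p_1,\dots,p_n)$ factors through $D_{i,j}^{(n)}$ if and only if its truncation $(p_1,\dots,p_{n-1})$ factors through $D_{i,j}^{(n-1)}$. Since $j\le n-1$, the schemes $\si_{B,i},\si_{B,j}$ and the sections $p_i,p_j$ are unchanged under truncation, so the induction hypothesis delivers exactly the desired equivalence.

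The case $j=n$ carries the geometric content. Using Proposition~\ref{full}(b) with $k=n$, the divisor $D_{i,n}^{(n)}\subset\si_{n,n}=F[n]$ is the preimage, under the blowdown tower $\rho\colon\si_{n,n}\to\si_{n,i+1}$, of the exceptional divisor $E_i=D_{i,n}^{(n)}\subset\si_{n,i+1}=\Bl_{\tr_{i,n}}\si_{n,i}$. Iterating Proposition~\ref{beh-maps}(b), the point $\rho\circ(p_1,\dots,p_n)$ is $(p_1,\dots,p_{n-1};q)$, where $q\in\si_{B,i+1}$ is the image of $p_n$ under $\si_{B,n}\to\si_{B,i+1}$. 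Everything then reduces to a claim about a single blowup: a $B$-point $(p_1,\dots,p_{n-1};q)$ of $\si_{n,i+1}$ factors through $E_i$ if and only if $q$ maps to $p_i$ under $\si_{B,i+1}\to\si_{B,i}$.

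To prove this claim, I would use that for the blowup $\si_{n,i+1}\xrightarrow{\pi}\si_{n,i}$ along the regularly embedded center $Z=\tr_{i,n}(F[n-1])$ one has $E_i=\pi^{-1}(Z)$ as closed subschemes (the ideal satisfies $\II_Z\cdot\OO_{\si_{n,i+1}}=\OO(-E_i)$). Consequently $\rho\circ(p_1,\dots,p_n)$ factors through $E_i$ if and only if its further image $(p_1,\dots,p_{n-1};\ov q)$ factors through $Z$, where $\ov q\in\si_{B,i}$ is the image of $q$. By Proposition~\ref{beh-maps}(a) the center $Z$ is the image of the section $\tr_{i,n}=\sigma_{n,i}$, whose $B$-points are exactly the tuples $(p_1,\dots,p_{n-1};p_i)$; hence factoring through $Z$ is equivalent to $\ov q=p_i$, and since $\ov q$ is the image of $p_n$ under $\si_{B,n}\to\si_{B,i}$, this is precisely the condition that $p_n$ maps to $p_i$. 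The main obstacle is exactly this last claim: one must be careful to read ``lying on the exceptional divisor'' scheme-theoretically and to argue via the identity $E_i=\pi^{-1}(Z)$, so that preimages of closed subschemes compose cleanly; the remaining bookkeeping of how $B$-points transform under the maps $\pi_{n,\ast}$ and $\sigma_{n,i}$ is routine given Proposition~\ref{beh-maps}.
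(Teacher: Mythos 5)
Your proposal is correct and follows essentially the same route as the paper: reduce the case $j<n$ to $F[n-1]$ via Proposition~\ref{full}(a) and the forgetful map, then for $j=n$ identify $D_{i,n}^{(n)}\subset\si_{n,n}$ as the preimage of the blowup center $\tr_{i,n}(F[n-1])\subset\si_{n,i}$ and compare $B$-points using Proposition~\ref{beh-maps}. The only cosmetic difference is that you split the preimage statement into two steps (preimage of the exceptional divisor, then the scheme-theoretic identity $E=\pi^{-1}(Z)$), which the paper merges into a single appeal to Proposition~\ref{full}.
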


\begin{proof}
When $j<n$, $D_{i,j}^{(n)}$ is the inverse image of $D_{i,j}^{(n-1)}$ under the forgetful map $F[n]\to F[n-1]$. Hence we can assume without loss of generality that $j=n$. By Proposition~\ref{full}, the divisor $D_{i,n}^{(n)}\subset F[n]=\si_{n,n}$ is the inverse image of the blowup locus $\tr_{i,n}(F[n-1])\subset \si_{n,i}$ under the projection map $\si_{n,n}\to \si_{n,i}$:

%Let $D_{i,j}^{(n)}$ be a divisor of $F[n]$ as constructed above. 

%If $j<n$, then by Proposition~\ref{full}, . If the claim holds true for $D_{i,j}^{(n-1)}\subset F[n-1]$, then it also holds true for $D_{i,j}^{(n)}\subset F[n]$. In other words, we can assume 

\[\begin{tikzcd}
D_{i,n}^{(n)}=\pr^{-1}(\tr_{i,n}(F[n-1])\subset \si_{n,n}\arrow[r,"\pr"] & \si_{n,i}\arrow[r] & F[n-1]\arrow[l, bend left,"\tr_{i,n}"].
\end{tikzcd}\]

By Proposition~\ref{beh-maps}, the morphism $\si_{n,n}\to \si_{n,i}$ maps points $(p_1,\dots,p_{n-1},p_n)\mapsto (p_1,\dots, p_{n-1};\ov{p_n})$, where $\ov{p_n}\in \si_{B,i}$ is the projection of $p_n$ under the map $\si_{B,n}\to \si_{B,i}$. On the other hand, the section $\tr_{i,n}: F[n-1]\to \si_{n,i}$ maps points $(p_1,\dots,p_{n-1})\mapsto (p_1,\dots, p_{n-1};p_i)$. In conclusion, a $B$-point $F[n]$ is a point of $D_{i,n}^{(n)}$ if and only if $p_n\to p_i$ under the map $\si_{B,n}\to \si_{B,i}$.\qedhere

\end{proof}

%\begin{remark}\label{B-i-j}
%We can generalize Proposition~\ref{p-i-j} for every $k$-scheme $B$ as follows: a $B$-point of $F[S_1,n]$, denoted by a tuple $(\sigma_1,\dots,\sigma_{n})$ consistent with Notation~\ref{k-points}, is a $B$-point of the divisor $D_{i,j}^{(n)}\subset F[S_1,n]$ if and only if the section $B\xrightarrow{\sigma_{j}}\si_{j}$ is mapped to the section $B\xrightarrow{\sigma_{i}}\si_{i}$ under the projection map $\si_{j}\to \si_{i}$.
%\end{remark}

\begin{proposition}\label{div-sm}
Let $1\leq i<j\leq n$. The divisor $D_{i,j}^{(n)}\subset F[n]$ is smooth over $\Spec R$.
\end{proposition}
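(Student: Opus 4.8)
The plan is to induct on $n$, using the two-part structure of Proposition~\ref{full} to split into the cases $j<n$ and $j=n$. Throughout I would keep in mind that the forgetful morphism $\Pi_n\colon F[n]\cong\si_{n,n}\to F[n-1]$, and more generally every structure map $\si_{n,k}\to F[n-1]$, is smooth (Theorem~\ref{main2}, via Lemma~\ref{technical}), and that all the divisors $D^{(m)}_{a,b}$ with $m<n$ are smooth over $\Spec R$ by the inductive hypothesis. The base case is $n=2$, where $D^{(2)}_{1,2}\subset F[2]\cong\Bl_{\tr}(S_1\times S_1)$ is the exceptional divisor, a $\P^1$-bundle over $F[1]\cong S_1$, hence smooth.

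For $j<n$ the reduction is immediate: Proposition~\ref{full}(a) (see also the proof of Proposition~\ref{p-i-j}) identifies $D^{(n)}_{i,j}$ with the preimage $\Pi_n^{-1}(D^{(n-1)}_{i,j})=D^{(n-1)}_{i,j}\times_{F[n-1]}F[n]$. The projection to $D^{(n-1)}_{i,j}$ is a base change of the smooth map $\Pi_n$, hence smooth, and composing with the smooth structure map $D^{(n-1)}_{i,j}\to\Spec R$ (inductive hypothesis) shows $D^{(n)}_{i,j}$ is smooth over $\Spec R$. This leaves only the case $j=n$.

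For $j=n$ I would first dispose of the subcase $i=n-1$: here $D^{(n-1)}_{n-1,n}$ is by definition the exceptional divisor of the last blowup $\si_{n,n}=\Bl_{\tr_{n-1,n}(F[n-1])}\si_{n,n-1}$, which is the projectivized normal bundle of the regularly embedded, codimension-$2$, smooth center $Z:=\tr_{n-1,n}(F[n-1])\cong F[n-1]$, hence a $\P^1$-bundle over $F[n-1]$ and smooth. For $i<n-1$ I would work one step below the top. Using $\si_{n,n-1}\cong F[n-1]\times_{F[n-2]}F[n-1]$ (Equation~\ref{si-n-i}) and the $B$-point descriptions of Propositions~\ref{beh-maps} and~\ref{p-i-j}, the second projection $q=\pr_2\colon\si_{n,n-1}\to F[n-1]$ is smooth, and one checks on points that $D^{(n)}_{i,n}\subset\si_{n,n-1}$ equals $q^{-1}(D^{(n-1)}_{i,n-1})$: both sides are the tuples $(p_1,\dots,p_{n-1};p'_{n-1})$ for which $p'_{n-1}$ maps to $p_i$ under $\si_{B,n-1}\to\si_{B,i}$. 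By induction $D^{(n-1)}_{i,n-1}$ is smooth, so $D^{(n)}_{i,n}$ is a smooth divisor on $\si_{n,n-1}$, pulled back from the base of $q$; and $D^{(n)}_{i,n}\subset F[n]$ is its strict transform under the single remaining blowup $\si_{n,n}=\Bl_Z\si_{n,n-1}$.

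The main obstacle — the only point that must genuinely be checked rather than quoted — is that the center $Z$ meets $D^{(n)}_{i,n}$ transversally inside the smooth scheme $\si_{n,n-1}$. Granting this, the strict transform is $\Bl_{Z\cap D^{(n)}_{i,n}}D^{(n)}_{i,n}$, which is smooth because $D^{(n)}_{i,n}$ and the intersection $Z\cap D^{(n)}_{i,n}\cong D^{(n-1)}_{i,n-1}$ are both smooth, completing the induction. I expect transversality to fall out of the fibration structure rather than a local coordinate grind: $D^{(n)}_{i,n}=q^{-1}(D^{(n-1)}_{i,n-1})$ is vertical for $q$, so at each point its tangent space contains all of $\ker dq$ and surjects onto $T\,D^{(n-1)}_{i,n-1}$; meanwhile $Z$ is a section of $q$, indeed $q\circ\tr_{n-1,n}=\id_{F[n-1]}$ since $\tr_{n-1,n}$ is the diagonal and $q$ the second projection, so $dq$ carries $T_pZ$ isomorphically onto $T_{q(p)}F[n-1]$. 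These two facts together force $T_pZ+T_pD^{(n)}_{i,n}=T_p\si_{n,n-1}$ at every point of the intersection, which is exactly the transversality required (and is consistent with $Z\not\subset D^{(n)}_{i,n}$, already used in Proposition~\ref{full}).
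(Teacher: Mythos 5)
Your proof is correct, and while it follows the paper's overall template (induction on $n$, base case $F[2]\cong\Bl_{\tr}(S_1\times S_1)$, case split $j<n$ versus $j=n$), it streamlines both cases in ways worth noting. For $j<n$ the paper runs a second, inner induction over the levels $\si_{n,1},\dots,\si_{n,n}$, re-verifying at each blowup that the strict transform stays smooth by computing $D_{i,j}^{(n)}\cap\tr_{k,n}(F[n-1])\cong D_{i,j}^{(n-1)}$; you instead invoke Proposition~\ref{full}(a) once at the top level to write $D_{i,j}^{(n)}=\Pi_n^{-1}(D_{i,j}^{(n-1)})$ and conclude by smoothness of $\Pi_n$ and base change, which is shorter and avoids the inner induction entirely (and is not circular, since Proposition~\ref{full} does not depend on this smoothness statement). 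For $j=n$ the paper again inducts over $k=i+1,\dots,n$ through every intermediate blowup, whereas you collapse the problem to the single last blowup by identifying $D_{i,n}^{(n)}\subset\si_{n,n-1}$ with $\pr_2^{-1}(D_{i,n-1}^{(n-1)})$ under $\si_{n,n-1}\cong F[n-1]\times_{F[n-2]}F[n-1]$; this identification is not stated in the paper, but it follows from the same $B$-point computations used in Propositions~\ref{beh-maps}, \ref{full} and \ref{p-i-j}, so it meets the paper's own standard of rigor. The most substantive difference is that you isolate and actually prove the transversality of the center $\tr_{n-1,n}(F[n-1])$ with the divisor (via the section/vertical-tangent-space argument), which is exactly the hypothesis needed for ``the strict transform of a smooth divisor along a smooth center is smooth''; the paper only asserts that smoothness is preserved because the intersection with the blowup locus is smooth, leaving the expected-codimension/regular-embedding point implicit. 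So your route buys a shorter induction and a more complete justification of the key blowup step, at the cost of one extra scheme identification to check on points.
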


\begin{proof}
We prove this statement inductively over $n$. When $n=2$, we have only one divisor of this form, namely $D_{1,2}^{(2)}$, which is the exceptional divisor of the blowup of $S_1\times S_1$ along the diagonal, so it is clearly smooth. Inductively, we need to analyze two cases: either $j<n$ or $j=n$. 

Assume $j<n$. We show inductively over $k$ that $D_{i,j}^{(n)}\subset \si_{n,k}$ is smooth. As the base case $k=1$, recall that the divisor $D_{i,j}^{(n)}\subset \si_{n,1}=F[n-1]\times S_1$ is defined to be $D_{i,j}^{(n-1)}\times S_1$, hence it is smooth by the induction hypothesis. Now assume $D_{i,j}^{(n)}\subset \si_{n,k}$ is smooth. We know that $\si_{n,k+1}=\Bl_{\tr_{k,n}}\si_{n,k}$ and $D_{i,j}^{(n)}\subset \si_{n,k+1}$ is the strict transform of the divisor with the same name coming from $\si_{n,k}$. We claim that smoothness is preserved because the intersection of this divisor with the blowup locus $D_{i,j}^{(n)}\cap\tr_{k,n}(F[n-1])$ inside $\si_{n,k}$ is itself smooth. In fact, we show it satisfies the following isomorphism:
\[D_{i,j}^{(n)}\cap \tr_{k,n}(F[n-1])\cong D_{i,j}^{(n-1)}.\]

As a result of Proposition~\ref{beh-maps} and Proposition~\ref{full}, a $B$-point of $D_{i,j}^{(n)}\subset \si_{n,k}$ has the form $(p_1,\dots,p_{n-1};p'_k)$, where $p_j\to p_i$ under the projection map $\si_{B,j}\to \si_{B,i}$, and a $B$-point of $\tr_{k,n}(F[n-1])\subset \si_{n,k}$ has the form $(p_1,\dots, p_{n-1};p_k)$. In conclusion, a $B$-point of $D_{i,j}^{(n)}\cap \tr_{k,n}(F[n-1])$ can be summarized as $(p_1,\dots, p_{n-1}; p_k)$, where $p_{j}\to p_{i}$ under the projection map $\si_{B,j}\to\si_{B,i}$. This means the functors of points of $D_{i,j}^{(n)}\cap \tr_{k,n}(F[n-1])$ and $D_{i,j}^{(n-1)}$ are isomorphic, hence the two schemes are isomorphic, proving our claim.

Now assume that $j=n$. We show inductively that $D_{i,n}^{(n)}\subset \si_{n,k}$ is smooth, for all $i+1\leq k\leq n$. When $k=i+1$, $D_{i,n}^{(n)}\subset \si_{n,i+1}$ is the exceptional divisor of the blowup of $\si_{n,i}$ along the locus $\tr_{i,n}(F[n-1])$. By Theorem~\ref{main2}, $\si_{n,i}$ is smooth and the blowup locus $\tr_{i,n}(F[n-1])$ is regularly embedded, so this exceptional divisor $D_{i,n}^{(n)}\subset \si_{n,i+1}$ is smooth. For the induction step we show, as before, that the intersection $D_{i,j}^{(n)}\cap\tr_{k,n}(F[n-1])\subset \si_{n,k}$ satisfies the following isomorphism, and hence it is smooth:
$$D_{i,n}^{(n)}\cap \tr_{k,n}(F[n-1])\cong D_{i,k}^{(n-1)}.$$

As a result of Proposition~\ref{beh-maps} and Proposition~\ref{full}, a $B$-point of $D_{i,n}^{(n)}\subset \si_{n,k}$ has the form $(p_1,\dots,p_{n-1};p'_k)$, where $p'_k\to p_i$ under the projection map $\si_{B,k}\to \si_{B,i}$, and a $B$-point of $\tr_{k,n}(F[n-1])\subset \si_{n,k}$ has the form $(p_1,\dots, p_{n-1};p_k)$. In conclusion, a $B$-point of $D_{i,n}^{(n)}\cap \tr_{k,n}(F[n-1])$ can be identified as $(p_1,\dots, p_{n-1}; p_{k})$, where  $p_{k}\to p_{i}$ under the projection map $\si_{B,k}\to\si_{B,i}$. This means the functors of points of $D_{i,j}^{(n)}\cap \tr_{k,n}(F[n-1])$ and $D_{i,k}^{(n-1)}$ are isomorphic, hence the two schemes are isomorphic.\qedhere
\end{proof}

\section{The Chow ring of the moduli scheme}
\label{chow-ring}

\hspace{12pt} In this section we assume that the underlying ring $R$ is an algebraically closed field $k$. In this new setup, the moduli space $F[n]$ and all the schemes in its universal family are smooth projective varieties. 

The main result here is that the Chow ring of the moduli space $F[n]$ is generated by the classes of the divisors $\{D_{i,j}^{(n)}\}_{1\leq i<j\leq n}$ over the Chow ring $\A^*(S_1^n)$. We conclude the section by proving certain key relations among these classes in the Chow ring $\A^*(F[n])$. In the next and final section, we prove that these relations are sufficient to give a precise description of the Chow ring $\A^*(F[n])$ in the special case where $S_1$ is a rational surface and the base field is the complex numbers $\mathbb{C}$.

We start with a theorem by Keel, which is the key ingredient in our proof:

\begin{theorem}\label{keel}
Let $Y$ be a variety and let $i:X\hookrightarrow Y$ be a regularly embedded subvariety. Let $\tilde{Y}$ be the blowup of $Y$ along $X$. Suppose the map of bivariate rings $i^*:\A^*(Y)\to \A^*(X)$ is surjective. Then:

\[\A^*(\tilde{Y})\cong \frac{\A^*(Y)[T]}{(P(T),T\cdot ker(i^*))},\]
where $P(T)\in\A^*(Y)[T]$ is any polynomial whose constant term is $[X]$ and whose restriction to $\A^*(X)$ is the Chern polynomial of the normal bundle $N=N_XY$, i.e.:
\[i^*P(T)=T^d+T^{d-1}c_1(N)+\dots+c_{d-1}(N)T+c_d(N),\]
where $d=codim(X,Y)$. This isomorphism is induced by
\[\pi^*:\A^*(Y)[T]\to\A^*(\tilde{Y})\]
and by sending $-T$ to the class of the exceptional divisor.
\end{theorem}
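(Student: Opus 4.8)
The plan is to work directly with the blow-up square. Write $E\subset\tilde Y$ for the exceptional divisor, $j:E\hookrightarrow\tilde Y$ for its inclusion, and $g=\pi|_E:E\to X$; since $i$ is a regular embedding of codimension $d$, we have $E=\P(N)$, a projective bundle over $X$, and I set $\zeta=c_1(\OO_E(1))$. I would first assemble the standard toolkit: the self-intersection identity $j^*[E]=-\zeta$ (since $\OO_{\tilde Y}(E)|_E=\OO_E(-1)$), the base-change identity $j^*\pi^*=g^*i^*$, the projection formula, the Grothendieck relation governing $\A^*(E)$ over $\A^*(X)$, and Fulton's structure theorem for Chow groups of blow-ups, namely the additive decomposition
\[\A^*(\tilde Y)\;\cong\;\pi^*\A^*(Y)\;\oplus\;\bigoplus_{k=0}^{d-2} j_*\!\big(\zeta^{k}\,g^*\A^*(X)\big)\]
together with the key formula $\pi^*i_*\alpha=j_*\big(c_{d-1}(\QQ)\cdot g^*\alpha\big)$, where $\QQ$ is the universal rank $d-1$ quotient in $0\to\OO_E(-1)\to g^*N\to\QQ\to 0$ and hence $c_{d-1}(\QQ)=g^*\big(\sum_{m=0}^{d-1}c_m(N)\,\zeta^{d-1-m}\big)$. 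The objective is to show that the ring map $\phi:\A^*(Y)[T]\to\A^*(\tilde Y)$ extending $\pi^*$ by $T\mapsto-[E]$ descends to an isomorphism on the stated quotient.

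The second step is to verify that both families of relations lie in $\ker\phi$. For $y\in\ker(i^*)$ I compute $[E]\cdot\pi^*y=j_*\big(j^*\pi^*y\big)=j_*\big(g^*i^*y\big)=0$, so $T\cdot\ker(i^*)\subset\ker\phi$. For the polynomial, expand $P(-[E])=\sum_{m=0}^{d}\pi^*c_m\,(-[E])^{d-m}$ with $c_0=1$ and constant term $c_d=[X]$. Using $[E]^{k}\pi^*y=(-1)^{k-1}j_*\big(\zeta^{k-1}g^*i^*y\big)$ for $k\ge1$, the terms with $m\le d-1$ collapse to $-\,j_*\big(c_{d-1}(\QQ)\big)$, which cancels against $\pi^*[X]=j_*\big(c_{d-1}(\QQ)\big)$ supplied by the key formula; thus $P(-[E])=0$. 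Hence $\phi$ factors through a surjection $\bar\phi$ out of $\A^*(Y)[T]/(P(T),\,T\cdot\ker i^*)$.

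Surjectivity of $\bar\phi$ is exactly where the hypothesis on $i^*$ is used. Each generator $j_*(\zeta^{k}g^*x)$, $0\le k\le d-2$, of the non-$\pi^*$ part of the decomposition can be lifted by choosing $y$ with $i^*y=x$ (possible precisely because $i^*$ is surjective); then the projection formula and $j^*[E]=-\zeta$ give
\[ j_*(\zeta^{k}g^*x)=j_*\big(\zeta^{k}\,j^*\pi^*y\big)=\pi^*y\cdot j_*(\zeta^{k})=(-1)^{k}[E]^{k+1}\pi^*y, \]
which lies in $\operatorname{im}(\phi)$. Together with $\pi^*\A^*(Y)\subset\operatorname{im}(\phi)$ this shows $\bar\phi$ is onto.

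The \textbf{main obstacle} is injectivity, which I would handle by pinning down the $\A^*(Y)$-module structure of $\Lambda:=\A^*(Y)[T]/(P(T),\,T\cdot\ker i^*)$ and matching it against the additive decomposition above. Since $P$ is monic of degree $d$, the ring $\A^*(Y)[T]/(P(T))$ is free over $\A^*(Y)$ on $1,T,\dots,T^{d-1}$; the delicate point is to prove that the ideal generated by $T\cdot\ker(i^*)$ is exactly $\bigoplus_{k=1}^{d-1}\ker(i^*)\cdot T^{k}$. The inclusion $\supseteq$ is immediate, and for $\subseteq$ one reduces $T^{m}a$ ($a\in\ker i^*$, $m\ge d$) modulo $P$: all coefficients stay in $\ker(i^*)$ because $i^*(c_m a)=c_m(N)\,i^*a=0$, and \emph{no} constant term survives because the only candidate, $c_d\,a=[X]\cdot a=i_*(i^*a)$, vanishes identically in $\A^*(Y)$. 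Consequently
\[ \Lambda\;\cong\;\A^*(Y)\ \oplus\ \bigoplus_{k=1}^{d-1}\big(\A^*(Y)/\ker i^*\big)\,T^{k}\;\cong\;\A^*(Y)\ \oplus\ \bigoplus_{k=1}^{d-1}\A^*(X), \]
again using surjectivity of $i^*$ to identify $\A^*(Y)/\ker i^*\cong\A^*(X)$. This is the same datum, summand for summand, as Fulton's decomposition of $\A^*(\tilde Y)$, and the surjectivity computation shows $\bar\phi$ sends $\A^*(X)\cdot T^{k}$ isomorphically onto $j_*\big(\zeta^{k-1}g^*\A^*(X)\big)$ (up to sign). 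A surjection between these two decompositions that is an isomorphism on each graded piece must be an isomorphism, finishing the proof. I expect the genuinely technical inputs to be the additive decomposition and key formula (which I would quote from Fulton) and the verification that the ideal $(T\cdot\ker i^*)$ has no constant-term component, since that is what makes the module computation close up cleanly.
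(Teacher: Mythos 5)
The paper does not actually prove this statement: it is quoted from the appendix of Keel's paper, and the ``proof'' is the citation. Your argument is, in substance, a correct reconstruction of the proof given there. The three steps --- checking that $T\cdot\ker(i^*)$ and $P(-[E])$ die under $\phi$ (the latter by cancelling $-j_*(c_{d-1}(\QQ))$ against $\pi^*[X]=j_*(c_{d-1}(\QQ)\cdot g^*1)$), using surjectivity of $i^*$ together with the projection formula to hit every summand $j_*(\zeta^k g^*\A^*(X))$, and the module computation showing that the ideal $(T\cdot\ker i^*)$ contributes no constant term because $[X]\cdot a=i_*(i^*a)=0$ --- are exactly the right ones, and the external inputs you defer to Fulton (the additive decomposition of $\A^*(\tilde Y)$ and the key formula $\pi^*i_*\alpha=j_*(c_{d-1}(\QQ)\cdot g^*\alpha)$) are the same ones Keel uses. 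Two points deserve more care than your sketch gives them. First, to conclude that your block-diagonal surjection is an isomorphism you need the \emph{strong} form of the decomposition: not just that $\A^*(\tilde Y)$ is the internal direct sum of $\pi^*\A^*(Y)$ and the $j_*(\zeta^k g^*\A^*(X))$, but that $\pi^*$ and each $x\mapsto j_*(\zeta^k g^*x)$ is injective (the former from $\pi_*\pi^*=\mathrm{id}$, the latter is part of Fulton's statement); without this, ``surjective on each summand'' does not yield injectivity. Second, Keel's theorem is stated for the operational bivariant ring of a possibly singular variety, where the decomposition you quote is itself the nontrivial content of his appendix and does not follow from the smooth blow-up formula; in the generality you implicitly work in (and in every application in this paper, where all schemes are smooth over a field) this is harmless, but it is the reason the theorem is attributed to Keel rather than to Fulton directly.
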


\begin{proof}
See~\cite{keel}, Appendix, Theorem 1.
\end{proof}

\begin{remark}
The moduli space $F[n]$ is a smooth projective variety, for any $n\geq 1$, hence its bivariate ring $\A^*(F[n])$ is isomorphic to its Chow ring $\mathbb{CH}^*(F[n])$ (see~\cite{fulton}, Ch. 17).
\end{remark}

\begin{corollary}\label{chow2} Let $1\leq i< n$. Let $\si_{n,i+1}$ and $\si_{n,i}$ be two of the varieties in the universal family over the moduli variety $F[n-1]$. The Chow ring of $\si_{n,i+1}$ has the following description:
\[\A^*(\si_{n,i+1})\cong \frac{\A^*(\si_{n,i})[D_{i,n}^{(n)}]}{\langle P_{i,n}(-D_{i,n}^{(n)}), D_{i,n}^{(n)}\cdot \ker(\tr_{i,n}^*)\rangle},\]
where $P_{i,n}$ is a quadratic polynomial with coefficients in $\A^*(\si_{n,i})$.
\end{corollary}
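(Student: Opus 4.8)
The plan is to apply Keel's theorem (Theorem~\ref{keel}) directly to the blowup $\si_{n,i+1}=\Bl_{\tr_{i,n}(F[n-1])}\si_{n,i}$, taking $Y=\si_{n,i}$, $X=\tr_{i,n}(F[n-1])$, and $\tilde{Y}=\si_{n,i+1}$, with $D_{i,n}^{(n)}$ as the exceptional divisor. Under the substitution $T=-D_{i,n}^{(n)}$ (so that $-T$ maps to the class of the exceptional divisor, as Keel prescribes), the presentation in the statement is exactly the conclusion of Theorem~\ref{keel}, since negating $T$ does not change the ideal $\langle P_{i,n}(-D_{i,n}^{(n)}), D_{i,n}^{(n)}\cdot\ker(\tr_{i,n}^*)\rangle$. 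Thus the whole task reduces to checking the two hypotheses of that theorem and pinning down the degree of $P$. Note that, since $R=k$ is an algebraically closed field in this section, all the schemes $\si_{n,i}$ are smooth projective varieties, so Keel's theorem applies.

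First I would verify that $X=\tr_{i,n}(F[n-1])$ is regularly embedded in $Y=\si_{n,i}$. This is already part of the content of Theorem~\ref{main2}: the structure map $\si_{n,i}\to F[n-1]$ is smooth (by Lemma~\ref{technical}), and $\tr_{i,n}$ is a section of it, so its image is a regularly embedded subvariety. Identifying $\A^*(X)\cong \A^*(F[n-1])$ via the isomorphism $\tr_{i,n}:F[n-1]\xrightarrow{\sim}\tr_{i,n}(F[n-1])$, the restriction map $i^*$ becomes $\tr_{i,n}^*$.

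Next, the crucial hypothesis is that $\tr_{i,n}^*:\A^*(\si_{n,i})\to \A^*(F[n-1])$ is surjective. This holds for the soft reason that $\tr_{i,n}$ is a section: if $\pi:\si_{n,i}\to F[n-1]$ denotes the projection, then $\tr_{i,n}^*\circ \pi^*=\id$ on $\A^*(F[n-1])$, so $\tr_{i,n}^*$ is a split surjection. Finally, to see that $P=P_{i,n}$ is quadratic, I would compute the codimension $d=\text{codim}(X,Y)$. Since $\si_{n,i}\to F[n-1]$ is smooth with two-dimensional fibers — its fibers are the blown-up surfaces parametrized by the universal family, equivalently $\si_{n,i}\cong F[n-1]\times_{F[i-1]}F[i]$ is the base change of the relative-dimension-two morphism $F[i]\to F[i-1]$ — any section has image of codimension $2$. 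Hence $d=2$, and by Keel's theorem $P_{i,n}$ is a monic degree-two polynomial with coefficients in $\A^*(\si_{n,i})$ (constant term $[\tr_{i,n}(F[n-1])]$, and restriction to $\A^*(F[n-1])$ equal to the Chern polynomial of the normal bundle $N_{\tr_{i,n}(F[n-1])}\si_{n,i}$), i.e. quadratic, as claimed.

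There is essentially no hard obstacle here: the statement is a clean specialization of Keel's theorem, and the only hypothesis requiring thought is the surjectivity of $\tr_{i,n}^*$, which is immediate from the section structure. The only mild care needed is the bookkeeping of the sign convention $T=-D_{i,n}^{(n)}$ and of the identification $\A^*(X)\cong \A^*(F[n-1])$; neither affects the resulting ideal, so the presentation in the statement follows verbatim.
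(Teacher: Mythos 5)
Your proposal is correct and follows exactly the paper's own argument: apply Keel's theorem to the blowup of $\si_{n,i}$ along $\tr_{i,n}(F[n-1])$, using Theorem~\ref{main2} for the regular embedding and the section structure of $\tr_{i,n}$ for the surjectivity of $\tr_{i,n}^*$. Your extra remarks on the codimension-two computation and the sign convention $T=-D_{i,n}^{(n)}$ are details the paper leaves implicit, but they do not change the route.
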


\begin{proof}
In the universal family over the moduli variety $F[n-1]$, $\si_{n,i+1}$ is the blowup of $\si_{n,i}$ along the locus $\tr_{i,n}(F[n-1])$, and the corresponding exceptional divisor is $D_{i,n}^{(n)}$. By Theorem~\ref{main2}, $\si_{n,i}$ is a variety and $\tr_{i,n}:F[n-1]\hookrightarrow \si_{n,i}$ is a regularly embedded subvariety. Additionally, $\tr_{i,n}$ is a section of the projection $\si_{n,i}\to F[n-1]$, hence the corresponding map on Chow rings $\tr_{i,n}^*:\A^*(\si_{n,i})\to \A^*(F[n-1])$ is surjective. The conclusion follows immediately as an application of Theorem~\ref{keel}.\qedhere
\end{proof}

\begin{remark}\label{rmkk}
By Proposition~\ref{full}, the induced map on Chow rings: $\pi_{n,i+1}^*:\A^*(\si_{n,i})\to\A^*(\si_{n,i+1})$ sends the class of any divisor $D_{j,k}^{(n)}$ to the class of the divisor with the same name inside $\A^*(\si_{n,i})$. This means that the notation remains consistent in all statements of this section.
\end{remark}

%Now, recall that $F[n]=\si_{n,n}$ is obtained through a series of blowups of $(S_0)^n$ as shown below: 

%\begin{align*}
%\begin{tikzcd}
%\si_{n,n}=F[n]\arrow[d]\\
%\si_{n,n-1}\arrow[d]\\
%\dots\arrow[d]\\
%\si_{n,1}=F[n-1]\times S_1=\si_{n-1,n-1}\times S_1\arrow[d]\\
%\si_{n-1,n-2}\times S_0\arrow[d]\\
%\dots\arrow[d]\\
%\si_{n-1,1}\times S_1=F[n-2]\times (S_1)^2=\si_{n-2,n-2}\times (S_1)^2\arrow[d]\\
%\dots\arrow[d]\\
%S_1)^n
%\end{tikzcd}
%\end{align*}

\begin{theorem}\label{chow}
With notation as in Corollary~\ref{chow2}, the Chow ring of the moduli space $F[n]$ is as follows:
\[\A^*(F[n])\cong \frac{\A^*(S_1^n)[D_{i,j}^{(n)}]_{1\leq i<j\leq n}}{\langle D_{i,j}^{(n)}\cdot ker(\tr_{i,j}^*), P_{i,j}(-D_{i,j}^{(n)})_{1\leq i<j\leq n}\rangle}.\]
\end{theorem}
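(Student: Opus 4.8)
The plan is to argue by induction on $n$, assembling the presentation one blowup at a time via Keel's theorem (Theorem~\ref{keel}). The base case $n=1$ is immediate: $F[1]\cong S_1=S_1^1$ carries no divisors $D_{i,j}^{(1)}$ (the index set $\{1\le i<j\le 1\}$ is empty), so both sides are just $\A^*(S_1)$.

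For the inductive step, the starting observation is that the projection $\rho:F[n]\to S_1^n$ factors as a composition of blowups along smooth, regularly embedded centers, each of which is the image of a section. Theorem~\ref{main2} exhibits the top portion
\[F[n]=\si_{n,n}\to\si_{n,n-1}\to\dots\to\si_{n,1}=F[n-1]\times S_1\]
as a tower of blowups, the $i$-th of which is $\si_{n,i+1}=\Bl_{\tr_{i,n}}\si_{n,i}$ with exceptional divisor $D_{i,n}^{(n)}$; the centers $\tr_{i,n}(F[n-1])$ are images of sections, so the restriction maps $\tr_{i,n}^*$ are surjective and Corollary~\ref{chow2} applies at each stage. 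For the lower portion $F[n-1]\times S_1\to S_1^n$ I would invoke the inductive hypothesis: $F[n-1]\to S_1^{n-1}$ is itself such a tower of blowups, and since blowing up commutes with the flat base change $(-)\times S_1$ and a section stays a section after this base change, $F[n-1]\times S_1\to S_1^{n-1}\times S_1=S_1^n$ is again a tower of blowups whose centers have surjective restriction maps. Its exceptional divisors are precisely $D_{i,j}^{(n-1)}\times S_1=D_{i,j}^{(n)}$ for $1\le i<j\le n-1$, matching the definition of those divisors.

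With the full tower $S_1^n\rightsquigarrow F[n]$ in hand, I would iterate Keel's theorem once for each blowup, from the bottom up. Each step adjoins a single generator $D_{i,j}^{(n)}$ subject to the two relations $P_{i,j}(-D_{i,j}^{(n)})$ and $D_{i,j}^{(n)}\cdot\ker(\tr_{i,j}^*)$, where $P_{i,j}$ is the Chern polynomial of the normal bundle of the center (quadratic, since the centers have codimension $2$) and $\ker(\tr_{i,j}^*)$ is taken in the Chow ring at the stage where $D_{i,j}^{(n)}$ first appears as an exceptional divisor. Running all $\binom{n}{2}$ steps, starting from $\A^*(S_1^n)$ and passing through $\A^*(F[n-1]\times S_1)$ on the way to $\A^*(F[n])$, yields
\[\A^*(F[n])\cong\frac{\A^*(S_1^n)[D_{i,j}^{(n)}]_{1\le i<j\le n}}{\langle D_{i,j}^{(n)}\cdot\ker(\tr_{i,j}^*),\,P_{i,j}(-D_{i,j}^{(n)})\rangle},\]
as claimed.

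The point requiring the most care — and where the real work lies — is the bookkeeping that lets these successive Keel quotients assemble into a single clean presentation. The coefficients of $P_{i,j}$ and the elements of $\ker(\tr_{i,j}^*)$ live a priori in the intermediate ring at the relevant stage, and one must check they are transported unambiguously into the final polynomial ring by the subsequent pullback maps. This is guaranteed by Remark~\ref{rmkk}, which ensures the classes $D_{i,j}^{(n)}$ are named consistently under all the pullbacks, together with the fact that normal bundles pull back along the later blowups and along $(-)\times S_1$, so the polynomials $P_{i,j}$ are unchanged. One must also confirm that no unexpected relations appear when combining stages, i.e.\ that the ideal of the iterated quotient is literally the sum of the per-stage relation ideals; this follows because the hypotheses of Theorem~\ref{keel} hold at every step — regular embedding of the centers by Theorem~\ref{main2}, and surjectivity of the restriction from the section property — so each quotient has exactly the predicted shape with no collapsing. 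Verifying this compatibility, rather than any single Keel computation, is the crux of the argument.
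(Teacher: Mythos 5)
Your proposal is correct and takes essentially the same route as the paper: both realize the projection $F[n]\to S_1^n$ as a tower of $\binom{n}{2}$ blowups along the regularly embedded sections $\tr_{i,j}\times\id$ and iterate Keel's theorem (Theorem~\ref{keel}) up the tower, using surjectivity of $\tr_{i,j}^*$ and Remark~\ref{rmkk} to keep the generators and relations consistent in the final presentation. The only cosmetic difference is that you organize the tower by induction on $n$ (base-changing the $(n-1)$-stage tower by $\times S_1$), whereas the paper writes out the full tower $\si_{j,i+1}\times S_1^{n-j}\to\si_{j,i}\times S_1^{n-j}$ in one pass.
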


\begin{proof}
To generalize Corollary~\ref{chow2}, we first need to give an alternative way of defining the divisors $D_{i,j}^{(n)}\subset F[n]$. Recall that the natural projection map $F[n]\to S_1^n$ decomposes as a series of blowups; more specifically, we encounter the following situation:
\[\begin{tikzcd}[column sep=tiny]
F[n]=\si_{n,n}\arrow[r] &\dots\arrow[r] &\si_{j,i+1}\times S_1^{n-j}\arrow[r] & \si_{j,i}\times S_1^{n-j}\arrow[r] &\dots\arrow[r] &F[j]\times S_1^{n-j}\to \dots\to S_1^n\arrow[ll, bend left,"\tr_{i,j}\times \id"].
\end{tikzcd}\]

The scheme $\si_{j,i+1}\times (S_1)^{n-j}$ is the blowup of $\si_{j,i}\times (S_1)^{n-j}$ along the locus $\tr_{i,j}\times \id$. We define $D_{i,j}^{(n)}\subset \si_{j,i+1}\times (S_1)^{n-j}$ to be the exceptional divisor of this blowup. By abuse of notation, we define $D_{i,j}^{(n)}\subset F[n]=\si_{n,n}$ to be the strict transform of this exceptional divisor in the tower of blowups. Following an identical argument as the one in Proposition~\ref{full}, more is true: $D_{i,j}^{(n)}\subset F[n]$ is actually the full inverse image of the exceptional divisor in the tower of blowups. Therefore, by Theorem~\ref{keel}, we conclude that: 
$$\A^*(\si_{j,i+1}\times S_1^{n-j})\cong \frac{\A^*(\si_{j,i}\times S_1^{n-j})[D_{i,j}^{(n)}]}{\langle P_{i,j}(-D_{i,j}^{(n)}), D_{i,j}^{(n)}\cdot \ker(\tr_{i,j}^*)\rangle},$$
where $P_{i,j}$ is a quadratic polynomial with coefficients in $\A^*(\si_{j,i}\times(S_1)^{n-j})$. 

Applying this procedure step by step from $S_1^n$ all the way up to $\si_{n,n}=F[n]$, we immediately obtain the desired formula for the Chow ring of the moduli variety $F[n]$.\qedhere
\end{proof}

\begin{proposition}\label{relations}
Let $d$ be a divisor class of $S_1$ and $d_i^*\in \A^*(F[n])$ the image of $d$ under the composed projection $F[n]\to S_1^n\xrightarrow{\pr_i}S_1$. The following relations hold in the Chow ring $\A^*(F[n])$:
\begin{enumerate}
\item[(i)] $D_{i,j}^{(n)}d_i^*=D_{i,j}^{(n)}d_j^*, \forall 1\leq i<j\leq n$;
\item[(ii)] $D_{i,j}^{(n)}D_{j,k}^{(n)}=D_{i,k}^{(n)}D_{j,k}^{(n)},\forall 1\leq i<j<k\leq n.$
\end{enumerate}
\end{proposition}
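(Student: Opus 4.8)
The plan is to verify both identities by restricting to the relevant divisor and invoking the projection formula. For a smooth, hence regularly embedded, divisor $\iota\colon V\hookrightarrow F[n]$ and any class $\alpha\in\A^*(F[n])$ one has $[V]\cdot\alpha=\iota_*(\iota^*\alpha)$, so to prove a relation of the shape $V\cdot\alpha=V\cdot\beta$ it suffices to show $\iota^*\alpha=\iota^*\beta$ in $\A^*(V)$. The smoothness of all the divisors $D_{i,j}^{(n)}$ (Proposition~\ref{div-sm}) and the functor-of-points characterization of their points (Proposition~\ref{p-i-j}) are the two main inputs; the fact that each $D_{i,j}^{(n)}$ is mapped onto the diagonal $\tr_{i,j}$ by the projection $\pi\colon F[n]\to S_1^n$ handles part (i).

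For (i), I would take $V=D_{i,j}^{(n)}$, $\alpha=d_i^*$, $\beta=d_j^*$. Writing $\pr_i\colon S_1^n\to S_1$ for the coordinate projections, we have $d_i^*-d_j^*=\pi^*(\pr_i^*d-\pr_j^*d)$. Since $\pi$ maps $D_{i,j}^{(n)}$ onto $\tr_{i,j}$, the composite $\pi\circ\iota$ factors as $D_{i,j}^{(n)}\xrightarrow{q}\tr_{i,j}\xrightarrow{\delta}S_1^n$. As the $i$-th and $j$-th coordinates agree on the diagonal we have $\pr_i\circ\delta=\pr_j\circ\delta$, whence $\iota^*(d_i^*-d_j^*)=q^*\delta^*(\pr_i^*d-\pr_j^*d)=0$, and therefore $D_{i,j}^{(n)}(d_i^*-d_j^*)=\iota_*(0)=0$.

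For (ii), I would first reduce to the case $k=n$. When $k<n$, Proposition~\ref{full}(a) identifies each of $D_{i,j}^{(n)},D_{j,k}^{(n)},D_{i,k}^{(n)}$ with the pullback along the forgetful map $\Pi_n$ of the corresponding divisor on $F[n-1]$; since $\Pi_n^*$ is a ring homomorphism (Remark~\ref{rmkk}), the relation descends from $F[n-1]$, so by induction on $n$ we may assume $k=n$. Now set $V=D_{j,n}^{(n)}$ with inclusion $\iota$ and compare $\iota^*D_{i,j}^{(n)}$ with $\iota^*D_{i,n}^{(n)}$. By Proposition~\ref{p-i-j}, the scheme-theoretic intersections $D_{i,j}^{(n)}\cap V$ and $D_{i,n}^{(n)}\cap V$ represent the subfunctors of tuples $(p_1,\dots,p_n)$ satisfying ``$p_n\mapsto p_j$ and $p_j\mapsto p_i$'' and ``$p_n\mapsto p_j$ and $p_n\mapsto p_i$'' respectively, all arrows being the relevant projections $\si_{B,\bullet}\to\si_{B,\bullet}$. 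Because $\si_{B,n}\to\si_{B,i}$ factors through $\si_{B,j}$, these two conditions are equivalent, so the two intersections coincide as closed subschemes of $V$. Since $V$ is integral and contained in neither $D_{i,j}^{(n)}$ nor $D_{i,n}^{(n)}$ (a generic point of $V$ satisfies $p_n\mapsto p_j$ but has no further collision), the restrictions $D_{i,j}^{(n)}|_V$ and $D_{i,n}^{(n)}|_V$ are effective Cartier divisors on $V$ with the same associated zero subscheme, hence are equal. Thus $\iota^*D_{i,j}^{(n)}=\iota^*D_{i,n}^{(n)}$, and the projection formula gives $D_{i,j}^{(n)}D_{j,n}^{(n)}=D_{i,n}^{(n)}D_{j,n}^{(n)}$.

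The main obstacle is the passage from the functor-of-points coincidence of the two intersections to the equality of the intersection \emph{products}. Rather than computing intersection multiplicities or checking transversality, I would lean on the principle that an effective Cartier divisor on an integral scheme is determined by its zero subscheme; this needs exactly that $V$ is integral and is not contained in either divisor, both of which follow from the smoothness and functor-of-points descriptions already established. A secondary point requiring care is the clean reduction to $k=n$, and in particular the compatibility of the divisor classes with the pullback maps along the tower, which is supplied by Remark~\ref{rmkk}.
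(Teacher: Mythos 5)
Your proof is correct, and while part (i) and the reduction of part (ii) to the case $k=n$ match the paper exactly (the paper also disposes of (i) by noting that $D_{i,j}^{(n)}$ maps onto the diagonal, and also reduces (ii) to $k=n$ via the product description of the divisors on $\si_{n,1}=F[n-1]\times S_1$ together with Remark~\ref{rmkk}), your treatment of the core case $k=n$ is genuinely different. The paper stays inside the Keel machinery it has already set up: it writes $\A^*(\si_{n,j+1})\cong \A^*(\si_{n,j})[D_{j,n}^{(n)}]/\langle P_{j,n}(-D_{j,n}^{(n)}),\, D_{j,n}^{(n)}\cdot\ker(\tr_{j,n}^*)\rangle$ and then shows $D_{i,j}^{(n)}-D_{i,n}^{(n)}\in\ker(\tr_{j,n}^*)$ by identifying $D_{i,j}^{(n)}\cong D_{i,j}^{(n-1)}\times_{F[j-1]}F[j]$ and $D_{i,n}^{(n)}\cong F[n-1]\times_{F[j-1]}D_{i,j}^{(j)}$ inside $\si_{n,j}\cong F[n-1]\times_{F[j-1]}F[j]$ and restricting along the truncated diagonal. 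In other words, the paper compares the two divisors on the blowup \emph{center} and lets Keel's relation $T\cdot\ker(i^*)=0$ transport the equality to the exceptional divisor, whereas you restrict directly to the exceptional divisor $D_{j,n}^{(n)}$ inside $F[n]$ and compare zero subschemes there. Your route is more elementary (it needs only the projection formula and the fact that an effective Cartier divisor on an integral scheme is determined by its zero subscheme) and avoids invoking Theorem~\ref{keel}; its cost is that it leans on two points you state only briefly: that Proposition~\ref{p-i-j} is a \emph{scheme-theoretic} (functor-of-points, valid for non-reduced $B$) characterization of $D_{i,j}^{(n)}$, so that agreement of the two subfunctors of $D_{j,n}^{(n)}$ really gives equality of closed subschemes and not merely of underlying sets, and that $D_{j,n}^{(n)}$ is integral and contained in neither $D_{i,j}^{(n)}$ nor $D_{i,n}^{(n)}$. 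Both points are consistent with how the paper itself argues (the same style of functor-of-points identification appears in the proofs of Propositions~\ref{full} and~\ref{div-sm}, and irreducibility of the exceptional divisors is established in Proposition~\ref{full}), so I consider your argument complete; the paper's version has the advantage of reusing exactly the presentation of the Chow ring that is needed anyway for Theorem~\ref{chow}.
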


\begin{proof}

(i) This is true because the natural projection $F[n]\to S_1^n$ maps $D_{i,j}^{(n)}$ surjectively onto the diagonal $\tr_{ij}$, for all $1\leq i<j\leq n$.

(ii) Intuitively, the relation holds because the left-hand side parametrizes tuples $(p_1,\dots,p_n)$ in which $p_j\mapsto p_i$ and $p_k\mapsto p_j$, while the right-hand side parametrizes tuples in which $p_k\mapsto p_i$ and $p_k\mapsto p_j$.

We first reduce the proof to the case where $k=n$. If $k<n$, we can assume inductively that a similar relation holds in the Chow ring of $F[n-1]$:
\[D_{i,j}^{(n-1)}D_{j,k}^{(n-1)}=D_{i,k}^{(n-1)}D_{j,k}^{(n-1)}.\]

Now recall that inside the variety $\si_{n,1}=F[n-1]\times S_1$, the divisors $D_{i,j}^{(n)}, D_{i,k}^{(n)}, D_{j,k}^{(n)}$ are defined to be $D_{i,j}^{(n-1)}\times S_1,D_{i,k}^{(n-1)}\times S_1,D_{j,k}^{(n-1)}\times S_1$, respectively, so relation (ii) holds inside the Chow ring $\A^*(\si_{n,1})$. By Remark~\ref{rmkk}, the same relation lifts to the Chow ring $\A^*(F[S_1,n])$.
% the projection $\si_{n,n}\to\si_{n,1}$ induces a map on Chow rings $\bold{A}^*(\si_{n,1})\to \bold{A}^*(\si_{n,n})$ that sends the classes of $D_{i,j}^{(n)}, D_{i,k}^{(n)}, D_{j,k}^{(n)}$ to the classes of the divisors with the same name inside $\si_{n,1}$, which means relation (ii) also holds in the Chow ring $\bold{A}^*(F[S_1,n])$.

We are left to show that (ii) holds when $k=n$. By Remark~\ref{rmkk}, it suffices to show it holds inside the Chow ring of $\si_{n,j+1}$. The Chow ring of $\si_{n,j+1}$ is given as follows:
\[\A^*(\si_{n,j+1})\cong \frac{\A^*(\si_{n,j})[D_{j,n}^{(n)}]}{\langle P_{j,n}(-D_{j,n}^{(n)}), D_{j,n}^{(n)}\cdot \ker(\tr_{j,n}^*)\rangle},\]
hence we need to show that: 
\[D_{i,j}^{(n)}-D_{i,n}^{(n)}\in \ker(\tr_{j,n}^*:\A^*(\si_{n,j})\to\A^*(F[n-1])).\]

%By Proposition~\ref{full}, the projection $\si_{n,n}\to\si_{n,j+1}$ induces a map on Chow rings $\bold{A}^*(\si_{n,j+1})\to\bold{A}^*(\si_{n,n})$ that sends $[D_{i,j}^{(n)}]\mapsto [D_{i,j}^{(n)}], [D_{i,n}^{(n)}]\mapsto [D_{i,n}^{(n)}], [D_{j,n}^{(n)}]\mapsto [D_{j,n}^{(n)}]$, 

Consider first the following diagram:

\[
\begin{tikzcd}
D_{i,n}^{(n)}&[-28pt]\subset\si_{n,i+1}\arrow[d]\arrow[dr, phantom, "\ulcorner", very near start]\arrow[r]&\si_{j,i+1}\arrow[d] &[-36pt]\supset D_{i,j}^{(j)}\\
&\si_{n,i}\arrow[d]\arrow[dr, phantom, "\ulcorner", very near start]\arrow[r]&\si_{j,i}\arrow[d]&\\
&F[n-1]\arrow[r]\arrow[u, "\tr_{i,n}", shift left=3ex, bend left]&F[j-1].\arrow[u, "\tr_{i,j}"', shift right=3ex, bend right]&
\end{tikzcd}
\]

The bottom square is cartesian and the section $\tr_{i,n}$ is the pullback of $\tr_{i,j}$ along the projection map $F[n-1]\to F[j-1]$, hence the exceptional divisor $D_{i,n}^{(n)}\subset \si_{n,i+1}$ is the pullback of the exceptional divisor $D_{i,j}^{(j)}\subset \si_{j,i+1}$. In other words, $D_{i,n}^{(n)}\cong F[n-1]\times_{F[j-1]}D_{i,j}^{(j)}$ inside $\si_{n,i+1}$. This relation lifts to $\si_{n,j}$. On the other hand, we have that $\si_{n,j}\cong F[n-1]\times_{F[j-1]}F[j]$. By Proposition~\ref{full}, under this isomorphism, $D_{i,j}^{(n)}\cong D_{i,j}^{(n-1)}\times_{F[j-1]}F[j]$. 

%Next, we claim that $D_{i,n}^{(n)}\cong F[n-1]\times_{F[j-1]}D_{i,j}^{(j)}$. To see this, consider the following diagram:

%To see that this isomorphism holds inside $\si_{n,j}$, we first need to check it on $\si_{n,i+1}$. For this, consider the following diagram:

%\[
%\begin{tikzcd}
%D_{i,n}^{(n)}&[-28pt]\subset\si_{n,i+1}\arrow[d]\arrow[dr, phantom, "\ulcorner", very near start]\arrow[r]&\si_{j,i+1}\arrow[d] &[-36pt]\supset D_{i,j}^{(j)}\\
%&\si_{n,i}\arrow[d]\arrow[dr, phantom, "\ulcorner", very near start]\arrow[r]&\si_{j,i}\arrow[d]&\\
%&F[n-1]\arrow[r]\arrow[u, "\tr_{i,n}", shift left=3ex, bend left]&F[j-1].\arrow[u, "\tr_{i,j}"', shift right=3ex, bend right]&
%\end{tikzcd}
%\]

%The bottom square is cartesian and the section $\tr_{i,n}$ is the pullback of $\tr_{i,j}$ along the map $F[n-1]\to F[j-1]$, therefore the exceptional divisor $D_{i,n}^{(n)}\subset \si_{n,i+1}$ is the pullback of the exceptional divisor $D_{i,j}^{(j)}\subset \si_{j,i+1}$, so $D_{i,n}^{(n)}\cong F[n-1]\times_{F[j-1]}D_{i,j}^{(j)}$ inside $\si_{n,i+1}$. This relation lifts to $\si_{n,j}$.

In conclusion, inside $\si_{n,j}\cong F[n-1]\times_{F[j-1]}F[j]$, we have $D_{i,j}^{(n)}\cong D_{i,j}^{(n-1)}\times_{F[j-1]}F[j]$ and $D_{i,n}^{(n)}\cong F[n-1]\times_{F[j-1]}D_{i,j}^{(j)}$. Additionally, the morphism $\tr_{j,n}:F[n-1]\to \si_{n,j}=F[n-1]\times_{F[j-1]}F[j]$ acts like a `truncated` diagonal embedding, so the proof is complete:
%Putting all this information together, it becomes clear that:
\[D_{i,j}^{(n)}-D_{i,n}^{(n)}\in \ker(\tr_{j,n}^*:\A^*(\si_{n,j})\to\A^*(F[n-1])).\qedhere\]
%and the proof is complete.
\end{proof}

\section{The Chow ring of the moduli scheme for rational surfaces}
\label{chow-ring-rational}

As a special case of the theory developed above, we give a precise description of the Chow ring $\A^*(F[n])$ when the base surface $S_1$ is a smooth projective rational surface over the complex numbers ($\Spec R=\Spec \mathbb{C}$). The result relies on a few key ideas. First, the canonical map $\text{cl}:\A^*(F[n])\to\mathbb{H}^{2*}(F[n])$ is an isomorphism. Second, for some prime $p$ and any $q=p^l$, where $l\gg 0$, we can define the moduli space $F[n]\otimes \mathbb{F}_q$ over the finite field $\mathbb{F}_q$. The number of $\mathbb{F}_q$-points on $F[n]\otimes \mathbb{F}_q$ is given by a polynomial $R_n(q)$ that coincides with the Poincare polynomial of $F[n]$. We use this fact to derive precise formulas for the Betti numbers of $F[n]$; using these formulas, we show that the relations in Proposition~\ref{relations} are enough to give a complete description of $\A^*(F[n])$.

\begin{definition}
A scheme $X$ of characteristic zero is called an HI (for Homology Isomorphism) scheme if the canonical map from the Chow groups of $X$ to the homology groups is an isomorphism:
\[\A_*(X)\xrightarrow[\text{cl}]{\cong}\mathbb{H}_{2*}(X).\]
\end{definition}

\begin{proposition}\label{HI}
Let $Y$ be a variety and $i:X\hookrightarrow Y$ a regularly embedded subvariety. Let $\tilde{Y}$ be the blowup of $Y$ along $X$. If $X$ and $Y$ are HI schemes, then so is $\tilde{Y}$.
\end{proposition}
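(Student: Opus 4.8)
The plan is to deduce the statement from the five lemma, applied to the blow-up exact sequences in Chow theory and in Borel--Moore homology, which are linked by the cycle class map $\mathrm{cl}$. Set $d=\mathrm{codim}(X,Y)$, let $E=\mathbb{P}(N_XY)\subset\tilde{Y}$ be the exceptional divisor, let $g:E\to X$ be the (projective bundle) projection, $j:E\hookrightarrow\tilde{Y}$ the inclusion, and $\pi:\tilde{Y}\to Y$ the blow-down. The key structural inputs are that $\A_*$ and $\mathbb{H}_{2*}$ obey parallel blow-up formulas and that $\mathrm{cl}$ is compatible with the operations appearing in them; the HI hypotheses on $X$ and $Y$ then propagate to $\tilde{Y}$ by a diagram chase.

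First I would show that $E$ is itself HI. Since $g:E\to X$ is a $\mathbb{P}^{d-1}$-bundle, the projective bundle formula gives, compatibly in both theories, decompositions $\A_*(E)=\bigoplus_{h=0}^{d-1}\xi^h\cap g^*\A_*(X)$ and $\mathbb{H}_{2*}(E)=\bigoplus_{h=0}^{d-1}\xi^h\cap g^*\mathbb{H}_{2*}(X)$, where $\xi=c_1(\mathcal{O}_E(1))$. Because the cycle class map commutes with flat pullback $g^*$ and with the Chern class operation $\cap\,\xi$ (see~\cite{fulton}, Ch.~19), $\mathrm{cl}_E$ is identified with a direct sum of copies of $\mathrm{cl}_X$, each an isomorphism by hypothesis; hence $\mathrm{cl}_E$ is an isomorphism and $E$ is HI.

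Next I would invoke the blow-up exact sequence. For a blowup along a regular embedding one has, for each $k$, a short exact sequence
\[0\to\A_k(E)\xrightarrow{(g_*,\,-j_*)}\A_k(X)\oplus\A_k(\tilde{Y})\xrightarrow{i_*+\pi_*}\A_k(Y)\to 0\]
(see~\cite{fulton}, \S6.7), and the identical sequence with $\A_k$ replaced by $\mathbb{H}_{2k}$ holds by the topological blow-up formula. The four maps $g,j,i,\pi$ are all proper (closed immersions, a bundle projection, and the projective blow-down), so $\mathrm{cl}$ commutes with each of the pushforwards appearing above; consequently $\mathrm{cl}$ assembles into a commutative ladder between these two short exact sequences, with left, middle, and right vertical arrows $\mathrm{cl}_E$, $\mathrm{cl}_X\oplus\mathrm{cl}_{\tilde{Y}}$, and $\mathrm{cl}_Y$. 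By the previous step $\mathrm{cl}_E$ is an isomorphism, and $\mathrm{cl}_Y$ is an isomorphism by hypothesis, so the short five lemma forces the middle arrow $\mathrm{cl}_X\oplus\mathrm{cl}_{\tilde{Y}}$ to be an isomorphism. As this arrow is block-diagonal, each summand is an isomorphism; in particular $\mathrm{cl}_{\tilde{Y}}:\A_*(\tilde{Y})\to\mathbb{H}_{2*}(\tilde{Y})$ is an isomorphism, i.e.\ $\tilde{Y}$ is HI.

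The main obstacle is not the homological algebra, which is routine once the ladder is in place, but verifying that the two blow-up sequences are genuinely compatible under $\mathrm{cl}$: one must check that both are built from the \emph{same} geometric operations (the proper pushforwards along $g,j,i,\pi$) and that $\mathrm{cl}$ commutes with each of them, which is exactly the content of the comparison results in~\cite{fulton}, Ch.~19. A secondary point requiring the full hypothesis is left-exactness of the sequences (injectivity of $(g_*,-j_*)$); this fails for general blow-ups and is where the assumption that $i$ is a \emph{regular} embedding is used, being precisely what makes the sequence of \S6.7 short exact.
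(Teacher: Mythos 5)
Your argument is correct and coincides with the proof of the result the paper actually invokes: the paper establishes Proposition~\ref{HI} only by citing Keel's Appendix, Theorem~2, and that proof is precisely your strategy --- first show the exceptional divisor is HI via the compatible projective bundle formulas, then apply the five lemma to the cycle-class ladder between the Chow-group and Borel--Moore-homology blow-up exact sequences of a regular embedding. Nothing further is needed.
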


\begin{proof}
See~\cite{keel}, Appendix, Theorem 2.
\end{proof}

\begin{proposition}\label{fn-hi}
Let $S_1$ be a smooth projective variety over an algebraically closed field $k$ of characteristic zero. If $S_1^n$ is an HI scheme, for all $n\geq 0$, then so is $F[n]$.
\end{proposition}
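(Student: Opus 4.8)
The plan is to prove the stronger statement that $F[m]\times S_1^k$ is an HI scheme for every $m\ge 0$ and every $k\ge 0$, by induction on $m$. This strengthening is essentially forced: the blowup construction of $F[n]$ out of $S_1^n$ (recorded in the proof of Theorem~\ref{chow}) has blowup centers that are not copies of $F[j-1]$ alone, but products $F[j-1]\times S_1^{n-j}$, so in order to feed Keel's criterion we must already know that such products are HI. The engine of the argument is Proposition~\ref{HI} — blowing up an HI variety along a regularly embedded HI subvariety again yields an HI variety — applied repeatedly up the tower of blowups that realizes $F[n]$.

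For the base case $m=0$ we have $F[0]\cong\Spec k$ by Example~\ref{base-case}, so $F[0]\times S_1^k\cong S_1^k$, which is HI by hypothesis. For the inductive step, fix $n\ge 1$, assume $F[m]\times S_1^k$ is HI for all $m<n$ and all $k\ge 0$, and fix $k\ge 0$. Taking the product of the tower
\[F[n]=\si_{n,n}\to\dots\to\si_{j,i+1}\times S_1^{n-j}\to\si_{j,i}\times S_1^{n-j}\to\dots\to S_1^n\]
from the proof of Theorem~\ref{chow} with $S_1^k$, I obtain a tower
\[F[n]\times S_1^k\to\dots\to\si_{j,i+1}\times S_1^{n-j+k}\to\si_{j,i}\times S_1^{n-j+k}\to\dots\to S_1^{n+k}\]
whose bottom is $S_1^{n+k}$ and in which each arrow is the blowup of $\si_{j,i}\times S_1^{n-j+k}$ along the center $\tr_{i,j}(F[j-1])\times S_1^{n-j+k}\cong F[j-1]\times S_1^{n-j+k}$. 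Here I use that blowing up commutes with taking the product by $S_1^k$ when the center is pulled back from the first factor, and that $\tr_{i,j}$ is a regular embedding by Theorem~\ref{main2}.

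Now I climb this tower, applying Proposition~\ref{HI} at each step. The bottom $S_1^{n+k}$ is HI by hypothesis. At the blowup indexed by $(i,j)$, the center is $F[j-1]\times S_1^{n-j+k}$, which is HI by the inductive hypothesis since $j-1<n$; the ambient variety is the previous stage of the tower, which is HI by the preceding applications of Proposition~\ref{HI}. Each center is regularly embedded, and every scheme in sight is a variety — smooth and projective by Theorem~\ref{main2}, and irreducible because it is obtained from the irreducible $S_1^{n+k}$ by blowups along irreducible centers — so Proposition~\ref{HI} applies verbatim. Iterating to the top gives that $F[n]\times S_1^k$ is HI, and specializing to $k=0$ yields the proposition.

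The only genuine subtlety, and the point that dictates the shape of the argument, is that the HI property is not a priori preserved under products, so one cannot deduce that the center $F[j-1]\times S_1^{n-j}$ is HI merely from knowing that $F[j-1]$ is HI. Carrying the extra factor $S_1^k$ through the entire induction is precisely what sidesteps this obstacle: the centers that arise are always of the form $F[j-1]\times S_1^{(\cdot)}$ with $j-1<n$, and the strengthened hypothesis is tailored to cover exactly these. The remaining ingredients — regularity of the centers and irreducibility of the ambient spaces — are already supplied by Theorem~\ref{main2} and the blowup construction, so no further work is needed there.
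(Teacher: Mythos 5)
Your proof is correct and follows essentially the same strategy as the paper: strengthen the statement to ``$F[m]\times S_1^k$ is HI for all $k$,'' induct on $m$, and repeatedly apply Proposition~\ref{HI} up a tower of blowups with regularly embedded centers covered by the inductive hypothesis. The only (immaterial) difference is that you climb the full tower from $S_1^{n+k}$ with centers $F[j-1]\times S_1^{n-j+k}$, whereas the paper climbs the shorter tower from $\si_{n+1,1}=F[n]\times S_1$ with all centers equal to $F[n]$; your version also makes explicit the ``similar figure'' the paper invokes for the extra $S_1^k$ factor.
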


\begin{proof}
We prove inductively over $n$ something stronger: $\forall n,j\geq 0$, the variety $F[n]\times S_1^j$ is an HI scheme. The base cases follow immediately, since $F[0]\cong \Spec k$ and $F[1]\cong S_1$.

For the induction step, assume that $F[n]\times S_1^j$ is an HI scheme, for all $j\geq 0$. Recall that we can obtain $F[n+1]$ from $F[n]\times S_1$ as a series of blowups:

\[\begin{tikzcd}[column sep=small]
F[n+1]=\si_{n+1,n+1}\arrow[r] &\si_{n+1,n}\arrow[r] &\dots\arrow[r] &\si_{n+1,2}\arrow[r] &\si_{n+1,1}=F[n]\times S_1\arrow[d]\\
 &&&&F[n].\arrow[u, "\tr_{1,n+1}", shift left=1ex, bend left]\arrow[ul, "\tr_{2,n+1}", bend left]\arrow[ulll, "\tr_{n,n+1}", bend left]
\end{tikzcd}\]

By the induction hypothesis, $F[n]$ and $F[n]\times S_1$ are HI varieties. By Theorem~\ref{main2}, the blowup locus $\tr_{i,n+1}:F[n]\hookrightarrow \si_{n+1,i}$ is a regular embedding, for all $1\leq i\leq n$. Applying Proposition~\ref{HI} repeatedly, we conclude step by step that $\si_{n+1,2},\si_{n+1,3},\dots, \si_{n+1,n+1}\cong F[n+1]$ are all HI varieties. More generally, we conclude that $F[n+1]\times S_1^j$ is an HI variety by considering a similar figure to the one above.\qedhere

%\[\begin{tikzcd}[column sep=tiny]
%\si_{n+1,n+1}\times S_1^j\arrow[r] &\si_{n+1,n}\times S_1^j\arrow[r] &\dots\arrow[r] &\si_{n+1,2}\times S_1^j\arrow[r] &\si_{n+1,1}\times S_1^j=F[n]\times S_1\times S_1^j\arrow[d]\\
% &&&&F[n]\times S_1^j.\arrow[u, shift left=1ex, bend left]\arrow[ul, bend left]\arrow[ulll, bend left]
%\end{tikzcd}\qedhere\]
\end{proof}

\begin{proposition}\label{rational-hi}
Let $k \geq 1$ and $S_1,\dots, S_k$ be complex smooth projective rational surfaces. Then $\prod_{i=1}^k S_i$ is an HI variety.
\end{proposition}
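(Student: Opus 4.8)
The plan is to induct on the total second Betti number $N=\sum_{i=1}^{k}b_2(S_i)$, peeling off one blowup at a time via Castelnuovo's contraction theorem and propagating the HI property through each blowup with Proposition~\ref{HI}. Recall that a smooth projective rational surface is either minimal---isomorphic to $\mathbb{P}^2$ or to a Hirzebruch surface---or else contains a $(-1)$-curve whose contraction yields a smooth projective rational surface of second Betti number one less. Consequently every $S_i$ is obtained from a minimal rational surface by finitely many point blowups, and the quantity $N$ is a legitimate measure of complexity for the induction.

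For the base of the induction I would treat the case in which every factor $S_i$ is minimal. Each such factor is a smooth projective toric surface, so it admits an affine paving; the product of finitely many varieties with affine pavings again has an affine paving (take products of cells). A smooth projective variety with an affine paving is an HI scheme, since its Chow groups are freely generated by the closures of the cells and the cycle class map is an isomorphism onto $\mathbb{H}_{2*}$ (see~\cite{fulton}, Example~19.1.11). This also disposes of the degenerate case $k=0$, where the empty product is $\Spec \mathbb{C}$.

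For the induction step, suppose some factor, say $S_k$, is not minimal, and write $S_k=\Bl_p S_k'$ for a point $p\in S_k'$ with $S_k'$ a rational surface satisfying $b_2(S_k')=b_2(S_k)-1$. Set $W=S_1\times\cdots\times S_{k-1}$. The key geometric point is that blowing up a single factor commutes with forming the product: since the projection $W\times S_k'\to S_k'$ is flat, blowup commutes with this flat base change (\cite{stacks}), giving
\[\prod_{i=1}^{k}S_i \;=\; W\times \Bl_p S_k' \;\cong\; \Bl_{W\times\{p\}}\bigl(W\times S_k'\bigr).\]
Writing $Y=W\times S_k'$ and $Z=W\times\{p\}\cong W$, we see that $Z$ is a smooth subvariety of the smooth variety $Y$, hence regularly embedded, and that $\prod_i S_i$ is exactly the blowup of $Y$ along $Z$.

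Finally I would apply the inductive hypothesis to both $Y$ and $Z$ and conclude with Proposition~\ref{HI}. The ambient $Y=S_1\times\cdots\times S_{k-1}\times S_k'$ is a product of $k$ rational surfaces with $\sum b_2 = N-1<N$, so it is HI by induction; the center $Z\cong S_1\times\cdots\times S_{k-1}$ is a product of $k-1$ rational surfaces with $\sum b_2 = N-b_2(S_k)<N$, so it too is HI by induction. Proposition~\ref{HI} then gives that $\Bl_Z Y\cong \prod_i S_i$ is HI, completing the induction. The main obstacle is the flat-base-change identification $W\times \Bl_p S_k'\cong \Bl_{W\times\{p\}}(W\times S_k')$, which converts the problem of blowing up one coordinate into a genuine blowup of the whole product along a regularly embedded center; once this is established, the remaining work is the bookkeeping showing that both $Y$ and $Z$ carry a strictly smaller value of $N$, so that the induction is well founded.
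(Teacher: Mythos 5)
Your proof is correct and follows essentially the same route as the paper: both realize the product as an iterated blowup of a product of minimal (toric, hence affine-paved) rational surfaces and propagate the HI property with Proposition~\ref{HI}, the blowup centers being products of fewer or simpler rational surfaces handled by the inductive hypothesis. The only difference is bookkeeping --- you induct on $\sum_i b_2(S_i)$ and make the flat-base-change identification $W\times \Bl_p S_k'\cong \Bl_{W\times\{p\}}(W\times S_k')$ explicit, whereas the paper inducts on the number of factors and leaves that identification implicit.
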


\begin{proof}
We prove this statement inductively over $k$. Let $S$ be a complex smooth projective rational surface. By the Enriques-Kodaira classification of complex surfaces (see\cite{barth-peters}, Part VI), there exist smooth projective surfaces $S_{n-1},\dots, S_1,S_0$, and morphisms $S=S_n\to S_{n-1}\to\dots\to S_1\to S_0$, such that each $S_{i+i}\to S_i$ is the contractions of a ($-1$)-curve and $S_0$ is a minimal rational surface (either $\mathbb{P}^2$ or the Hirzebruch surface $\mathbb{F}_a$, for $a=0$ or $a\geq 2$). Both $\mathbb{P}^2$ and $\mathbb{F}_a$ have algebraic cell decompositions (since they are toric), which means they are HI varieties. We apply Proposition~\ref{HI} repeatedly, obtaining step by step that $S_1,S_2,\dots, S_n=S$ are HI varieties, since each of them is obtained by blowing up a smooth HI surface at a smooth point.

Inductively, let $S_1,\dots,S_k$ be complex smooth projective rational surfaces. As before, each surface $S_i$ is obtained by blowing up a minimal rational surface $S_{i,0}$. The product $\prod_{i=1}^k S_{i}$ is thus obtained through series of blowups of the base product $\prod_{i=1}^k S_{i,0}$, which is an HI variety because it admits a cell decomposition. Every blowup locus is, by the induction hypothesis, an HI variety. We apply again Proposition~\ref{HI} repeatedly and conclude that every variety in the sequence of blowups is HI, finishing the proof.\qedhere
\end{proof}

%\begin{remark}
%In the proof above, we used that the Hirzebruch surface $\mathbb{F}_a$ has an algebraic cell decomposition. One way to see this is to note that $\mathbb{F}_a$ is a toric variety, and all toric varieties admit a cell decomposition.
%\end{remark}

\begin{corollary}
Let $S_1$ be a complex smooth projective rational surface and $F[S_1,n]=F[n]$ its associated moduli variety. There exists a canonical isomorphism:
\[\A^*(F[n])\xrightarrow{\cong}\mathbb{H}^{2*}(F[n]).\]
\end{corollary}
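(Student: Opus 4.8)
The plan is to read this off almost immediately from the two preceding propositions, with the only real content being the passage from the homological formulation of the HI condition to the cohomological statement asserted here. First I would invoke Proposition~\ref{rational-hi} in the special case $S_1=\dots=S_n$: since $S_1$ is a complex smooth projective rational surface, the $n$-fold product $S_1^n$ is an HI variety, and this holds for every $n\geq 0$ (with the convention $S_1^0=\Spec\C$, which is trivially HI). Thus the hypothesis of Proposition~\ref{fn-hi} is satisfied.

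Next I would apply Proposition~\ref{fn-hi} directly: because $S_1^n$ is HI for all $n$, the moduli variety $F[n]$ is itself an HI variety, meaning the canonical cycle-class map
\[
\A_*(F[n])\xrightarrow[\text{cl}]{\cong}\mathbb{H}_{2*}(F[n])
\]
is an isomorphism. This is the homological form of the assertion, dimension by dimension.

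Finally I would translate this into the stated cohomological isomorphism. Here is where the smoothness and projectivity of $F[n]$ (established in Theorem~\ref{main2}) do the work. Since $F[n]$ is a smooth projective variety, its bivariant ring $\A^*(F[n])$ agrees with its Chow ring, and Poincaré duality identifies $\A_*(F[n])$ with $\A^*(F[n])$ and $\mathbb{H}_{2*}(F[n])$ with $\mathbb{H}^{2*}(F[n])$, in each case shifting degree by the (complex) dimension of $F[n]$. The cycle-class map is compatible with these dualities, so the homological isomorphism above dualizes to the desired
\[
\A^*(F[n])\xrightarrow{\cong}\mathbb{H}^{2*}(F[n]).
\]

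I do not expect a genuine obstacle, as the substantive arguments are already packaged in Propositions~\ref{HI}, \ref{fn-hi}, and~\ref{rational-hi}; the one point requiring care is simply verifying that the HI property, defined via Chow homology and Borel–Moore (or singular) homology, transposes cleanly to the Chow ring and singular cohomology under Poincaré duality on the smooth projective variety $F[n]$, together with the standard identification of the bivariant ring $\A^*$ with the Chow ring in the smooth case (as noted in the remark following Theorem~\ref{keel}).
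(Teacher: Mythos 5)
Your proposal is correct and follows exactly the paper's route: the paper's proof is the one-line observation that the corollary is immediate from Propositions~\ref{fn-hi} and~\ref{rational-hi}, which is precisely the chain you set up. Your additional care in passing from the homological HI condition to the cohomological statement via Poincar\'e duality on the smooth projective variety $F[n]$ is a reasonable elaboration of a step the paper leaves implicit.
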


\begin{proof}
This is an immediate result of Proposition~\ref{fn-hi} and Proposition~\ref{rational-hi}.
\end{proof}

\begin{setup}\label{setup}
Let $S$ be a complex surface. There exists $R\subset \C$ a finitely generated $\Z$-algebra such that $S$ is defined over $R$, i.e. there exists a surface $S_R$ over $\Spec R$ such that the following square is cartesian:
\[
\begin{tikzcd}
S\arrow[d]\arrow[dr, phantom, "\ulcorner", near start]\arrow[r]&S_R\arrow[d]\\
\Spec \C\arrow[r]&\Spec R.
\end{tikzcd}
\]

For any $m\subset R$ maximal ideal, the field $\kappa(m)=R/m$ is finite, so there exists some prime $p$ and $q=p^l$, where $l\gg 0$, such that $\kappa(m)\subseteq \mathbb{F}_q$. We obtain the following figure:
\[
\begin{tikzcd}
S\arrow[d]\arrow[dr, phantom, "\ulcorner", near start]\arrow[r]&S_R\arrow[d]&S_{\kappa(m)}\arrow[dl, phantom, "\urcorner", very near start]\arrow[l]\arrow[d]&S_{\mathbb{F}_q}\arrow[dl, phantom, "\urcorner", near start]\arrow[l]\arrow[d]&S_{\mathbb{F}_{\overline{q}}}\arrow[dl, phantom, "\urcorner", near start]\arrow[l]\arrow[d]\\
\Spec \C\arrow[r]&\Spec R&\Spec \kappa(m)\arrow[l]&\Spec \mathbb{F}_q\arrow[l]&\Spec \mathbb{F}_{\overline{q}}\arrow[l].
\end{tikzcd}
\]

For the rest of this section, when we say a complex surface $S$ can be defined as a surface $S_{\mathbb{F}_q}$ over a finite field $\mathbb{F}_q$, it means we do a procedure as above.

\end{setup}

\begin{proposition}\label{r(t)}
Let $S$ be a complex smooth projective rational surface. There exists a prime integer $p$ and $q=p^l$, for some $l\gg 0$, such that $S$ can be defined as a smooth projective rational surface $S_{\mathbb{F}_q}$ over $\Spec \mathbb{F}_q$. For this choice of $p$ and $q$, there exists a quadratic polynomial $r(t)$ with the property that, for any $a\geq 1$ and $q'=q^a$, the number of $\mathbb{F}_{q'}$-points on $S_{\mathbb{F}_{q'}}$ equals $r(q')$.
\end{proposition}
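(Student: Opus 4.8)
The plan is to realize $S$ concretely as an iterated blowup of a minimal rational surface, spread the whole configuration out over a finitely generated $\Z$-algebra, reduce modulo a well-chosen maximal ideal, and then count points one blowup at a time.

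First I would apply the Enriques--Kodaira classification, exactly as in the proof of Proposition~\ref{rational-hi}, to produce a tower
\[S=S_N\to S_{N-1}\to\dots\to S_1\to S_0,\]
where $S_0$ is a minimal rational surface ($S_0\cong\P^2$ or $S_0\cong\mathbb{F}_a$) and each $S_{i+1}\to S_i$ is the blowup of $S_i$ at a closed point $p_i$ with residue field $\C$. Following Setup~\ref{setup}, I would then enlarge $R$ so that $S_0$, the entire tower, all of the blowup morphisms, and each center $p_i$ --- viewed as a section of the structure map $S_{i,R}\to\Spec R$ --- are defined over $R$. Smoothness and projectivity are open conditions on $\Spec R$, hence persist after inverting finitely many elements of $R$; choosing a maximal ideal $m$ in this good locus and $l\gg 0$ so that $\kappa(m)\subseteq\mathbb{F}_q$ yields a reduction $S_{\mathbb{F}_q}$ that is still a tower of blowups of the reduction of $S_0$. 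Since $\P^2$ and $\mathbb{F}_a$ reduce to $\P^2_{\mathbb{F}_q}$ and $\mathbb{F}_{a,\mathbb{F}_q}$, which are rational over $\mathbb{F}_q$, the surface $S_{\mathbb{F}_q}$ is again smooth, projective, and rational. The decisive feature is that each $p_i$, being a section over $\Spec R$, reduces to a section over $\Spec\kappa(m)$ and therefore becomes an $\mathbb{F}_q$-rational point of $S_{i,\mathbb{F}_q}$.

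With this model in hand the count is a short recursion. Base changing to $\mathbb{F}_{q'}$ with $q'=q^a$ keeps every center $\mathbb{F}_{q'}$-rational, and the minimal models contribute $|\P^2(\mathbb{F}_{q'})|=1+q'+q'^2$ or, since $\mathbb{F}_a$ is a Zariski-locally trivial $\P^1$-bundle over $\P^1$, $|\mathbb{F}_a(\mathbb{F}_{q'})|=(1+q')^2$. Each subsequent blowup at an $\mathbb{F}_{q'}$-rational point of a smooth surface deletes one point and inserts the $q'+1$ points of the exceptional $\P^1$, so it changes the count by exactly $+q'$. Summing over the tower I obtain
\[|S_{\mathbb{F}_{q'}}(\mathbb{F}_{q'})|=q'^2+c\,q'+1,\qquad c=\begin{cases}N+1,&S_0\cong\P^2,\\ N+2,&S_0\cong\mathbb{F}_a,\end{cases}\]
so that $r(t)=t^2+ct+1$ is the desired quadratic polynomial; note in particular that it is the \emph{same} polynomial for every exponent $a$, which is precisely what the statement demands.

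The main obstacle is the spreading-out step rather than the counting. I must guarantee that after reduction the surface remains smooth, projective, and rational, and above all that the blowup centers remain honest rational points, since the entire additive ``$+q'$'' bookkeeping hinges on this. Packaging the full blowup tower together with its centers-as-sections into a single model over $R$, and then using openness of smoothness to select $m$, is what makes the argument go through; once that is arranged, the finite-field point counts for $\P^2$ and $\mathbb{F}_a$ and the blowup recursion are entirely elementary.
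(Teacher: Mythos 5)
Your proposal is correct and follows essentially the same route as the paper: realize $S$ as an iterated blowup of a minimal rational surface via the Enriques--Kodaira classification, spread the tower out over a finitely generated $\Z$-algebra as in Setup~\ref{setup}, reduce modulo a suitable maximal ideal, and count points by adding $q'$ for each blowup, arriving at the same polynomials $t^2+(N+1)t+1$ and $t^2+(N+2)t+1$. Your added care about keeping the blowup centers as sections over $\Spec R$ so that they reduce to $\mathbb{F}_q$-rational points is a welcome sharpening of a step the paper treats implicitly.
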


\begin{proof}
Let $S$ be a complex smooth projective rational surface. By the Enriques-Kodaira classification of surfaces, there exist complex smooth projective surfaces $S_{n-1},\dots, S_1,S_0$, and a sequence of morphisms $S=S_n\to S_{n-1}\to\dots\to S_1\to S_0\to\Spec \C$, such that each $S_{i+i}\to S_i$ is the contractions of a ($-1$)-curve and $S_0$ is a minimal rational surface. As in the Setup~\ref{setup} above, we can find $R\subset \C$ a finitely generated $\Z$-algebra such that all the surfaces $S_i$ are defined over $\Spec R$. Moreover, we can pick $R$ in such a way that, if $S_0$ is $\P^2$ or $\mathbb{F}_a$, then the surface $S_{0,R}$ is either $\mathbb{P}^2_{R}$ or $\mathbb{F}_{a,R}$, respectively:
\[
\begin{tikzcd}[row sep=small, column sep=small]
S_n\arrow[d]\arrow[dr, phantom, "\ulcorner", very near start]\arrow[r]&S_{n,R}\arrow[d]&S_{n,\kappa(m)}\arrow[dl, phantom, "\urcorner", very near start]\arrow[l]\arrow[d]&S_{n,\mathbb{F}_q}\arrow[dl, phantom, "\urcorner", very near start]\arrow[l]\arrow[d]&S_{n,\mathbb{F}_{q'}}\arrow[dl, phantom, "\urcorner", very near start]\arrow[l]\arrow[d]\\
\vdots\arrow[d]\arrow[r]&\vdots\arrow[d]&\vdots\arrow[l]\arrow[d]&\vdots\arrow[l]\arrow[d]&\vdots\arrow[l]\arrow[d]\\
S_1\arrow[d]\arrow[dr, phantom, "\ulcorner", very near start]\arrow[r]&S_{1,R}\arrow[d]&S_{1,\kappa(m)}\arrow[dl, phantom, "\urcorner", very near start]\arrow[l]\arrow[d]&S_{1,\mathbb{F}_q}\arrow[dl, phantom, "\urcorner", very near start]\arrow[l]\arrow[d]&S_{1,\mathbb{F}_{q'}}\arrow[dl, phantom, "\urcorner", very near start]\arrow[l]\arrow[d]\\
S_0\arrow[d]\arrow[dr, phantom, "\ulcorner", very near start]\arrow[r]&S_{0,R}\arrow[d]&S_{0,\kappa(m)}\arrow[dl, phantom, "\urcorner", very near start]\arrow[l]\arrow[d]&S_{0,\mathbb{F}_q}\arrow[dl, phantom, "\urcorner", very near start]\arrow[l]\arrow[d]&S_{0,\mathbb{F}_{q'}}\arrow[dl, phantom, "\urcorner", very near start]\arrow[l]\arrow[d]\\
\Spec \C\arrow[r]&\Spec R&\Spec \kappa(m)\arrow[l]&\Spec \mathbb{F}_q\arrow[l]&\Spec \mathbb{F}_{q'}\arrow[l].
\end{tikzcd}
\]

%If $S_{0,R}$ is $\mathbb{P}^2_R$, then the number of $\mathbb{F}_{q'}$-points on $S_{0,\mathbb{F}_{q'}}$ is $(q')^2+q'+1$. If $S_{0,R}$ is $\mathbb{F}_{a,R}$, then the number of $\mathbb{F}_{q'}$-points on $S_{0,\mathbb{F}_{q'}}$ is $(q')^2+2q'+1$. 

In every blowup $S_{i+1}\to S_i$, we replace one smooth point of $S_i$ with a copy of $\mathbb{P}^1$, so the number of $\mathbb{F}_{q'}$-points on $S_{\mathbb{F}_{q'}}$ is given by a polynomial $r(q')$ that satisfies:
\[r(q')=\begin{cases}(q')^2+(n+1)q'+1,\text{ if }S_0=\mathbb{P}^2,\\
(q')^2+(n+2)q'+1,\text{ if }S_0=\mathbb{F}_a.\end{cases}\]\qedhere
\end{proof}

\begin{proposition}\label{formula}
Let $S_1$ be a complex smooth projective rational surface. Let $p$, $q$, and $r(t)$ be defined as in Proposition~\ref{r(t)}.For any $a\geq 1$ and $q'=q^a$, the number of $\mathbb{F}_{q'}$-points on the moduli space $F[n]_{\mathbb{F}_{q'}}=F[S_{1,\mathbb{F}_{q'}},n]$ is given by the following polynomial $R_n(q)$:
\[R_n(q')=\prod_{i=0}^{n-1}(r(q')+iq').\]
%Let $p$ be a prime integer and $q=p^l$, as in Proposition~\ref{r(t)}. Let $r(t)$ be the corresponding quadratic polynomial. For any $a\geq 1$ and $q'=q^a$, the number of $\mathbb{F}_{q'}$-points on the moduli space $F[n]_{\mathbb{F}_{q'}}=F[S_{1,\mathbb{F}_{q'}},n]$ is given by the following polynomial $R_n(q)$:
%\[R_n(q')=\prod_{i=0}^{n-1}(r(q')+iq').\]
\end{proposition}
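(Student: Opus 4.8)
The plan is to argue by induction on $n$, exploiting the forgetful fibration $\Pi_n : F[n] \to F[n-1]$ furnished by Theorem~\ref{main2}(c) and computing the cardinality of its fibers. The base case is immediate: $F[0] \cong \Spec \mathbb{F}_{q'}$ contributes the empty product $R_0(q') = 1$, while the isomorphism $F[1] \cong S_1$ together with Proposition~\ref{r(t)} gives $\#F[1](\mathbb{F}_{q'}) = r(q') = R_1(q')$.

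For the inductive step I would use the elementary fact that $\mathbb{F}_{q'}$-points fiber over the structure map: to give a morphism $\Spec \mathbb{F}_{q'} \to F[n]$ is the same as giving an $\mathbb{F}_{q'}$-point $x$ of $F[n-1]$ together with an $\mathbb{F}_{q'}$-point of the scheme-theoretic fiber $\Pi_n^{-1}(x)$. Hence
\[\#F[n](\mathbb{F}_{q'}) = \sum_{x \in F[n-1](\mathbb{F}_{q'})} \#\,\Pi_n^{-1}(x)(\mathbb{F}_{q'}).\]
Under the identification $F[n] \cong \si_{n,n}$, the map $\Pi_n = \pi_{n,1}\circ\dots\circ\pi_{n,n}$ is the structural map of the universal family over $F[n-1]$, so by the Remark following the definition of $\FF[n]$ the fiber over a point $x = (p_1,\dots,p_{n-1})$ is exactly the top surface $S_n$ in the tower $S_n \to \cdots \to S_1$ that $x$ classifies.

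The crux is then to compute $\#S_n(\mathbb{F}_{q'})$ and, crucially, to observe that it is independent of $x$. Since $x$ is an $\mathbb{F}_{q'}$-point, the entire tower and each blown-up center $p_i$ are defined over $\mathbb{F}_{q'}$; in particular every $p_i$ is an $\mathbb{F}_{q'}$-rational point of the smooth surface $S_i$. Blowing up a smooth surface at a rational point deletes one point and inserts an exceptional $\P^1$, changing the count of rational points by $-1 + (q'+1) = q'$. Because $S_1 = S_{1,\mathbb{F}_{q'}}$ has $r(q')$ points by Proposition~\ref{r(t)}, and $S_n$ is obtained from $S_1$ by $n-1$ such blowups, we obtain the value
\[\#S_n(\mathbb{F}_{q'}) = r(q') + (n-1)q',\]
independent of which centers were chosen (this remains valid when centers are infinitely near, since each is still an $\mathbb{F}_{q'}$-rational point of the relevant smooth surface).

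Plugging this constant fiber count into the displayed sum collapses it into a product:
\[\#F[n](\mathbb{F}_{q'}) = \#F[n-1](\mathbb{F}_{q'}) \cdot \bigl(r(q') + (n-1)q'\bigr),\]
and the induction hypothesis $\#F[n-1](\mathbb{F}_{q'}) = \prod_{i=0}^{n-2}(r(q')+iq')$ closes the argument, yielding $R_n(q') = \prod_{i=0}^{n-1}(r(q')+iq')$. The only genuinely delicate point is the uniformity of the fiber count: one must confirm that every blowup in the tower, including those centered on earlier exceptional divisors, contributes exactly $q'$ new rational points, which follows from the rationality of all centers guaranteed by $x \in F[n-1](\mathbb{F}_{q'})$. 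Everything else is bookkeeping of the inductive product.
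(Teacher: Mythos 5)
Your proposal is correct and follows essentially the same route as the paper: induction on $n$ via the forgetful map $F[n]\to F[n-1]$, identifying the fiber over an $\mathbb{F}_{q'}$-point $(p_1,\dots,p_{n-1})$ with the $(n-1)$-fold blown-up surface, which has $r(q')+(n-1)q'$ rational points since each blowup at a rational center trades one point for a $\P^1$. Your explicit justification that the fiber count is uniform (all centers, including infinitely near ones, being $\mathbb{F}_{q'}$-rational) is a point the paper leaves implicit, but the argument is the same.
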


\begin{proof}
We prove the statement inductively, using the fact that the number of $\mathbb{F}_{q'}$-points of $S_{1,\mathbb{F}_{q'}}$ blown up at $n$ points equals $r(q')+nq'$. When $n=1$, $F[1]_{\mathbb{F}_{q'}}\cong S_{1,\mathbb{F}_{q'}}$ has exactly $r(q')$ points over $\mathbb{F}_{q'}$. For the induction step we need to show that:
\[R_{n+1}(q')=R_n(q')(r(q')+nq').\]

To see this, recall that we have a forgetful map $F[n+1]_{\mathbb{F}_{q'}}\to F[n]_{\mathbb{F}_{q'}}$. Under this map, the fiber of any point $x=(p_1,\dots, p_{n})\in F[n]_{\mathbb{F}_{q'}}$ is isomorphic to the blown up surface $S_{n+1,\mathbb{F}_{q'}}$, so it has $r(q')+nq'$ rational points.\qedhere
\end{proof}

\begin{definition}
Let $X$ be a smooth, irreducible complex algebraic variety. The Poincar\'{e} polynomial of $X$ is:
\[P_X(q)=\sum_{i=1}^{2\dim X}b_iq^i,\]
where $b_i$ is the rank of the $i^{th}$ singular homology group $H^i(X,\Z)$.
\end{definition}

\begin{lemma}\label{betti}
Let $S_1$ be a complex smooth projective rational surface. Let $p$ be a prime integer and $q=p^l$, as in Proposition~\ref{r(t)}. The Poincar\'{e} polynomial of $F[n]=F[S_1,n]$, denoted by $P_n(q)$, coincides with the polynomial $R_n(q)$ which gives the number of $\mathbb{F}_q$-points on the moduli space $F[n]_{\mathbb{F}_q}$.
\end{lemma}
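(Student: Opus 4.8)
The plan is to deduce the identity from the Grothendieck--Lefschetz trace formula together with the fact that the cohomology of $F[n]$ is entirely algebraic and of Tate type. First I would record the two structural inputs already available. By the corollary following Proposition~\ref{fn-hi}, the cycle class map $\A^*(F[n])\to\mathbb{H}^{2*}(F[n])$ is an isomorphism; in particular the singular cohomology of $F[n]$ vanishes in odd degrees, and the rank $b_{2i}$ of $\mathbb{H}^{2i}(F[n])$ equals $\dim_{\Q}\A^i(F[n])$. Thus the odd Betti numbers are zero and the Poincar\'e polynomial $P_n$ is completely governed by the ranks of the Chow groups.

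Next I would pass to the finite-field model $F[n]_{\mathbb{F}_q}$ furnished by Setup~\ref{setup} and Proposition~\ref{r(t)}, and prove the key claim that the $\ell$-adic cohomology of the geometric fiber is of Tate type, i.e. $\Frob$ acts on $H^{2i}_{et}$ by multiplication by $q^i$ and $H^{\mathrm{odd}}_{et}=0$. I would establish this by re-running the induction of Proposition~\ref{fn-hi} in $\ell$-adic cohomology, using the blowup formula
\[
H^*_{et}(\widetilde{Y})\cong H^*_{et}(Y)\oplus\bigoplus_{s=1}^{d-1}H^{*-2s}_{et}(X)(-s),
\]
which shows that the Tate-type property is inherited by the blowup of a smooth center $X$ of codimension $d$ whenever it holds for $Y$ and for $X$. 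The base of the induction is $S_1^n$, which is Tate because a rational surface has an algebraic cell decomposition (its minimal model is $\P^2$ or a Hirzebruch surface, both toric), exactly as in Proposition~\ref{rational-hi}; the intermediate blowup centers appearing in the tower are of the form $F[k]\times S_1^j$ and are carried along by the same strengthened induction. Smooth and proper base change then identifies these $\ell$-adic Betti numbers with the singular Betti numbers $b_{2i}$ of the complex variety $F[n]$.

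With Tate-type purity in hand, the Grothendieck--Lefschetz trace formula gives, for every $a\ge 1$,
\[
\#F[n]_{\mathbb{F}_q}(\mathbb{F}_{q^a})=\sum_{i\ge 0}(-1)^i\operatorname{tr}\!\big(\Frob^a\mid H^i_{et}\big)=\sum_{i\ge 0}b_{2i}\,q^{ai}.
\]
By Proposition~\ref{formula} the left-hand side equals $R_n(q^a)$. Since this equality of integers holds for infinitely many values $q^a$, the two polynomials must agree, so $R_n(T)=\sum_i b_{2i}\,T^i$. Reading off coefficients shows that the $2i$-th Betti number of $F[n]$ is the coefficient of $T^i$ in $R_n$ and that every odd Betti number vanishes, which is precisely the assertion that $P_n$ and $R_n$ record the same data.

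I expect the main obstacle to be the middle step: justifying that $\Frob$ acts by $q^i$ on $H^{2i}_{et}$, equivalently that the entire cohomology is spanned by algebraic classes over the finite field. This upgrades the HI machinery from the Chow-to-Betti comparison over $\C$ to a statement about Frobenius weights over $\mathbb{F}_q$, and the cleanest route is the $\ell$-adic blowup formula above. Its use forces the induction to be set up so that each blowup center $F[k]\times S_1^j$ is itself known to be Tate, exactly as Proposition~\ref{fn-hi} carries the extra factor $S_1^j$ through the argument; getting this bookkeeping right, rather than any single cohomological computation, is where the real care is needed.
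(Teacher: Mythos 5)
Your argument is correct, but it reaches the crucial intermediate fact --- that the odd cohomology of $F[n]_{\overline{\mathbb{F}}_q}$ vanishes and Frobenius acts on $H^{2i}$ by $q^i$ --- by a genuinely different route than the paper. The paper takes the point count $R_n(q^a)=\prod_{i=0}^{n-1}(r(q^a)+iq^a)$ as the input and combines the Grothendieck--Lefschetz trace formula with Deligne's purity theorem ($|z_{i,j}|=q^{i/2}$ for the eigenvalues of Frobenius on $H^i$): since the count is a polynomial in $q^a$ for every $a$, linear independence of the functions $a\mapsto z^a$ forces every surviving eigenvalue to be an integer power of $q$, which kills the odd cohomology and pins down $z_{2i,j}=q^i$; the identification $R_n=P_n$ then drops out. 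You instead establish the Tate-type property geometrically, by re-running the induction of Propositions~\ref{fn-hi} and~\ref{rational-hi} in $\ell$-adic cohomology using the blowup decomposition $H^*(\widetilde{Y})\cong H^*(Y)\oplus\bigoplus_{s}H^{*-2s}(X)(-s)$, and only afterwards read off the point count and compare it with $R_n$ at infinitely many values $q^a$. What your route buys is independence from the Riemann hypothesis over finite fields: the \'etale blowup formula plus smooth and proper base change suffice, and the argument makes transparent \emph{why} the coefficients of $R_n$ are the even Betti numbers. The cost is the bookkeeping you flag yourself: carrying the factors $S_1^{j}$ through the strengthened induction, and knowing (via the spreading-out of Proposition~\ref{r(t)}) that the mod-$p$ model of $S_1$ is still a tower of blowups of a toric surface at rational points, so that the base case is of Tate type. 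Both proofs are complete; the paper's is shorter granted Deligne's theorem, while yours is more self-contained and reverses the logical direction, deducing the shape of the point count from the cohomology rather than the other way around.
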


\begin{proof}
Let $X=F[n]=F[S_1,n]$ be the moduli variety corresponding to $S_1$ over $\Spec \C$. We can regard $S_1$ as a rational surface $S_{1,\mathbb{F}_q}$ over $\mathbb{F}_q$, as in Proposition~\ref{r(t)} above. Let $X_{\mathbb{F}_q}$ be the moduli space associated to $S_{1,\mathbb{F}_q}$. Since $X$ is smooth and projective, the Betti numbers corresponding to the $l$-adic cohomology (where $l\neq 0 \mod p$) are independent of $l$, and coincide with the Betti numbers corresponding to the ordinary (integral) cohomology of the topological space $X$ (see~\cite{milne}):
\[b_i = \text{rk } H^i(X,\Z)=\text{rk } H^i(X,\Q)= \text{rk } H^i_{\acute{e}t}(X_{\ov{\mathbb{F}}_p},\mathbb{Q}_l).\]

One the other hand, recall the Grothendieck-Lefschetz Trace Formula (see~\cite{milne}, Thm. 13.4, p. 292), which states the following:
\[\# X(\mathbb{F}_q)=\sum_{i=0}^{2n}(-1)^i\tra(\Frob_q|H_c^i(X_{\ov{\mathbb{F}}_q},\mathbb{Q}_l)).\]

Since $X$ is proper, $H_c^i(X_{\ov{\mathbb{F}}_q},\mathbb{Q}_l)=H^i(X_{\ov{\mathbb{F}}_q},\mathbb{Q}_l)$. Moreover, as a consequence of the Weil conjectures (see~\cite{deligne-weil}), we have:
\[\tra(\Frob_q|H^i(X_{\ov{\mathbb{F}}_q},\mathbb{Q}_l))=z_{i,1}+\dots+z_{i,b_i},\]
where $z_{i,1},\dots,z_{i,b_i}$ are the eigenvalues of the Frobenius map, and they satisfy $|z_{i,j}|=q^{i/2}$, for all $j$. Now, if we replace the field $\mathbb{F}_q$ by $\mathbb{F}_{q'}$, where $q'=q^a$, then:
\[\tra(\Frob_{q^a}|H^i(X_{\ov{\mathbb{F}}_{q^a}},\mathbb{Q}_l))=z_{i,1}^a+\dots+z_{i,b_i}^a.\]

In conclusion, for all $a\geq 1$, $R_n(q^a)=\sum_{i=1}^{2n}(-1)^i\sum_{j=0}^{b_j}z_{i,j}^{a}$, where $|z_{i,j}|=q^{i/2}, \forall i,j$. It follows immediately that $H^{2i+1}(X_{\ov{\mathbb{F}}_{q^a}},\mathbb{Q}_l)=0$, for all $i$ and $l\neq 0 \mod p$, and $z_{2i,j}=q^{i}$, for all $i, j$. With this, we conclude our statement:
\[R_n(q)=P_n(q)=\sum_{i=1}^{n} b_{2i}q^i.\]\qedhere
\end{proof}

Let $r(q)=q^2+kq+1$ be the Poincar\'{e} polynomial of $S_1$. This means that $\A^*(S_1,\mathbb{Z})$ is generated in degree 1 by $k$ classes $d_1,\dots, d_k$, and by one class in degree 2.

\begin{theorem}
Let $S_1$ be a complex smooth projective rational surface. Let $\Pi:F[n]\to S_1^n$ be the natural projection map and $\pr_i:S_1^n\to S_1$ the projection onto the $i$-th copy, $\forall 1\leq i\leq n$. Let $\pr_i^*\circ \Pi^*:\A^*(S_1)\to \A(F[n])$ be the induced map on Chow rings and $d_{i,1},\dots, d_{i,k}$ the images of the classes $d_1,\dots, d_k$, respectively. The Chow ring of the moduli space $\A^*(F[n])$ is:
%$$\bold{A}^*(F[n])\cong \frac{\mathbb{Z}[d_{l,m}, D_{i,j}^{(n)}]_{1\leq m\leq n, 1\leq l\leq k, 1\leq i<j\leq n}}{\langle D_{j,k}^{(n)}(D_{i,j}^{(n)}-D_{i,k}^{(n)}), D_{j,k}^{(n)}(d_{i,j}-d_{i,k}), P_{i,j}(-D_{i,j}^{(n)})\rangle}.$$
\[\A^*(F[n])\cong \frac{(\A^*(S_1))^{\otimes n}[D_{i,j}^{(n)}]_{1\leq i<j\leq n}}{\langle D_{j,k}^{(n)}(D_{i,j}^{(n)}-D_{i,k}^{(n)}), D_{j,k}^{(n)}(d_{i,j}-d_{i,k}), P_{i,j}(-D_{i,j}^{(n)})\rangle}.\]
\end{theorem}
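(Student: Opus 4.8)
The plan is to read the stated presentation as a quotient of the one already produced in Theorem~\ref{chow} and then to prove that quotient map is an isomorphism by a dimension count driven by the Betti numbers of Lemma~\ref{betti}. Write $A$ for the ring on the right-hand side of the statement and $B=\A^*(F[n])$. First I would identify $(\A^*(S_1))^{\otimes n}$ with $\A^*(S_1^n)$ via the K\"unneth isomorphism, which is available because $S_1^n$ is an HI variety (Proposition~\ref{rational-hi}), so that $A$ and $B$ have literally the same generators $\A^*(S_1^n)$ and $\{D_{i,j}^{(n)}\}_{i<j}$. The three families of relations defining $A$ all hold in $B$: the quadratic relations $P_{i,j}(-D_{i,j}^{(n)})$ are part of the presentation in Theorem~\ref{chow}, while $D_{j,k}^{(n)}(D_{i,j}^{(n)}-D_{i,k}^{(n)})$ and $D_{j,k}^{(n)}(d_{i,j}-d_{i,k})$ are precisely relations (ii) and (i) of Proposition~\ref{relations}. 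Hence there is a natural graded surjection $\phi\colon A\twoheadrightarrow B$, and it remains only to prove $\phi$ is injective.

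For injectivity I would bound the graded dimensions of $A$ from above and match them against those of $B$. The dimension of $B$ is known exactly: by Proposition~\ref{fn-hi} and Proposition~\ref{rational-hi} the canonical map $\A^*(F[n])\to\mathbb{H}^{2*}(F[n])$ is an isomorphism, so $\dim_\Q B_d=b_{2d}$, and by Lemma~\ref{betti} the Poincar\'e polynomial equals $R_n(q)=\prod_{i=0}^{n-1}(r(q)+iq)$; thus $\dim_\Q B_d$ is the coefficient of $q^d$ in $R_n(q)$ (these cohomology groups being torsion-free, one may equally work over $\Z$). The heart of the argument is therefore to exhibit an explicit spanning set of $A$ whose graded generating function is at most $R_n(q)$. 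Since a surjection forces $\dim_\Q A_d\ge\dim_\Q B_d$, matching this upper bound with $\dim_\Q B_d$ gives equality in every degree, hence that $\phi$ is an isomorphism.

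To produce the spanning set I would put monomials into a normal form using only the three relation families. The quadratic relations $P_{i,j}$ let me assume each $D_{i,j}^{(n)}$ occurs to the first power. Relation (ii), in the form $D_{b,k}^{(n)}D_{a,k}^{(n)}=D_{b,k}^{(n)}D_{a,b}^{(n)}$ for $a<b<k$, rewrites any product containing two diagonal classes with the same top index into one with strictly more distinct top indices; a standard Gr\"obner-type termination argument then reduces every monomial to one whose surviving factors $D_{i,j}^{(n)}$ have distinct top indices $j$. Relation (i), in the form $D_{i,j}^{(n)}d_{j,\ell}=D_{i,j}^{(n)}d_{i,\ell}$, lets me push any surface class pulled back from the $j$-th copy down to the $i$-th copy whenever a factor $D_{i,j}^{(n)}$ is present; because $\A^*(S_1)$ is generated in degree one (the point class being a product of divisor classes, so relation (i) propagates to all degrees), this removes all copy-$j$ surface classes at each level carrying a diagonal. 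The upshot is that $A$ is spanned by the monomials that choose, for each level $j\in\{1,\dots,n\}$, either a basis class of $\A^*(S_1)$ from the $j$-th copy or a single factor $D_{i,j}^{(n)}$ with $i<j$; the generating function of this set is $\prod_{j=1}^{n}\bigl(r(q)+(j-1)q\bigr)=R_n(q)$, matching the Keel module decomposition $\A^*(\si_{n,k+1})\cong\A^*(\si_{n,k})\oplus\A^*(F[n-1])\cdot D_{k,n}^{(n)}$ of Corollary~\ref{chow2} level by level.

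The main obstacle is exactly this combinatorial step: showing that the three displayed relation families already suffice to reduce an arbitrary monomial to the standard form, i.e. that they generate the full ideal $\langle D_{i,j}^{(n)}\cdot\ker(\tr_{i,j}^{*})\rangle$ of Theorem~\ref{chow} modulo the quadratic relations. I would not attempt to compute the kernels $\ker(\tr_{i,j}^{*})$ directly; rather, the Betti-number input is what certifies that no relations beyond (i) and (ii) are needed. Concretely, the rewriting yields $\dim_\Q A_d\le [q^d]R_n(q)=\dim_\Q B_d$, which together with the surjection $\phi$ forces $A\cong B$. The one point needing real care is the termination and independence of the rewriting, namely that a distinct-top-index monomial with its surface coefficients pushed to minimal copies is not further reducible and is counted exactly once; this is precisely where the exact value $R_n(q)$ of the dimension is invoked to close the argument.
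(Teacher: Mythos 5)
Your proposal follows essentially the same route as the paper: establish the surjection from the candidate presentation using Theorem~\ref{chow} and Proposition~\ref{relations}, then force injectivity by matching graded dimensions against the Betti numbers computed from the point count $R_n(q)=\prod_{i=0}^{n-1}(r(q)+iq)$ via Lemma~\ref{betti} and the HI property. Your normal-form/rewriting analysis of the spanning set is in fact spelled out more carefully than the paper's corresponding step, which the paper dispatches with a brief recursive interpretation of $b_{n+1,j}=b_{n,j}+(n+k)b_{n,j-1}+b_{n,j-2}$.
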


\begin{proof}
Let $R=\frac{(\A^*(S_1))^{\otimes n}[D_{i,j}^{(n)}]_{1\leq i<j\leq n}}{\langle D_{j,k}^{(n)}(D_{i,j}^{(n)}-D_{i,k}^{(n)}), D_{j,k}^{(n)}(d_{i,j}-d_{i,k}), P_{i,j}(-D_{i,j}^{(n)})\rangle}$. We claim the following composition of morphisms is an isomorphism, after tensoring by $\Q$:
\[
R\twoheadrightarrow \A^*(F[n],\Z)\xrightarrow{\cong} \mathbb{H}^*(F[n],\Z)\twoheadrightarrow \mathbb{H}^*(F[n],\Q).
\]

By Theorem~\ref{chow}, we know that $\A^*(F[n])$ is generated over $\A^*(S_1^n)$ by the classes of the divisors $\{D_{i,j}^{(n)}\}_{1\leq i<j\leq n}$. Moreover, by Proposition~\ref{relations}, we know that the following relations hold in the Chow ring $\A^*(F[n])$:
\begin{align}\label{rels}
D_{j,k}^{(n)}(d_{i,j}-d_{i,k})&=0\nonumber\\
 D_{j,k}^{(n)}(D_{i,j}^{(n)}-D_{i,k}^{(n)})&=0\\
 P_{i,j}(-D_{i,j}^{(n)})&=0.\nonumber
\end{align}

We show that the relations above are sufficient by looking at the Betti numbers of the moduli space $F[n]$. By definition, the $j$-th Betti number of $F[n]$, denoted by $b_{n,j}$, represents the number of codimension $j$ linearly independent generators of $\A^*(F[n])$ as a $\mathbb{Z}$-module. We obtain the following recursive relation from Proposition~\ref{formula}:
\[b_{n+1,j}=b_{n,j}+(n+k)b_{n,j-1}+b_{n,j-2}.\]

We give the following interpretation to the relation above: consider the map on Chow rings $\Pi_{n+1}^*:\A^*(F[n])\to \A^*(F[n+1])$ corresponding to the forgetful functor. Compared to the moduli space $F[n]$, the space $F[n+1]$ has $n+k$ extra divisors: $d_{n+1,1},\dots, d_{n+1,k}, D_{1,n+1}^{(n+1)},\dots D_{n,n+1}^{(n+1)}$. A generator in $\A^j(F[n+1])$ is either a class inherited from $\A^j(F[n])$ under the map $\Pi_{n+1}^*$ (this accounts for $b_{n,j}$ generators), or it is a product between a generator class coming from $\A^{j-1}(F[n])$ and one of the $n+k$ new divisor classes (this accounts for $(n+k)b_{n,j-1}$ generators), or it is a product between a generator class coming from $\A^{j-2}(F[n])$ and the one generator class coming from $\A^2(S_1)$ under the projection map $\pr_{n+1}:S_1^{n+1}\to S_1$ (this accounts for $b_{n,j-2}$ generators). It is easy to see that these are the only generators, since the divisors $D_{i,j}^{(n)}$ satisfy the identities of Equation~\ref{rels}.
\end{proof}

\begin{corollary}\label{pp2}
When $S_1=\mathbb{P}^2_{\C}$, the Chow ring of the moduli space $\A^*(F[\mathbb{P}^2,n])$ is:
\[\A^*(F[\mathbb{P}^2,n])\cong \frac{\mathbb{Z}[H_i^*, D_{j,k}^{(n)}]_{1\leq i\leq n, 1\leq j<k\leq n}}{\langle D_{j,k}^{(n)}(D_{i,j}^{(n)}-D_{i,k}^{(n)}), D_{j,k}^{(n)}(H_j^*-H_k^*), H_i^{*3}, P_{i,j}(-D_{i,j}^{(n)})\rangle},\]
where, $\forall 1\leq i\leq n$, $H_i^*$ is the image of the hyperplane class $H\in \A^*(\mathbb{P}^2)$ under the composition $F[\mathbb{P}^2,n]\to (\mathbb{P}^2)^n\xrightarrow{\pr_i} \mathbb{P}^2$. 
\end{corollary}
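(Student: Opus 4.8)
The plan is to obtain this presentation as a direct specialization of the preceding theorem to $S_1 = \P^2_\C$. Since $\P^2$ is a minimal rational surface it satisfies that theorem's hypotheses, so the only task is to make its general data explicit; in particular the hard content — that the listed relations are complete — is already supplied there, via the Betti-number recursion furnished by Lemma~\ref{betti} and Proposition~\ref{formula}.

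First I would record that $\A^*(\P^2) \cong \Z[H]/(H^3)$, with $H$ the hyperplane class. Its Poincar\'e polynomial is $r(q) = q^2 + q + 1$, so in the notation introduced before the theorem $k = 1$: there is exactly one degree-one generator $d_1 = H$ (and one degree-two generator). Because $\P^2$ has an affine cell decomposition its Chow groups are free, so the exterior product yields a ring isomorphism
\[(\A^*(\P^2))^{\otimes n} \cong \Z[H_1, \dots, H_n]/(H_1^3, \dots, H_n^3),\]
in which $H_i$ is the hyperplane class of the $i$-th factor of $(\P^2)^n$; under $F[n] \to (\P^2)^n \xrightarrow{\pr_i} \P^2$ this class becomes precisely the generator $H_i^*$ of the corollary.

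With these identifications I would substitute termwise into the formula of the preceding theorem. The coefficient ring $(\A^*(\P^2))^{\otimes n}$ contributes the generators $H_1^*, \dots, H_n^*$ together with the relations $H_i^{*3} = 0$; adjoining the $D_{j,k}^{(n)}$ produces $\Z[H_i^*, D_{j,k}^{(n)}]$. Among the relation families of the theorem, $D_{j,k}^{(n)}(D_{i,j}^{(n)} - D_{i,k}^{(n)})$ and $P_{i,j}(-D_{i,j}^{(n)})$ carry over verbatim, whereas the family recording that each degree-one class of $S_1$ restricts compatibly to the diagonal $D_{j,k}^{(n)}$ (Proposition~\ref{relations}(i)) collapses, there being now the single class $H$, to $D_{j,k}^{(n)}(H_j^* - H_k^*)$. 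Assembling these quotient relations together with the $H_i^{*3}$ reproduces the asserted presentation exactly.

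The argument is essentially bookkeeping, so I anticipate no substantive obstacle. The two points meriting care are purely notational: matching the generator-index/factor-index conventions between the general statement and its $\P^2$ specialization (where the unique generator trivializes one of the indices), and confirming that $(\Z[H]/(H^3))^{\otimes n} \cong \Z[H_1,\dots,H_n]/(H_1^3,\dots,H_n^3)$ contributes no relations beyond the $H_i^{*3}$ — the standard tensor-product computation for truncated polynomial algebras.
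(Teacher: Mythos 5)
Your proposal is correct and matches the paper's (implicit) reasoning: the corollary is stated as an immediate specialization of the preceding theorem to $S_1=\P^2_{\C}$, where $\A^*(\P^2)\cong \Z[H]/(H^3)$ gives $k=1$, the tensor power contributes the generators $H_i^*$ with relations $H_i^{*3}$, and the single degree-one class collapses the relation family $D_{j,k}^{(n)}(d_{i,j}-d_{i,k})$ to $D_{j,k}^{(n)}(H_j^*-H_k^*)$. The bookkeeping you describe is exactly what is needed, and no substantive gap remains.
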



\begin{thebibliography}{7}
%\addcontentsline{toc}{chapter}{Bibliography}

\bibitem{barth-peters}
W. Barth, K. Hulek, C. Peters, and A. van de Ven,
\textit{Compact complex surfaces},
Springer-Verlag Berlin Heidelberg, 1984.

%\bibitem{Beauville}
%A. Beauville,
%\textit{Complex Algebraic Surfaces}, 
%London Mathematical Society Student Texts. Cambridge University Press, 2nd edition, 1996.

%\bibitem{deligne-coh}
%P. Deligne, 
%\textit{Cohomologie \'{e}tale},
%Lecture Notes in Mathematics, no. 569, Springer-Verlag, 1977.

\bibitem{deligne-weil}
P. Deligne, 
\textit{La conjecture de Weil, I},
Publications Math\'{e}matiques de l'IH\'{E}S \textbf{43} (1974), 273--307.



\bibitem{fulton}
W. Fulton,
\textit{Intersection Theory},
Springer-Verlag New York, 1998.

%\bibitem{hartshorne}
%R. Hartshorne,
%\textit{Algebraic Geometry},
%Springer-Verlag, New York, 1977.

\bibitem{keel}
S. Keel,
\textit{Intersection theory of the moduli space of stable $n$-pointed curves of genus zero},
Trans. Amer. Math. Soc. \textbf{330} (1992), 545-574.

%\bibitem{milneAB}
%J. S. Milne, 
%\textit{Abelian Varieties},
%in \textit{Arithmetic geometry (Storss, Conn., 1984}, 103–150, Springer-Verlag, New York, 1986.

\bibitem{milne}
J. S. Milne,
\textit{\'{E}tale cohomology},
Princeton Math. Studies \textbf{33} (1980), Princeton University Press.

\bibitem{stacks}
The Stacks project authors,
\textit{The Stacks project},
\url{https://stacks.math.columbia.edu}, 2020.
\end{thebibliography}
\end{document}